\documentclass[11pt,reqno]{amsart}

\usepackage{marginnote,amssymb,mathrsfs,latexsym,verbatim,pdfsync,color,mathabx}


\textwidth16.5cm
\textheight21cm
\evensidemargin.2cm
\oddsidemargin.2cm

\addtolength{\headheight}{3.2pt}    

\newcommand{\bbC}{{\mathbb C}}

\newcommand{\bbN}{{\mathbb N}}
\newcommand{\bbR}{{\mathbb R}}

\newcommand{\bbZ}{{\mathbb Z}}

\def\bP{{\mathbf P}}

\def\cB{{\mathcal B}}
\def\cC{{\mathcal C}}
\def\cD{{\mathcal D}}
\def\cE{{\mathcal E}}

\def\cF{{\mathcal F}}

\def\cI{{\mathcal I}}

\def\cP{{\mathcal P}}

\def\cS{{\mathcal S}}
\def\cT{{\mathcal T}}

\def\Im{\operatorname{Im}}

\def\la{\langle}
\def\ra{\rangle}

\def\eps{\varepsilon}
\def\vp{\varphi}

\def\ms{\medskip}

\def\endpf{\medskip\hfill $\Box$

\ms

}  
\def\epf{\endpf}

\def\Hol{\operatorname{Hol}}

\def\supp{\operatorname{supp}}

\def\sinc{\operatorname{sinc}}

\def\Bsp{\cB^{s,p}_a}
\def\Lps{\dot{L}^p_s}

\def\BMO{\operatorname{BMO}}

\def\vp{\varphi}

\newtheorem{thm}{Theorem}[section]
\newtheorem{prop}[thm]{Proposition}
\newtheorem{cor}[thm]{Corollary}
\newtheorem{lem}[thm]{Lemma}
\newtheorem{defn}[thm]{Definition}
\newtheorem{remark}[thm]{Remark}

\newtheorem{theorem}{Theorem}
\newtheorem*{Theorem}{Theorem}

\begin{document}

\title[Fractional Bernstein spaces]{Fractional Paley--Wiener and Bernstein spaces} 
\author[A. Monguzzi, M. M. Peloso, M. Salvatori]{Alessandro Monguzzi*, 
Marco M. Peloso**, Maura Salvatori**}

 \address{*Dipartimento di Matematica e Applicazioni, Universit\`a
   degli Studi di Milano--Bicocca, Via R. Cozzi 55, 20126 Milano,
   Italy}   
\email{{\tt alessandro.monguzzi@unimib.it}}
\address{**Dipartimento di Matematica, Universit\`a degli Studi di
  Milano, Via C. Saldini 50, 20133 Milano, Italy}
\email{{\tt marco.peloso@unimi.it}}
\email{{\tt maura.salvatori@unimi.it}}

\keywords{Paley--Wiener spaces, Bernstein spaces, fractional
  Laplacian, homogeneous Sobolev spaces.}
\thanks{{\em Math Subject Classification} 30D15, 26A33 }
\thanks{Authors were partially supported by the 2015 PRIN grant
  \emph{Real and Complex Manifolds: Geometry, Topology and Harmonic analysis}  
  of the Italian Ministry of Education (MIUR), and are members of the
  Gruppo Nazionale per l'Analisi Matematica, la Probabilit\`a e le
  loro 
Applicazioni (GNAMPA) of the Istituto Nazionale di Alta Matematica (INdAM) }

\begin{abstract}
We introduce and study a family of spaces of entire functions in one
variable that generalise 
the classical Paley--Wiener and Bernstein spaces. Namely, we consider
entire functions of exponential type $a$ whose restriction to the real
line belongs to the homogeneous Sobolev space $\dot{W}^{s,p}$  and we
call these spaces fractional Paley--Wiener if $p=2$ and fractional Bernstein
spaces if $p\in(1,\infty)$, that we denote by $PW^s_a$ and $\Bsp$,
respectively. For these spaces we provide a Paley--Wiener type
characterization, we remark some facts about the sampling problem in
the Hilbert setting and prove generalizations of the classical 
Bernstein and Plancherel--P\'olya inequalities.  We conclude by
discussing a number 
of open questions.
\end{abstract}
\maketitle
\begin{center}
{\emph{Dedicated to the memory of Elias M.\ Stein}}
\end{center}

\section{Introduction and statement of the main results}

A renowned theorem due to R. Paley and N. Wiener \cite{PW} characterizes the
entire functions of exponential type $a>0$ whose restriction to the real
line is square-integrable in terms of the support of the Fourier
transform of their restriction to the real line. An analogous
characterization holds for entire functions of exponential type $a$
whose restriction to to the real line belongs to some $L^p$ space,
$p\neq 2$  \cite{Bernstein}.
To be precise, let $\cE_a$
be the space of entire functions of exponential type $a$, 
\begin{equation}\label{exp-type}
\cE_a = \left\{ f\in\Hol(\bbC): \, \text{for every } \eps>0\
\text{there exists\ } C_\eps>0  \text{ such that \ } |f(z)|\le C_\eps
e^{(a+\eps)|z|} \right\} \,.
\end{equation}
Then, for any $p\in(1,\infty)$, the Bernstein space $\cB^p_a$ is defined as
$$
\cB^p_{a}=\left\{ f\in \cE_a: f_0\in L^p, \, \|f\|_{\cB^p_a} = \|f_0\|_{L^p} \right\}
$$
where $f_0:=f|_{\bbR}$ denotes the restriction of $f$ to the real line
and $L^p$ is the standard Lebesgue space.
In the Hilbert setting $p=2$, the Bernstein space $\cB^2_a$ is more
commonly known as the Paley--Wiener space and we will denote it by $PW_a$
in place of $\cB^2_a$.

Let 
$\cS$  and
$\cS'$ denote the space of Schwartz functions and
the space of tempered
distributions, resp.
For $f\in\cS$ we
 equivalently denote by $\widehat f$ or $\cF f$ the Fourier
 transform given by
 \begin{equation*}
   \widehat f(\xi)
   =  \frac{1}{\sqrt{2\pi}} \int_{\bbR} f (x)e^{-ix\xi}\, dx. 
\end{equation*} 
The Fourier transform $\cF$ is an isomorphism of $\cS$
onto itself with inverse given by
$$
\cF^{-1}f(x) =
\frac{1}{\sqrt{2\pi}}\int_{\bbR} \widehat f(\xi)e^{ix\xi}\, d\xi. 
$$
By Plancherel Theorem, the operator
$\cF$ extends to a surjective
isometry $\cF: L^2(\bbR)\to L^2(\bbR)$. 
\smallskip

We now recall the  classical Paley-Wiener characterization 
of the space $PW_a$.

\begin{Theorem}[\cite{PW}]
 Let $f\in PW_a$, then $\supp\widehat f_0\subseteq [-a,a]$, 
 \begin{equation*}
  f(z)=\frac{1}{\sqrt{2\pi}}\int_{-a}^a \widehat{f_0}(\xi)e^{iz\xi}\,d\xi
 \end{equation*}
 and $\|f\|_{PW_a}=\|\widehat{f_0}\|_{L^2([-a,a])}$. 
Conversely, if $g\in L^2([-a,a])$ and we define 
\begin{equation*}
f(z)=\frac{1}{\sqrt{2\pi}}\int_{-a}^ag(\xi)e^{iz\xi}\, d\xi \,,
\end{equation*}
then $f\in PW_a$, $\widehat{f_0}=g$ and $\|f\|_{PW_a}=\|g\|_{L^2([-a,a])}$.
\end{Theorem}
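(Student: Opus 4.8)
The statement has an easy direction --- given $g\in L^2([-a,a])$, exhibiting the asserted $f$ in $PW_a$ --- and a harder one --- given $f\in PW_a$, identifying $\widehat{f_0}$ and the integral representation; I would settle the easy one first. Given $g\in L^2([-a,a])\subseteq L^1([-a,a])$, set $f(z)=\frac{1}{\sqrt{2\pi}}\int_{-a}^{a}g(\xi)e^{iz\xi}\,d\xi$. On each compact subset of $\bbC$ the integrand and all its $z$-derivatives are dominated by $|\xi|^{k}|g(\xi)|e^{a|\Im z|}$, integrable over $[-a,a]$, so differentiation under the integral sign is legitimate and $f\in\Hol(\bbC)$; moreover $|f(z)|\le\frac{1}{\sqrt{2\pi}}e^{a|\Im z|}\|g\|_{L^{1}([-a,a])}\le\frac{\sqrt{2a}}{\sqrt{2\pi}}\|g\|_{L^{2}([-a,a])}e^{a|z|}$, so $f\in\cE_a$. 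On the real line $f_0=\cF^{-1}(g\,\chi_{[-a,a]})$, whence by Plancherel's theorem $f_0\in L^2$, $\widehat{f_0}=g\,\chi_{[-a,a]}$, and $\|f\|_{PW_a}=\|f_0\|_{L^2}=\|g\|_{L^{2}([-a,a])}$; so $f\in PW_a$ with all the stated properties.

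For the harder direction let $f\in PW_a$, so $f_0\in L^2$ and $\widehat{f_0}\in L^2$ is defined; everything reduces to showing $\supp\widehat{f_0}\subseteq[-a,a]$. I would prove, for each $\eps>0$, that $z\mapsto e^{i(a+\eps)z}f(z)$ restricts to the upper half-plane $\bbH=\{\Im z>0\}$ as an element of the Hardy space $H^2(\bbH)$, and symmetrically that $z\mapsto e^{-i(a+\eps)z}f(z)$ restricts to $\{\Im z<0\}$ as an element of $H^2$ of the lower half-plane. Since, with the normalization of $\cF$ above, membership in $H^2(\bbH)$ is equivalent to the Fourier transform of the boundary trace being supported in $[0,\infty)$, and since $\cF(e^{i(a+\eps)x}f_0)(\xi)=\widehat{f_0}(\xi-(a+\eps))$, the first statement gives $\supp\widehat{f_0}\subseteq[-(a+\eps),\infty)$ and the second gives $\supp\widehat{f_0}\subseteq(-\infty,a+\eps]$; letting $\eps\downarrow0$ yields $\supp\widehat{f_0}\subseteq[-a,a]$. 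Checking $e^{i(a+\eps)z}f\in H^2(\bbH)$ amounts to bounding $\sup_{y>0}e^{-2(a+\eps)y}\int_{\bbR}|f(x+iy)|^{2}\,dx$, hence to the estimate $\int_{\bbR}|f(x+iy)|^{2}\,dx\le C_{\eps}e^{2(a+\eps)|y|}$ for all $y\in\bbR$ and every $\eps>0$.

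This horizontal $L^2$-estimate is the crux, and the step I expect to be the main obstacle: it upgrades the a priori bound $|f(z)|\le C_\eps e^{(a+\eps)|z|}$, which permits exponential growth even along $\bbR$, into genuine control of $f$ off the real axis out of the lone hypothesis $\int_{\bbR}|f(x)|^{2}\,dx<\infty$ --- a Phragm\'en--Lindel\"of phenomenon, essentially the Plancherel--P\'olya inequality in the case $p=2$. I would establish it by first reducing, via the mollification $f_\delta=f*K_\delta$ --- which stays in $\cE_a$, has a bounded $L^2$ trace, and converges to $f$ locally uniformly as $\delta\to0$ --- to the case where $f_0$ is bounded as well, then running Phragm\'en--Lindel\"of on the four quadrants to get $|f(z)|\le C_\eps e^{(a+\eps)|\Im z|}$, controlling the horizontal $L^2$-norms through the subharmonicity of $|f|^{2}$, and finally passing to the limit with Fatou's lemma. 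Once the support statement is secured, the rest is routine: $\widehat{f_0}\in L^2$ supported in the compact interval $[-a,a]$ lies in $L^1$, so Fourier inversion gives $f_0(x)=\frac{1}{\sqrt{2\pi}}\int_{-a}^{a}\widehat{f_0}(\xi)e^{ix\xi}\,d\xi$ for almost every $x$; by the easy direction the right-hand side is an entire function of $\cE_a$ coinciding with $f$ almost everywhere on $\bbR$, hence --- both being continuous --- everywhere; and $\|f\|_{PW_a}=\|f_0\|_{L^2}=\|\widehat{f_0}\|_{L^{2}([-a,a])}$ is once again Plancherel's theorem.
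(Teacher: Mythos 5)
The paper does not prove this theorem; it is quoted as a classical result with a citation to \cite{PW}, so there is no internal proof to compare against. Your argument is correct and follows the standard modern route: derive the support statement from the Paley--Wiener theorem for $H^2$ of a half-plane, the content being the Plancherel--P\'olya estimate $\int_{\bbR}|f(x+iy)|^2\,dx\le C_\eps e^{2(a+\eps)|y|}$, which you obtain from an $\cE_a$-plus-$L^2$ hypothesis by mollification, Phragm\'en--Lindel\"of on the four quadrants applied to $e^{\pm i(a+\eps)z}f(z)$, and then the Poisson-integral/subharmonicity bound $\|F(\cdot+iy)\|_{L^2}\le\|F_0\|_{L^2}$ for a bounded holomorphic $F$ on a half-plane with $L^2$ boundary trace, followed by Fatou to remove the mollification. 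The easy direction is carried out correctly in full. Two implicit inputs are worth flagging, though both are legitimate classical facts rather than gaps: (i) the $H^2(\bbH)$ Paley--Wiener theorem (that $f\in H^2(\bbH)$ iff $\widehat{f_0}$ is supported in $[0,\infty)$, with your Fourier normalization), and (ii) the Poisson representation of $H^\infty$ functions on a half-plane, which is what turns the pointwise Phragm\'en--Lindel\"of bound into the horizontal $L^2$ bound. With those acknowledged, the proof is complete.
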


In particular, the Fourier transform $\cF$ induces a surjective
isometry between the spaces $L^2([-a,a])$ 
and $PW_a$.  We shall write $L^2_a$ instead of $L^2([-a,a])$ for short.

A similar characterization holds true for the Bernstein spaces
$\cB^{p}_a$, $1< p< +\infty$. We refer the reader, for instance, to
\cite[Theorem 4]{Andersen}.  We shall denote by $\bbN_0$ the set of
nonnegative integers.

\begin{Theorem}[Characterizations of $\cB^p_a$] Let $1<p<\infty$.
Then, the following conditions on a function $h$ defined on the real
line are equivalent.
\begin{itemize}
\item[(i)] The function $h$ is the restriction of an
entire function $f\in \cB^p_a$ to the real line, that is, $h=f_0$;
\item[(ii)] $h\in
 L^p(\bbR)$ and $\supp\widehat h \subseteq [-a,a]$;
\item[(iii)]
$h\in C^\infty$,  $h^{(n)}\in L^p$ for all
$n\in\bbN_0$ and $\|h^{(n)}\|_{L^p} \le a^n \|h\|_{L^p} $.
\end{itemize}
\end{Theorem}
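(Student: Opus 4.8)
The plan is to prove the cycle of implications (i)$\,\Rightarrow\,$(ii)$\,\Rightarrow\,$(iii)$\,\Rightarrow\,$(i). I would start with (iii)$\,\Rightarrow\,$(i), the most elementary step. From the one-dimensional estimate $\|g\|_{\infty}\le p^{1/p}\,\|g\|_{L^p}^{1-1/p}\,\|g'\|_{L^p}^{1/p}$, valid for absolutely continuous $g$ with $g,g'\in L^{p}(\bbR)$ and obtained by integrating $(|g|^p)'$ and applying H\"older's inequality, applied to $g=h^{(n)}$ together with the hypothesis $\|h^{(n)}\|_{L^p}\le a^{n}\|h\|_{L^p}$, one gets $\|h^{(n)}\|_{\infty}\le p^{1/p}a^{1/p}\|h\|_{L^p}\cdot a^{n}$, a bound geometric in $n$. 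Hence for every real $x_{0}$ the series $\sum_{n}\frac{h^{(n)}(x_{0})}{n!}(z-x_{0})^{n}$ has infinite radius of convergence; Taylor's formula with remainder shows that on $\bbR$ it represents $h$, so the local power series patch together into an entire function $f$ with $f_{0}=h$, and the same bound yields $|f(z)|\le p^{1/p}a^{1/p}\|h\|_{L^p}\,e^{a|\Im z|}$. Thus $f\in\cE_{a}$ and $h=f_{0}$ with $f\in\cB^{p}_{a}$.

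For (i)$\,\Rightarrow\,$(ii) the one genuinely delicate preliminary is that $f_{0}\in L^{\infty}(\bbR)$, a Nikol'ski\u{\i}-type estimate, which I would obtain in two moves. \emph{(a)} If $F\in\cE_{b}$ and $F_{0}\in L^{1}(\bbR)$, then $F_{0}$ is bounded: its horizontal antiderivative $G(z)=\int_{-\infty}^{z}F$ is entire of exponential type $b$ with $\|G_{0}\|_{\infty}\le\|F_{0}\|_{L^{1}}$, so the classical Bernstein estimate gives $|G(x+iy)|\le\|G_{0}\|_{\infty}e^{b|y|}$, and a Cauchy estimate on the circle of radius $1/b$ converts this into $\|F_{0}\|_{\infty}\le e\,b\,\|F_{0}\|_{L^{1}}$. \emph{(b)} Given $f\in\cE_{a}$ with $f_{0}\in L^{p}$, multiply $f$ by the translated kernel $z\mapsto\frac{\sin(z-x_{0})}{z-x_{0}}\in\cE_{1}$: the product lies in $\cE_{a+1}$ and, by H\"older's inequality (using $p<\infty$), its restriction lies in $L^{1}$; applying \emph{(a)} and evaluating at $x_{0}$, where the kernel equals $1$, gives $|f_{0}(x_{0})|\le C\|f_{0}\|_{L^{p}}$ uniformly in $x_{0}$. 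With $f_{0}\in L^{\infty}$ in hand, the classical Bernstein inequality gives $|f(x+iy)|\le\|f_{0}\|_{\infty}e^{a|y|}$ \emph{uniformly} in $x$. Finally, for $\psi\in C^{\infty}_{c}(\bbR\setminus[-a,a])$ one writes $\langle\widehat{f_{0}},\psi\rangle=\int_{\bbR}f(x)\,\cF\psi(x)\,dx$; since $\cF\psi$ extends to an entire function with rapid decay in $\Re z$ that is $O\!\big(e^{-(a+\delta)|\Im z|}\big)$, up to polynomial factors, in the half-plane on the side of $\supp\psi$, Cauchy's theorem lets one push the contour to $\Im z=\mp Y$ there, and the uniform bound on $f$ forces the integral to $0$ as $Y\to\infty$. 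Hence $\supp\widehat{f_{0}}\subseteq[-a,a]$.

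For (ii)$\,\Rightarrow\,$(iii): since $\widehat h$ is a compactly supported distribution, the Paley--Wiener--Schwartz theorem produces an entire $f\in\cE_{a}$ with $f_{0}=h$, so in particular $h\in C^{\infty}$. The core is the sharp Bernstein inequality $\|h^{(n)}\|_{L^p}\le a^{n}\|h\|_{L^p}$, which I would prove first for $n=1$ via M.\ Riesz's interpolation formula $h'(x)=\frac{a}{\pi^{2}}\sum_{k\in\bbZ}\frac{(-1)^{k+1}}{(k-\frac12)^{2}}\,h\!\big(x+\tfrac{(2k-1)\pi}{2a}\big)$, which exhibits $h'$ as an $\ell^{1}$-combination of translates of $h$ with coefficients summing in absolute value to exactly $a$; the identity holds because both sides have Fourier transform $i\xi\,\widehat h(\xi)$ and $\supp\widehat h\subseteq[-a,a]$, and Minkowski's inequality then gives $\|h'\|_{L^p}\le a\|h\|_{L^p}$. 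Since $h'$ is again of exponential type $a$ with $\supp\widehat{h'}\subseteq[-a,a]$ and lies in $L^{p}$, the argument iterates and yields $h^{(n)}\in L^{p}$ with $\|h^{(n)}\|_{L^p}\le a^{n}\|h\|_{L^p}$ for all $n$.

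The main obstacle is the \emph{sharp} constant $a^{n}$ in (iii): the rest is soft, but Bernstein's inequality with the exact constant genuinely requires exhibiting an extremal convolution kernel — the M.\ Riesz formula above, or an equivalent extremal/duality argument — which is the classical crux of the theory. A secondary subtlety is the Nikol'ski\u{\i} boundedness step in (i)$\,\Rightarrow\,$(ii): the bare exponential-type bound $|f(z)|\le C_{\eps}e^{(a+\eps)|z|}$ is not uniform in $\Re z$ and is therefore useless for the contour shift, so one cannot bypass the upgrade to the $x$-uniform estimate $|f(x+iy)|\le\|f_{0}\|_{\infty}e^{a|y|}$.
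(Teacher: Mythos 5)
Note first that the paper does not prove this statement at all: it quotes it as a classical fact and refers to \cite[Theorem 4]{Andersen}. So there is no internal proof to compare against; what follows is an assessment of your argument on its own terms.

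Your cycle (iii)$\Rightarrow$(i)$\Rightarrow$(ii)$\Rightarrow$(iii) is sound and is essentially the standard classical proof. For (iii)$\Rightarrow$(i): the Gagliardo--Nirenberg-type inequality $\|g\|_\infty\le p^{1/p}\|g\|_{L^p}^{1-1/p}\|g'\|_{L^p}^{1/p}$ is correct (integrate $(|g|^p)'$, use H\"older, and note $g\to 0$ at $\pm\infty$ since $g$ is uniformly continuous and $L^p$); it gives the geometric bound on $\|h^{(n)}\|_\infty$, and the integral form of the Taylor remainder then converges to $0$, so $h$ extends holomorphically with $|f(x+iy)|\le p^{1/p}a^{1/p}\|h\|_{L^p}e^{a|y|}$. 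For (i)$\Rightarrow$(ii): your two-step Nikol'ski\u{\i} argument is correct, with one thing worth making explicit. In step (a), after constructing the horizontal antiderivative $G$, one has $|G(z)|\le\|F_0\|_{L^1}+|\Im z|\sup_t|F(\Re z+it\Im z)|$, which is $O((1+|z|)e^{(b+\eps)|z|})$ and hence still of exponential type $b$; the inequality $|G(x+iy)|\le\|G_0\|_\infty e^{b|y|}$ is then the Phragm\'en--Lindel\"of result for entire functions of exponential type bounded on $\bbR$ (e.g. Boas), which should be cited as such rather than simply called ``the classical Bernstein estimate.'' With the resulting $x$-uniform bound $|f(x+iy)|\le\|f_0\|_\infty e^{a|y|}$, the contour shift against the entire extension of $\widehat\psi$ (which decays like $e^{-(a+\delta)|\Im z|}$ up to polynomial factors on the half-plane opposite $\supp\psi$) does indeed kill the integral, so $\supp\widehat{f_0}\subseteq[-a,a]$. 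For (ii)$\Rightarrow$(iii): the M.~Riesz interpolation formula as you wrote it is correct; with the shifts $(2k-1)\pi/(2a)$ and the substitution $\theta=\pi\xi/(2a)$ the multiplier $\frac{a}{\pi^2}\sum_k\frac{(-1)^{k+1}}{(k-1/2)^2}e^{i\xi(2k-1)\pi/(2a)}$ reduces to the Fourier expansion $\frac{2\theta}{\pi}=\frac{8}{\pi^2}\sum_{j\ge0}\frac{(-1)^j}{(2j+1)^2}\sin((2j+1)\theta)$ and equals $i\xi$ on $[-a,a]$, and $\sum_k\frac{1}{(k-1/2)^2}=\pi^2$ gives the coefficient sum $a$. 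One small gap to fill for full rigour in this step: to pass from ``both sides have the same Fourier transform on $[-a,a]$'' to a pointwise identity, one should either mollify $h$ to a Schwartz function with Fourier support near $[-a,a]$ and take limits, or note that the series defines an $L^p$ function $S$ with $\widehat S=m\widehat h$ for the continuous multiplier $m=\sum_k c_ke^{i\lambda_k\xi}$, and that $m$ coincides with the smooth function $i\xi$ on a neighbourhood of $\supp\widehat h$, so $m\widehat h=i\xi\widehat h=\widehat{h'}$ as tempered distributions. Iteration then gives the sharp constant $a^n$. In summary, your proof is correct and self-contained modulo the two standard citations (Phragm\'en--Lindel\"of for exponential type, Paley--Wiener--Schwartz); it is a legitimate proof of what the paper treats as a black-box citation.
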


The above theorem holds in the limit cases $p=1$ and $p=+\infty$ as well,
but in this paper we only focus on the range $1<p<+\infty$. 

We remark that in the Paley--Wiener characterization of the Bernstein
spaces, the Fourier transform of $f_0\in L^p(\bbR)$ is to be
understood in the sense of tempered distributions. Namely,
$L^p(\bbR)\subseteq \cS'$, and
the Fourier transform extends to a isomorphism of $\cS'$ onto
itself, where $ \widehat f$ is defined by the formula 
$$
\la\widehat f,\vp\ra=\la f,\widehat\vp\ra, \qquad
f\in\cS', \vp\in\cS \,. 
$$

The Paley--Wiener and Bernstein spaces are classical and deeply studied
for several reasons. A well-studied problem for these spaces, for
instance, is the sampling problem and we refer the reader to 
\cite{Seip}, \cite{OS} and references therein. Moreover, the
Paley--Wiener space $PW_a$ is the most important example of a de
Branges space, which are spaces of entire functions
introduced by L. de Branges in \cite{dB}. They have deep connections
with  canonical systems and have been extensively studied in the recent
years. For an overview of de Branges spaces and canonical systems we
refer the reader, for instance, to \cite{Romanov}.  

In this paper we introduce a family of spaces which generalizes the
classical Paley--Wiener and 
Bernstein spaces; we deal with spaces of entire functions of
exponential type $a$ whose restriction to the real line belongs to some
homogeneous Sobolev space and we call these spaces fractional
Paley--Wiener and Bernstein spaces. 
The investigation of these
spaces is not only motivated from the mere will to extend some
classical results, but from the fact that these spaces arise very
naturally in the several variable setting. In order to recover some
classical 1-dimensional results in higher dimension, such as a
Shannon-type sampling theorem, it is necessary to work with suitable
defined fractional Paley--Wiener spaces on $\bbC^{n+1}$. We refer the
reader to \cite{AMPS,MPS} for details and results in the several
variable setting.
In the present work we
start such investigation: we introduce the spaces, we study some of
their structural properties, we prove a Paley--Wiener type
characterization and generalizations of the classical Bernstein and
Plancherel--P\'olya inequalities.
We also point out that classical results such as
sampling theorems for the Paley--Wiener space do not necessarily
extend to the fractional setting (Section \ref{sampling}). 
Finally we mention the papers \cite{P1,P2} in which  the authors studied
other generalizations of the Paley--Wiener spaces. 
\ms

We now precisely define the function spaces we are interested
in. Given a function $f\in\cS$ and $s>0$, we define its fractional
Laplacian $\Delta^{\frac s2}f$ as 
\begin{equation*}
\Delta^{\frac s2}f:= \cF^{-1}(|\cdot|^s\cF f)
\end{equation*}
and we set
$$
\|f\|_{s,p}:=\|\Delta^{\frac s2}f\|_{L^p}.
$$

We remark that for $f\in\cS$ the fractional Laplacian
$\Delta^{\frac s2}f$ is a well-defined function and that
$\|\cdot\|_{s,p}$ is a norm on the Schwartz space (see, for instance,
\cite{MPS-sob}). Therefore, we define the homogeneous Sobolev space
$\dot{W}^{s,p}$ as the closure of $\cS$ with respect to
$\|\cdot\|_{s,p}$, i.e., 
\begin{equation*}
 \dot{W}^{s,p}=\overline{\cS}^{\|\cdot\|_{s,p}}.
\end{equation*}

As described in \cite{MPS-sob}, the space $\dot{W}^{s,p}$ turns out to
be a quotient space of tempered distributions modulo polynomials of
degree $m=\lfloor s-1/p\rfloor $, where we denote by $\lfloor x\rfloor $ the integer part of
$x\in\bbR$ and by
$\cP_m$
the set
polynomials of degree at most $m$, where $m\in\bbN_0$.
In \cite[Corollary 3.3]{MPS-sob} we prove that $f\in\dot{W}^{s,p}$ if and only if  
\begin{itemize}
 \item[{\tiny $\bullet$}] $f\in\cS'/\cP_m$;
 \item[{\tiny $\bullet$}] there exists a sequence $\{f_n\}\subseteq \cS$
   such that $f_n\to f$ in $\cS'/\cP_m$;  
 \item[{\tiny $\bullet$}] the sequence $\{\Delta^{\frac s2}f_n\}$ is a Cauchy
   sequence with respect to the $L^p$ norm.
 \end{itemize}
 If $f\in\dot{W}^{s,p}$ we then set 
\begin{equation}\label{frac-lapl-2}
\Delta^{\frac s2}f=\lim_{n\to+\infty}\Delta^{\frac s2}f_n,
\end{equation}
where the limit is to be understood as a limit in the $L^p$ norm.

In order to avoid working in a quotient space, instead of considering
the spaces $\dot{W}^{s,p}$, we consider the realization spaces $E^{s,p}$, 
see \cite[Corollary 3.2]{MPS-sob}.    Inspired by the works of
G. Bourdaud
\cite{Bourdaud, bourdaud2, bourdaud3}, if $m\in\bbN_0\cup\{\infty\}$ and
$\dot{X}$ is a given  subspace  of
$\cS'/\cP_m$ which is a Banach space, such that the natural inclusion of
$\dot{X}$ into $\cS'/\cP_m$ is continuous,
we call a subspace $E$ of $\cS'$ a {\em realization} of $\dot{X}$ if there exists a bijective linear map
$$
R: \dot{X}\to E
$$ 
such that $\big[ R[u] \big] = [u]$ for every equivalence class $[u]\in\dot{X}$. 
We endow $E$ of the norm given by $\| R[u]\|_E = \| [u] \|_{\dot{X}}$.

For $\gamma>0$, we denote by $\dot{\Lambda}^{\gamma}$ the homogeneous Lipschitz space
of order $\gamma$.
Given a locally integrable function,  and  an
interval $Q$, we 
denote by $f_Q$ the average of $f$ over $Q$, and denote by $\BMO$ the
standard space of functions (modulo constants) of bounded mean
oscillation.  Finally, for a sufficiently smooth function $f$, we denote
by $P_{f;m;0}$ the Taylor polynomial of $f$ of order $m$ at the origin.
The next result describes the realization spaces $E^{s,p}$.

\begin{Theorem}[\cite{MPS-sob}]\label{real-hom-sob} 
For $s>0$ and $p\in(1,+\infty)$, let $m=\lfloor s-\frac 1p\rfloor$. Then, 
$\dot{W}^{s,p}\subseteq \cS'/\cP_m $.  We define the spaces $E^{s,p}$
as follows.

\noindent
{\rm (i)}
Let $0<s<\frac 1p$, and let $p^*\in(1,\infty)$ given by
$\frac1p - \frac{1}{p^*}=s$, define
 $$
E^{s,p}=\big\{f\in L^{p^*}:\,
\| f\|_{E^{s,p}} := \|\Delta^{s/2} f\|_{L^p} <+\infty \big\} \,.
 $$

\noindent
{\rm (ii)}  Let $s-\frac 1p\in \bbR^{+}\backslash \bbN$  let
 $m=\lfloor s-\frac 1p\rfloor $, and define
$$
 E^{s,p}
=\Big\{ 
f\in\dot{\Lambda}^{s-\frac 1p}:\, 
\ P_{f;m;0}=0\,,\  
\| f\|_{E^{s,p}} := \|\Delta^{s/2} f\|_{L^p}<+\infty\, \Big\}\,.
$$

\noindent
{\rm (iii)} Let $s-\frac 1p\in\bbN_0$. Fix the bounded interval
$Q=[0,2\pi]$.  
If $s=\frac 1p$, let
\begin{equation*} 
E^{s,p}
=\Big\{ 
f\in\BMO:\,  f_Q=0,\,
\| f\|_{E^{s,p}} := \|\Delta^{\frac s2}f\|_{L^p}<+\infty \Big\}\,. 
\end{equation*} 
 If $m=s-\frac1p$ is a positive integer, define
 \begin{equation*} 
E^{s,p}
=\Big\{ 
f\in\cS'\cap\cC^{m-1}: P_{f;m-1;0}=0,\, f^{(m)}\in \BMO,\,
f^{(m)}_Q=0,\, 
\| f\|_{E^{s,p}} :=\|\Delta^{\frac s2}f\|_{L^p}<+\infty \Big\}\,. 
\end{equation*} 

Then, the space $E^{s,p}$ is a realization space for $\dot{W}^{s,p}$.
\end{Theorem}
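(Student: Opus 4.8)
The plan is to identify $\dot{W}^{s,p}$ with the homogeneous Triebel--Lizorkin space $\dot{F}^{s}_{p,2}(\bbR)$ and then to run a Littlewood--Paley analysis of its completion, in the spirit of G. Bourdaud's theory of realizations. Fix a homogeneous dyadic resolution of the identity $1=\sum_{j\in\bbZ}\widehat{\psi_j}$ on $\bbR\setminus\{0\}$ with $\widehat{\psi_j}=\widehat{\psi}(2^{-j}\cdot)$, write $\dotDelta_j f=\psi_j*f$ for the corresponding blocks, and recall that the H\"ormander--Mikhlin multiplier theorem gives, for $f\in\cS$,
\[
\|\Delta^{s/2}f\|_{L^p}\simeq\Big\|\Big(\sum_{j\in\bbZ}2^{2js}|\dotDelta_j f|^2\Big)^{1/2}\Big\|_{L^p}\,,
\]
so that $\|\cdot\|_{s,p}$ is equivalent on $\cS$ to the $\dot{F}^{s}_{p,2}$ norm and $\dot{W}^{s,p}$ is the completion of $\cS$ in the latter. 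Everything then reduces to deciding, for a $\|\cdot\|_{s,p}$-Cauchy sequence $\{f_n\}\subseteq\cS$, which tempered distribution modulo which polynomials it converges to, and to singling out a canonical representative.

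The core estimate concerns the low modes. For $k\in\bbN_0$, Bernstein's inequalities give
\[
\|\partial^{k}\dotDelta_j f\|_{L^\infty}\lesssim 2^{j(k+\frac1p)}\|\dotDelta_j f\|_{L^p}\lesssim 2^{\,j(k+\frac1p-s)}\|f\|_{\dot{F}^{s}_{p,2}}\,,
\]
where the last inequality uses $2^{js}|\dotDelta_j f(x)|\le\big(\sum_k 2^{2ks}|\dotDelta_k f(x)|^2\big)^{1/2}$ pointwise. With $m=\lfloor s-\tfrac1p\rfloor$, for every $k\ge m+1$ the series $\sum_{j\le 0}\partial^{k}\dotDelta_j f$ converges geometrically in $L^\infty_{\mathrm{loc}}$, uniformly over bounded subsets of $\dot{F}^{s}_{p,2}$; on the other hand, since $s>0$, the Littlewood--Paley inequality gives $\big\|\sum_{j>0}\dotDelta_j f\big\|_{L^p}\lesssim\|f\|_{\dot{F}^{s}_{p,2}}$. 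Hence, for a $\|\cdot\|_{s,p}$-Cauchy sequence $\{f_n\}$, the high-frequency part $\sum_{j>0}\dotDelta_j f_n$ converges in $L^p$ while $\partial^{m+1}\big(\sum_{j\le 0}\dotDelta_j f_n\big)$ converges in $\cS'$; integrating $m+1$ times, $\sum_{j\le 0}\dotDelta_j f_n$ converges in $\cS'$ modulo an element of $\cP_m$. This is exactly why $\dot{W}^{s,p}\subseteq\cS'/\cP_m$, the polynomial ambiguity being no larger than $\cP_m$ (and trivial when $s<\tfrac1p$, i.e. $m=-1$). It is here, too, that the borderline case $s-\tfrac1p=m\in\bbN_0$ forces the $\BMO$ phenomenon: for $k=m$ the exponent $k+\tfrac1p-s$ vanishes, so $\partial^{m}\dotDelta_j f$ is merely uniformly bounded in $j$, not summable.

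I would then treat the three cases of the statement through the matching Sobolev-type embeddings together with a normalization that kills the polynomial freedom. In case (i), $0<s<\tfrac1p$, the Hardy--Littlewood--Sobolev inequality gives $\|f\|_{L^{p^*}}\lesssim\|\Delta^{s/2}f\|_{L^p}$ on $\cS$ with $\tfrac1{p^*}=\tfrac1p-s$; since $\cP_m=\{0\}$ the completion embeds into $L^{p^*}$, the realization map simply passes to the $L^{p^*}$-limit, and its image is $\{f\in L^{p^*}:\Delta^{s/2}f\in L^p\}$. In case (ii), $s-\tfrac1p\in\bbR^{+}\setminus\bbN$, the embedding $\dot{F}^{s}_{p,2}\hookrightarrow\dot{\Lambda}^{s-1/p}$ gives $\|f\|_{\dot{\Lambda}^{s-1/p}}\lesssim\|\Delta^{s/2}f\|_{L^p}$ on $\cS$; a Cauchy sequence then has a $\dot{\Lambda}^{s-1/p}$-limit once the Taylor polynomials $P_{f_n;m;0}$ are removed, and one sets $R[\{f_n\}]=\lim_n\big(f_n-P_{f_n;m;0}\big)$, the unique representative whose Taylor polynomial of order $m$ at $0$ vanishes. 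In case (iii), $s-\tfrac1p=m\in\bbN_0$, one uses the endpoint embedding $\dot{F}^{1/p}_{p,2}\hookrightarrow\BMO$ (via $\dot{F}^{0}_{\infty,2}=\BMO$), applied to $f$ itself when $s=\tfrac1p$ and to $f^{(m)}$ when $m\ge 1$, and pins down the surviving constant by imposing $f_Q=0$, respectively $P_{f;m-1;0}=0$ and $f^{(m)}_Q=0$, with $Q=[0,2\pi]$ fixed; the average over the fixed interval $Q$ is precisely what makes these normalizations linear and continuous. In every case one must finally verify that $R$ is a linear bijection onto $E^{s,p}$ preserving equivalence classes modulo $\cP_m$ and that the transported norm $\|R[u]\|_{E^{s,p}}=\|[u]\|_{\dot{W}^{s,p}}$ coincides with $\|\Delta^{s/2}\,\cdot\,\|_{L^p}$ on the image.

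I expect two points to carry the real weight. The first is the endpoint embedding into $\BMO$ together with the claim that on $E^{s,p}$ the object $\Delta^{s/2}f$ is a genuine $L^p$ function rather than merely a distribution, which requires the Sobolev embedding for Triebel--Lizorkin spaces, the Fefferman--Stein identification $\dot{F}^{0}_{\infty,2}=\BMO$, and a careful treatment of the low modes. The second, and harder, is \emph{surjectivity and density}: given $f\in E^{s,p}$ one must produce $f_n\in\cS$ with $\|f-f_n\|_{s,p}\to 0$ \emph{and} $f_n\to f$ in $\cS'/\cP_m$, i.e. the usual mollification-and-frequency-truncation scheme has to be arranged so as not to spoil the normalization. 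This is where the fixed interval $Q$ and the vanishing-Taylor-polynomial conditions are essential, and where one has to be most careful, since the homogeneous norm by itself does not control the low-frequency behaviour at all.
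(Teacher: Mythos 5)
This statement is quoted from \cite{MPS-sob} and the paper contains no proof of it; what can be compared is your proposed route against the approach that the paper's framing and source clearly indicate is the one taken there, namely Bourdaud-style realization theory via a homogeneous Littlewood--Paley decomposition (the paper explicitly says it is ``inspired by the works of G.~Bourdaud'' and a commented-out block in the source uses exactly the decomposition $f=\sum_j\dotDelta_j u$ with Taylor-polynomial subtraction). Your plan matches that approach: the equivalence $\|\Delta^{s/2}f\|_{L^p}\simeq\|f\|_{\dot F^s_{p,2}}$ via Mikhlin, the Bernstein estimate $\|\partial^k\dotDelta_j f\|_{L^\infty}\lesssim 2^{j(k+1/p-s)}\|f\|_{\dot F^s_{p,2}}$ controlling the low modes, the observation that for $k\ge m+1$ the low-frequency sum converges while for $k\le m$ it does not (hence the quotient by $\cP_m$ and the Taylor normalization), the Hardy--Littlewood--Sobolev and $\dot F^s_{p,2}\hookrightarrow\dot\Lambda^{s-1/p}$ embeddings in cases (i) and (ii), the $\dot F^0_{\infty,2}=\BMO$ endpoint in case (iii), and the identification of density/surjectivity as the delicate step are all correct and are precisely the ingredients of the cited proof. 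I see no gap in outline; the one place your sketch leaves genuinely implicit --- how to verify that $\|R[u]\|_{E^{s,p}}=\|[u]\|_{\dot W^{s,p}}$ and that $R$ is onto, i.e.\ the density of $\cS$ in each $E^{s,p}$ with the stated normalizations --- is exactly the step you flag as the harder point, and it is.
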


When restricted to $E^{s,p}$, $\|\cdot\|_{s,p}$ is no longer a
semi-norm, but a genuine norm. In particular, the fractional Laplacian
on $E^{s,p}$ is injective.
\ms

We are now ready to define the fractional Bernstein spaces.

\begin{defn}\label{frac-bernstein-defn} {\rm
 For $a,s>0$ and $1<p<+\infty$ the fractional Bernstein space $\cB^{s,p}_a$ is defined as
 \begin{equation*}
\Bsp=\big\{f\in \cE_{a}:\,  f_0\in E^{s,p} \text{ and, if } s\ge
1/p \text{ and } \, m=\lfloor s-1/p\rfloor,\,  P_{f_0;m;0}=0\big\}\,.
\end{equation*}
We endow the space $\cB^{s,p}_a$ with the norm
$\|f\|_{\cB^{s,p}_a}:=\|f_0\|_{E^{s,p}}$. 
} \ms
\end{defn}

\begin{remark}{\rm
In this paper we restrict ourselves to the case
$s-\frac1p\notin\bbN_0$, $s>0$, $p\in (1,\infty)$. 
The case $s-\frac1p\in\bbN_0$ could be thought to be the {\em
  critical} case, as in the Sobolev embedding theorem.
All the proofs  break down for these values of $s$ and $p$, although
we believe that all the results in this paper extend also to case
$s-\frac1p\in\bbN_0$. 

Thus, the case $s-\frac1p\in\bbN_0$ remains open and is, in our opinion,
of considerable interest.  We will add some comments on this problem
in the final Section \ref{Final-Sect}.
} 
\end{remark}
\smallskip

\begin{remark}{\rm
We point out that from the results in the present work we can easily
deduce analogous results for the {\em homogeneous} fractional
Bernstein spaces $\dot{\cB}^{s,p}_a$, defined as above, but without requiring that
$P_{f_0;m;0}=0$.  In this way, we obtain spaces of entire functions of
exponential type {\em modulo} polynomials of degree $m=\lfloor s-\frac1p\rfloor$.
}
\end{remark}

We first consider the spaces $PW^s_a$, $s>0$, and we prove some
Paley--Wiener type theorems assuming that $s-\frac12\notin\bbN_0$. For any $s>0$ let 
$L^2_a(|\xi|^{2s})$ be the weighted $L^2$-space 
\begin{equation*}
L^2_a(|\xi|^{2s})=\bigg\{f:[-a,a]\to\bbC\, \text{\ such that\ }
  \int_{-a}^a|f(\xi)|^2|\xi|^{2s}\,d\xi<\infty \bigg\}. 
\end{equation*}

We prove the following Paley--Wiener type theorems. We distinguish the
case $0<s<\frac12$ from the case $s>\frac12$.
\begin{theorem}\label{main-1}
Let $0<s<\frac12$ and let $f\in PW^s_a$. Then, 
$\supp\widehat
{f_0}\subseteq [-a,a]$, 
$\widehat
{f_0} \in L^2_a(|\xi|^{2s})$ and 
\begin{equation}\label{PW-1-eq1}
 f(z)=\frac{1}{\sqrt{2\pi}}\int_{-a}^a\widehat f_0(\xi) e^{iz\xi}\, d\xi\,.
\end{equation}
Moreover, 
$\|f\|_{PW^s_a}=\|\widehat{f_0}\|_{L^2_a(|\xi|^{2s})}$. 
 Conversely, let $g\in L^2_a(|\xi|^{2s})$, and define $f$ by setting
 \begin{equation}\label{PW-1-eq2}
  f(z)=\frac{1}{\sqrt{2\pi}}\int_{-a}^{a}g(\xi)e^{iz\xi}\, d\xi.
 \end{equation}
  Then, $f\in PW^s_a$, $\widehat {f_0}=g$ and
 $\|f\|_{PW^s_a}=\|g\|_{L^2_a(|\xi|^{2s})}$. 
\end{theorem}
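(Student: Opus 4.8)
The strategy is to bootstrap the classical Paley--Wiener characterization of the Bernstein spaces, recalled above, by one application of Plancherel's theorem and a careful analysis of the possible atom of $\widehat{f_{0}}$ at the origin. Throughout put $p^{*}=\frac{2}{1-2s}\in(2,\infty)$, so that $\frac1{p^{*}}=\frac12-s$. Assume first $f\in PW^{s}_{a}=\cB^{s,2}_{a}$. Since $0<s<\frac12$ we have $m=\lfloor s-\frac12\rfloor=-1$, so by Definition~\ref{frac-bernstein-defn} and Theorem~\ref{real-hom-sob}(i) no vanishing condition is imposed and $f_{0}\in E^{s,2}$ means precisely that $f_{0}\in L^{p^{*}}$ and $\Delta^{s/2}f_{0}\in L^{2}(\bbR)$, with $\|\Delta^{s/2}f_{0}\|_{L^{2}}=\|f\|_{PW^{s}_{a}}$. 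In particular $f\in\cE_{a}$ and $f_{0}\in L^{p^{*}}$ with $p^{*}\in(1,\infty)$, hence $f\in\cB^{p^{*}}_{a}$ and the Paley--Wiener characterization of $\cB^{p}_{a}$ gives $\supp\widehat{f_{0}}\subseteq[-a,a]$. Next, writing $g:=\cF(\Delta^{s/2}f_{0})\in L^{2}(\bbR)$, one checks (localizing away from the origin, where $|\xi|^{s}$ is smooth, and using the realization description of $E^{s,2}$) that $g=|\xi|^{s}\widehat{f_{0}}$ as distributions on $\bbR\setminus\{0\}$; combined with $\supp\widehat{f_{0}}\subseteq[-a,a]$ this gives $g\in L^{2}([-a,a])$ and $\widehat{f_{0}}=|\xi|^{-s}g$ on $\bbR\setminus\{0\}$.

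Because $s<\frac12$, the weight $|\xi|^{-s}$ is locally square-integrable, so $|\xi|^{-s}g\in L^{1}([-a,a])$ by the Cauchy--Schwarz inequality; thus $\widehat{f_{0}}-|\xi|^{-s}g$ is a distribution supported at $\{0\}$, i.e.\ $\widehat{f_{0}}=|\xi|^{-s}g+\sum_{j=0}^{N}c_{j}\delta_{0}^{(j)}$ for some $N$ and constants $c_{j}$. Taking inverse Fourier transforms, $f_{0}=\cF^{-1}(|\xi|^{-s}g)+q$ with $q$ a polynomial and, since $|\xi|^{-s}g\in L^{1}$, $\cF^{-1}(|\xi|^{-s}g)$ a continuous function vanishing at infinity; if $q\not\equiv0$ then $\liminf_{|x|\to\infty}|f_{0}(x)|>0$, contradicting $f_{0}\in L^{p^{*}}$ with $p^{*}<\infty$. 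Hence $q\equiv0$ and $\widehat{f_{0}}=|\xi|^{-s}g\in L^{1}([-a,a])$, so $\int_{-a}^{a}|\widehat{f_{0}}(\xi)|^{2}|\xi|^{2s}\,d\xi=\int_{-a}^{a}|g(\xi)|^{2}\,d\xi=\|\Delta^{s/2}f_{0}\|_{L^{2}}^{2}=\|f\|_{PW^{s}_{a}}^{2}$, which gives $\widehat{f_{0}}\in L^{2}_{a}(|\xi|^{2s})$ and the norm identity. Finally, since $\widehat{f_{0}}\in L^{1}([-a,a])$, Fourier inversion yields $f_{0}(x)=\frac{1}{\sqrt{2\pi}}\int_{-a}^{a}\widehat{f_{0}}(\xi)e^{ix\xi}\,d\xi$; the right-hand side of \eqref{PW-1-eq1} is entire in $z$ by Morera's theorem and coincides with $f$ on $\bbR$, so \eqref{PW-1-eq1} follows from the identity principle.

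For the converse, let $g\in L^{2}_{a}(|\xi|^{2s})$. As above $g\in L^{1}([-a,a])$, so $f(z):=\frac{1}{\sqrt{2\pi}}\int_{-a}^{a}g(\xi)e^{iz\xi}\,d\xi$ is entire with $|f(z)|\le(2\pi)^{-1/2}\|g\|_{L^{1}([-a,a])}e^{a|\Im z|}$, hence $f\in\cE_{a}$; moreover $f_{0}=\cF^{-1}(g\mathbf 1_{[-a,a]})$, so $\widehat{f_{0}}=g\mathbf 1_{[-a,a]}$ and, since $|\xi|^{s}g\in L^{2}([-a,a])$, $\Delta^{s/2}f_{0}=\cF^{-1}(|\xi|^{s}g\mathbf 1_{[-a,a]})\in L^{2}(\bbR)$ with $\|\Delta^{s/2}f_{0}\|_{L^{2}}=\|g\|_{L^{2}_{a}(|\xi|^{2s})}$. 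It remains to see $f_{0}\in L^{p^{*}}$: setting $u:=\Delta^{s/2}f_{0}\in L^{2}(\bbR)$ we have $f_{0}=\cF^{-1}(|\xi|^{-s}\widehat{u})$, i.e.\ $f_{0}$ is a constant multiple of the Riesz potential $I_{s}u$, and the Hardy--Littlewood--Sobolev inequality (valid since $0<s<\frac12$ and $\frac1{p^{*}}=\frac12-s$) gives $f_{0}\in L^{p^{*}}$. By Theorem~\ref{real-hom-sob}(i), $f_{0}\in E^{s,2}$, and since $s<\frac12$ there is no vanishing condition, so $f\in PW^{s}_{a}$ with $\|f\|_{PW^{s}_{a}}=\|\Delta^{s/2}f_{0}\|_{L^{2}}=\|g\|_{L^{2}_{a}(|\xi|^{2s})}$ and $\widehat{f_{0}}=g$.

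The main obstacle is the passage, in the direct implication, from the a priori distributional information $|\xi|^{s}\widehat{f_{0}}\in L^{2}$ to the genuine membership $\widehat{f_{0}}\in L^{2}_{a}(|\xi|^{2s})$: this is exactly where the range $0<s<\frac12$ is used, so that $|\xi|^{-s}$ is locally square-integrable, $|\xi|^{-s}g\in L^{1}$, and $E^{s,2}$ carries no polynomial ambiguity, which together force the atom at the origin to vanish. Making the identity $\cF(\Delta^{s/2}f_{0})=|\xi|^{s}\widehat{f_{0}}$ precise for $f_{0}\in E^{s,2}$ also requires a little care and relies on the realization picture of Theorem~\ref{real-hom-sob}.
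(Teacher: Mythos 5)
Your converse implication follows essentially the paper's line: estimate $|f(z)|$ by Cauchy--Schwarz to get exponential type, use the Riesz potential (HLS) to place $f_0\in L^{p^*}$, and Plancherel for $\Delta^{s/2}f_0\in L^2$. The direct implication, however, is genuinely different from the paper's. The paper proves a single Lemma (``Lemma~\ref{cpt-support}'') valid for all $s>0$: it shows $\supp\widehat{f_0}\subseteq[-a,a]$ by convolving $f$ against test functions $\vp\in\cS_M\cap C^\infty_c$, proving $f_\vp\in PW_a$ via the identity $f_0\ast\vp=\Delta^{\frac s2}f_0\ast\cI_s\vp$ and Hardy-space mapping properties of Riesz potentials, and then deducing the claim from the classical Paley--Wiener theorem. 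That lemma is heavy because it must also serve Theorem~\ref{main-2} (the case $s>\frac12$). You instead exploit the Sobolev embedding special to $s<\frac12$: $f_0\in E^{s,2}\subseteq L^{p^*}$ with $p^*\in(2,\infty)$, so $f\in\cB^{p^*}_a$ and the classical Bernstein characterization immediately gives $\supp\widehat{f_0}\subseteq[-a,a]$. Then you settle the possible atom of $\widehat{f_0}$ at the origin explicitly, writing $\widehat{f_0}=|\xi|^{-s}g+\sum_j c_j\delta_0^{(j)}$ and using $f_0\in L^{p^*}$ to kill the polynomial coming from the distributional part. This is a cleaner and more elementary path in the range $0<s<\frac12$, and in fact it is more explicit than the paper at the one delicate step --- the paper asserts $\widehat{f_0}\in L^2_a(|\xi|^{2s})$ as a consequence of Lemma~\ref{cpt-support} without spelling out why no Dirac mass at $0$ can occur, whereas you address it directly. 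The trade-off is that your argument does not generalize to $s>\frac12$, while the paper's lemma is reused there. Both approaches are correct.

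One small point worth making explicit if you write this up: the identity $\cF(\Delta^{s/2}f_0)=|\xi|^{s}\widehat{f_0}$ on $\bbR\setminus\{0\}$ should be justified via the approximating sequence $\{\vp_n\}\subseteq\cS$ with $\vp_n\to f_0$ in $\cS'$ and $\Delta^{s/2}\vp_n\to\Delta^{s/2}f_0$ in $L^2$: for $\psi\in C^\infty_c(\bbR\setminus\{0\})$ one has $|\xi|^s\psi\in\cS$, so
\begin{equation*}
\la|\xi|^s\widehat{f_0},\psi\ra
=\la\widehat{f_0},|\xi|^s\psi\ra
=\lim_n\la\widehat{\vp_n},|\xi|^s\psi\ra
=\lim_n\la|\xi|^s\widehat{\vp_n},\psi\ra
=\la\cF(\Delta^{s/2}f_0),\psi\ra,
\end{equation*}
where the last equality uses the $L^2$ convergence $|\xi|^s\widehat{\vp_n}\to\cF(\Delta^{s/2}f_0)$. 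You gesture at this (``localizing away from the origin ... using the realization description''), which is the right idea, but the chain above is what makes it precise.
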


\begin{defn}\label{Ksm-def}{\em 
    Given $s>\frac12$, let
    $m=\lfloor s-\frac12\rfloor$, for any  $g\in
L^2(|\xi|^{2s})$ we define  $Tg$ by setting, for $\psi\in\cS$,
\begin{equation}\label{defn-Tg}
  \la Tg,\psi\ra:=\frac{1}{\sqrt{2\pi}}\int_\bbR g(\xi)
  \big( \psi(\xi)-P_{\psi;m;0}(\xi) \big)\, d\xi.
\end{equation}
As we will see, Lemma \ref{lemma-Tg}, $Tg$ is well-defined for any
$\psi\in\cS$, in particular $Tg\in\cS'$, and $T: L^2(|\xi|^{2s})\to
\cS'$ is a continuous operator. 
}
\end{defn}

We denote by $\cD'_c$ the space of distributions with compact support,
which is the dual of $C^\infty$. 

\begin{theorem}\label{main-2}
Let $s>\frac12$, $m=\lfloor s-\frac12\rfloor$, assume that $s-\frac12\notin\bbN$  and set
$P_m(iz\xi)=\sum_{j=0}^{m}(iz\xi)^j/j!$.   
 Let $f\in PW^s_a$, then $\supp\widehat{f_0}\subseteq [-a,a]$ and there exists  $g \in
L^2_a(|\xi|^{2s})$ such that $\widehat f_0=Tg$ in $\cD'_c$, 
 and
  \begin{align}\label{PW-poly-eq1}
  \begin{split}
f(z)&=\la\widehat f_0, e^{iz(\cdot)}\ra=\frac{1}{\sqrt{2\pi}}\int_{-a}^a g(\xi) \big(
e^{iz\xi}-P_m(iz\xi) \big)\, d\xi.
 \end{split}
 \end{align}
Moreover, $\|f\|_{PW^s_a}=\| g\|_{L^2_a(|\xi|^{2s})}$. Conversely, let $g\in L^2_a(|\xi|^{2s})$
and define $f$ by setting  
\begin{equation}\label{PW-poly-eq2}
f(z)=\la Tg, e^{iz(\cdot)}\ra=\frac{1}{\sqrt{2\pi}}\int_{-a}^a g(\xi) \big(e^{iz\xi}-P_m(iz\xi) \big)\, d\xi.
\end{equation}
Then, $f\in PW^s_a$ and $\|f\|_{PW^s_a}=\|g\|_{L^2_a(|\xi|^{2s})}$. \ms
\end{theorem}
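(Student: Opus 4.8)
\emph{Plan of the proof.} Throughout put $m=\lfloor s-\frac12\rfloor$; since $s>\frac12$ and $s-\frac12\notin\bbN$ we have $m\ge0$, $m<s$, in fact $s\in(m+\frac12,m+\frac32)$, and Theorem~\ref{real-hom-sob} applies in case~(ii), so $E^{s,2}=\{h\in\dot{\Lambda}^{s-\frac12}:P_{h;m;0}=0,\ \Delta^{s/2}h\in L^2\}$ with $\|h\|_{E^{s,2}}=\|\Delta^{s/2}h\|_{L^2}$, and $\Delta^{s/2}$ is injective on $E^{s,2}$. The algebraic fact that drives the proof is that, since $0\le j\le m<s$, the function $|\xi|^s$ is of class $C^m$ near the origin with $\frac{d^j}{d\xi^j}|\xi|^s|_0=0$; hence $P_{|\cdot|^s\psi;m;0}\equiv0$ for every $\psi\in\cS$, so that for $g\in L^2_a(|\xi|^{2s})$ one has $\la Tg,|\cdot|^s\psi\ra=\frac1{\sqrt{2\pi}}\int_{-a}^a g(\xi)|\xi|^s\psi(\xi)\,d\xi=\la G_1,\psi\ra$, where $G_1:=|\cdot|^s g\,\mathbf{1}_{[-a,a]}\in L^2(\bbR)$ satisfies $\|G_1\|_{L^2}=\|g\|_{L^2_a(|\xi|^{2s})}$; in other words, multiplication by $|\xi|^s$ undoes the regularization built into $T$. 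The cleanest route is to prove the converse assertion first and to deduce the direct one from it by a uniqueness argument.

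\emph{Converse implication.} Let $g\in L^2_a(|\xi|^{2s})$. By Lemma~\ref{lemma-Tg} $Tg\in\cS'$, and testing $Tg$ against test functions supported off $[-a,a]$ gives $\supp Tg\subseteq[-a,a]$, so $Tg\in\cD'_c$; hence $\cF^{-1}(Tg)$ extends to an entire function of exponential type $\le a$ (Paley--Wiener--Schwartz), and evaluating the extended functional $Tg$ on $\psi=e^{iz(\cdot)}$ — whose Taylor polynomial of order $m$ at $\xi=0$ is $P_m(iz\xi)$ — identifies this extension with the function $f$ of \eqref{PW-poly-eq2}. Thus $f\in\cE_a$ (the bound $|e^w-P_m(w)|\le\frac{|w|^{m+1}}{(m+1)!}e^{|w|}$ and $\int_{-a}^a|g(\xi)||\xi|^{m+1}\,d\xi<\infty$, valid since $2(m+1)-2s>-1$, in fact give $|f(z)|\lesssim|z|^{m+1}e^{a|z|}$), $f_0=\cF^{-1}(Tg)$, and $\widehat{f_0}=Tg$ in $\cD'_c$; differentiating \eqref{PW-poly-eq2} at $z=0$ and using $\frac{d^j}{dz^j}(e^{iz\xi}-P_m(iz\xi))|_0=0$ for $j\le m$ gives $P_{f_0;m;0}=0$. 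To place $f_0$ in $E^{s,2}$, pick $G_1^{(n)}\in C^\infty_c(\bbR\setminus\{0\})$ with $G_1^{(n)}\to G_1$ in $L^2$, set $g^{(n)}:=|\cdot|^{-s}G_1^{(n)}\in C^\infty_c(\bbR)\subseteq\cS$ and $f_n:=\cF^{-1}(g^{(n)})\in\cS$; then $\Delta^{s/2}f_n=\cF^{-1}(G_1^{(n)})\to\cF^{-1}(G_1)$ in $L^2$, while $g^{(n)}\to g$ in $L^2(|\xi|^{2s})$ gives $Tg^{(n)}\to Tg$ in $\cS'$ by Lemma~\ref{lemma-Tg} and $f_n-\cF^{-1}(Tg^{(n)})\in\cP_m$, so $[f_n]\to[\cF^{-1}(Tg)]=[f_0]$ in $\cS'/\cP_m$. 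By the characterization of $\dot{W}^{s,2}$ recalled in the Introduction, $[f_0]\in\dot{W}^{s,2}$ and $\Delta^{s/2}f_0=\cF^{-1}(G_1)$, hence $\|\Delta^{s/2}f_0\|_{L^2}=\|G_1\|_{L^2}=\|g\|_{L^2_a(|\xi|^{2s})}$; together with $P_{f_0;m;0}=0$, Theorem~\ref{real-hom-sob}(ii) and uniqueness of the realization representative yield $f_0\in E^{s,2}$, i.e.\ $f\in PW^s_a$, with $\|f\|_{PW^s_a}=\|g\|_{L^2_a(|\xi|^{2s})}$.

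\emph{Direct implication.} Let $f\in PW^s_a$. Since $f_0\in E^{s,2}\subseteq\dot{\Lambda}^{s-\frac12}$ has at most polynomial growth, a Phragm\'en--Lindel\"of argument together with Paley--Wiener--Schwartz (as in the proof of Theorem~\ref{main-1}) gives $\supp\widehat{f_0}\subseteq[-a,a]$. Writing $\Delta^{s/2}f_0=\lim_n\Delta^{s/2}f_n$ in $L^2$ for a defining sequence $f_n\in\cS$ with $f_n\to f_0$ in $\cS'/\cP_m$, on $\bbR\setminus\{0\}$ one has $\widehat{f_n}\to\widehat{f_0}$, hence $|\cdot|^s\widehat{f_n}\to|\cdot|^s\widehat{f_0}$ in $\cD'(\bbR\setminus\{0\})$; but $|\cdot|^s\widehat{f_n}=\widehat{\Delta^{s/2}f_n}\to G:=\widehat{\Delta^{s/2}f_0}$ in $L^2$, so $G=|\cdot|^s\widehat{f_0}$ off the origin, and with $\supp\widehat{f_0}\subseteq[-a,a]$ this forces $\supp G\subseteq[-a,a]$. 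Put $g:=|\cdot|^{-s}G$; then $g\in L^2_a(|\xi|^{2s})$ and $\|g\|_{L^2_a(|\xi|^{2s})}=\|G\|_{L^2}=\|\Delta^{s/2}f_0\|_{L^2}=\|f\|_{PW^s_a}$. Let $\tilde f\in PW^s_a$ be the function produced from this $g$ by the converse implication; since $G_1=|\cdot|^s g\,\mathbf{1}_{[-a,a]}=G$, we get $\Delta^{s/2}\tilde f_0=\cF^{-1}(G)=\Delta^{s/2}f_0$, and injectivity of $\Delta^{s/2}$ on $E^{s,2}$ forces $f_0=\tilde f_0$, so $f=\tilde f$. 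The remaining assertions are then inherited from the converse implication: $\widehat{f_0}=\widehat{\tilde f_0}=Tg$ in $\cD'_c$, and $f(z)=\tilde f(z)=\la Tg,e^{iz(\cdot)}\ra=\la\widehat{f_0},e^{iz(\cdot)}\ra=\frac1{\sqrt{2\pi}}\int_{-a}^a g(\xi)(e^{iz\xi}-P_m(iz\xi))\,d\xi$, with $\|f\|_{PW^s_a}=\|g\|_{L^2_a(|\xi|^{2s})}$.

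\emph{Main obstacle.} The genuinely delicate point is the verification $f_0\in E^{s,2}$ in the converse implication — exhibiting the Schwartz approximants $f_n$ and correctly identifying $\Delta^{s/2}f_0$ with $\cF^{-1}(G_1)$ — which rests on both $P_{|\cdot|^s\psi;m;0}\equiv0$ and the validity of Theorem~\ref{real-hom-sob} in case~(ii). This is precisely where the hypothesis $s-\frac12\notin\bbN$ is essential: in the critical case $s-\frac12\in\bbN_0$ the relevant realization is of $\BMO$-type and this argument degenerates.
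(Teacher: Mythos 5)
Your proof is correct, and it takes a genuinely different route from the paper's in several places, mostly in how it threads the direct implication. The paper's proof of the direct part rests on two dedicated lemmas: Lemma~\ref{cpt-support}, a careful convolution argument with $\vp\in\cS_M\cap C^\infty_c$ that proves both $\supp\widehat{f_0}\subseteq[-a,a]$ \emph{and} the extra fact $\widehat{f_0}\big|_{\cP_m}=0$, and Lemma~\ref{f-not-hat-weighted-L2}, which uses that extra fact to kill the $Q(D)\delta$ ambiguity when producing $g$ with $\widehat{f_0}=Tg$. You sidestep both: you extract $g=|\cdot|^{-s}\widehat{\Delta^{s/2}f_0}$ directly from the Fourier side, localize it by the support of $\widehat{f_0}$, and then close the loop by feeding this $g$ back through the converse implication and invoking injectivity of $\Delta^{s/2}$ on $E^{s,2}$ (the paper instead compares $(m+1)$-th derivatives of $f_0$ and $\widetilde f_0$ and uses $P_{f;m;0}=P_{\widetilde f;m;0}=0$). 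Your route is tighter and avoids Lemma~\ref{f-not-hat-weighted-L2} altogether, at the price of compressing the support statement to ``Phragm\'en--Lindel\"of plus Paley--Wiener--Schwartz'': note that this is \emph{not} what the paper's proof of Theorem~\ref{main-1} does (that proof quotes Lemma~\ref{cpt-support}), and while the classical argument you gesture at is valid — $f\in\cE_a$ together with polynomial growth of $f_0$ on $\bbR$ does force the distributional bound $|f(z)|\lesssim(1+|z|)^N e^{(a+\eps)|\Im z|}$ for every $\eps>0$, hence $\supp\widehat{f_0}\subseteq[-a,a]$ — it deserves more than a clause, since the raw $\cE_a$ bound has an $\eps$ in the exponent and the upgrade via quarter-plane Phragm\'en--Lindel\"of is exactly the non-trivial content of Lemma~\ref{cpt-support}. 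In the converse, your choice of approximants $f_n=\cF^{-1}(|\cdot|^{-s}G_1^{(n)})$ with $G_1^{(n)}\in C^\infty_c(\bbR\setminus\{0\})$ is cleaner than the paper's $\vp_n$ built from Schwartz $\psi_n\to g$, and deducing $f_0\in E^{s,2}$ from the realization's uniqueness plus $P_{f_0;m;0}=0$ neatly replaces the paper's explicit H\"older estimate showing $f_0\in\dot{\Lambda}^{s-\frac12}$ — though it is worth noting that your argument still implicitly uses that the canonical representative $h=R[f_0]$ lies in $\dot{\Lambda}^{s-\frac12}$ and hence has a well-defined Taylor polynomial at $0$, which is what allows the cancellation $h-f_0=P_{h-f_0;m;0}=0$. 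Finally, your structural observation that $P_{|\cdot|^s\psi;m;0}\equiv0$ for every $\psi\in\cS$ is the same mechanism the paper isolates inside Lemma~\ref{Lemma3.3}; elevating it to the organizing principle of the proof is a nice touch.
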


Observe that in particular Theorem \ref{main-1} 
 says that, if $0<s<\frac12$, the Fourier transform
$\cF: PW_a^s\to L^2_a(|\xi|^{2s})$ is a surjective isomorphism, as in
the case $s=0$.  On the other hand, if $s>\frac12$, $\cF:PW_a^s\to
T(L^2_a(|\xi|^{2s}))$ is a surjective isomorphism, where
$T(L^2_a(|\xi|^{2s}))\subseteq{ \cD'_c}$ denotes 
the image of $L^2_a(|\xi|^{2s})$ via the operator $T$, endowed with
norm $\|Tg\|:=\|g\|_{L^2_a(|\xi|^{2s})}$.  
\ms

As a consequence of the above theorems we obtain that the spaces
$PW^s_a$ are reproducing kernel Hilbert spaces and we are able to make
some interesting remarks concerning reconstruction formulas and
sampling in $PW^s_a$ for $0<s<\frac12$. In particular, we obtain that
the spaces $PW^s_a$ are not de Branges spaces. We 
refer the reader to Section \ref{sampling} below for more details.  

Then we turn our attention to the fractional Bernstein spaces
$\cB_a^{s,p}$.

\begin{theorem}\label{main-3}
  Let $s>0$, $1<p<\infty$ be such that $s-\frac 1p\notin\bbN$.
Then, the fractional Bernstein spaces $\cB^{s,p}_a$ are Banach spaces and the
following Plancherel--P\'olya estimates 
hold. If $0<s<\frac1p$, for $f\in \cB^{s,p}_a$ and $y\in\bbR$ we have
$$
\|f (\cdot+iy)\|_{\cB^{s,p}_a} \le e^{a|y|} \| f\|_{\cB^{s,p}_a}  \,.
$$ 
If $s>\frac1p$ and $s-\frac 1p\notin\bbN_0$,
for $f\in \cB^{s,p}_a$ and $y\in\bbR$ given,  define $F(w)=
f(w+iy) - P_{f(\cdot+iy);m;0}(w)$, $w\in\bbC$.  Then, $F\in \cB^{s,p}_a$ and
$$
\|F \|_{\cB^{s,p}_a} \le e^{a|y|} \| f\|_{\cB^{s,p}_a}  \,.
$$ 
\end{theorem}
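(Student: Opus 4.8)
The plan is to reduce both assertions to the classical one-variable theory by means of the fractional Laplacian: $\Delta^{s/2}$ should map $\Bsp$ \emph{isometrically} into the ordinary Bernstein space $\cB^p_a$ and intertwine the complex translation $f\mapsto f(\cdot+iy)$, after which the classical Plancherel--P\'olya inequality for $\cB^p_a$ finishes the job.

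For completeness of $\Bsp$, recall from Theorem \ref{real-hom-sob} that $E^{s,p}$ is a Banach space, that it embeds continuously into $L^{p^*}$ when $0<s<\frac1p$ through the Sobolev inequality $\|u\|_{L^{p^*}}\lesssim\|\Delta^{s/2}u\|_{L^p}$, and that it embeds continuously into the homogeneous Lipschitz space $\dot{\Lambda}^{s-\frac1p}$ (on functions whose Taylor polynomial of order $m=\lfloor s-\frac1p\rfloor$ at $0$ vanishes) when $s>\frac1p$ through $\|u\|_{\dot{\Lambda}^{s-\frac1p}}\lesssim\|\Delta^{s/2}u\|_{L^p}$. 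Consequently $\Bsp$ embeds continuously into $\cB^{p^*}_a$, respectively into the analogous space of entire functions of exponential type $a$ with $\dot{\Lambda}^{s-\frac1p}$ restrictions, and these are complete. Thus a Cauchy sequence $\{f_n\}$ in $\Bsp$ converges, uniformly on compact subsets of $\bbC$, to an entire function $f$ of exponential type $a$; the passage from convergence of the $(f_n)_0$ on $\bbR$ to local uniform convergence on $\bbC$ is the usual normal families / Phragm\'en--Lindel\"of argument for functions of exponential type of polynomially controlled growth on the line. Since in addition $(f_n)_0\to f_0$ in $\cS'$ while $\{\Delta^{s/2}(f_n)_0\}$ is Cauchy in $L^p$, uniqueness of limits in $\cS'$ forces $\Delta^{s/2}f_0\in L^p$ and $\Delta^{s/2}(f_n)_0\to\Delta^{s/2}f_0$ in $L^p$; the normalization $P_{f_0;m;0}=0$ passes to the limit, so $f\in\Bsp$ and $\|f_n-f\|_{\Bsp}\to0$.

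The heart of the argument is the following \emph{key lemma}: for $f\in\Bsp$ the function $g:=\Delta^{s/2}f_0$ extends to an element of $\cB^p_a$ with $\|g\|_{\cB^p_a}=\|f\|_{\Bsp}$, and $\Delta^{s/2}[f(\cdot+iy)]=g(\cdot+iy)$ for every $y\in\bbR$. That $g\in\cB^p_a$ holds because $g\in L^p$ by definition while $\widehat g=|\xi|^s\widehat{f_0}$ is supported in $[-a,a]$ (indeed $\widehat{f_0}$ is, being the Fourier transform of the boundary values of an entire function of exponential type $a$ of at most polynomial growth, and $|\xi|^s$ is smooth off the origin), so the classical Bernstein characterization recalled in the Introduction applies; the delicate distributional product near $\xi=0$ is best treated by density, since a dilation-and-mollification argument shows that the $f\in\Bsp$ with $\widehat{f_0}\in C_c^\infty$ supported away from $\pm a$ and vanishing to high order at $0$ are dense, for such $f$ one has $g\in\cS\cap\cB^p_a$ outright, and $\cB^p_a$ is closed in $L^p$ while $\Delta^{s/2}\colon\Bsp\to L^p$ is an isometry. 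The intertwining identity is verified on the Fourier side: since $\widehat{f_0}$ has compact support, $\widehat{f(\cdot+iy)}=e^{-y\xi}\widehat{f_0}$, hence
\[
\widehat{\Delta^{s/2}[f(\cdot+iy)]}=|\xi|^s e^{-y\xi}\widehat{f_0}=e^{-y\xi}\widehat g=\widehat{g(\cdot+iy)}.
\]
When $0<s<\frac1p$ this already gives $f(\cdot+iy)\in\Bsp$ with $\|f(\cdot+iy)\|_{\Bsp}=\|g(\cdot+iy)\|_{L^p}$. When $s>\frac1p$, subtracting the Taylor polynomial $P_{f(\cdot+iy);m;0}$ changes neither the exponential type nor the class modulo $\cP_m$, hence leaves $\Delta^{s/2}$ untouched, while it produces the normalization $P_{F_0;m;0}=0$ required for membership in $\Bsp$; so $F\in\Bsp$ and $\|F\|_{\Bsp}=\|\Delta^{s/2}F_0\|_{L^p}=\|g(\cdot+iy)\|_{L^p}$.

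Granting the lemma, the classical Plancherel--P\'olya inequality applied to $g\in\cB^p_a$ yields $\|g(\cdot+iy)\|_{L^p}\le e^{a|y|}\|g\|_{L^p}=e^{a|y|}\|f\|_{\Bsp}$, which is exactly the asserted estimate in both ranges of $s$. (For a self-contained proof of the classical inequality one notes that $e^{iaz}g(z)$ is bounded in the upper half-plane --- $g$ being bounded on $\bbR$ by a Nikolskii-type inequality and of exponential type $a$, so that Phragm\'en--Lindel\"of gives $|g(x+iy)|\le\|g\|_\infty e^{a|y|}$ --- and then the $H^p$ theory of the half-plane gives $e^{-ay}\|g(\cdot+iy)\|_{L^p}\le\|g\|_{L^p}$ for $y>0$, and symmetrically for $y<0$.) The principal obstacle is the key lemma, and within it the rigorous commutation of the nonlocal operator $\Delta^{s/2}$ with the complex translation inside the realization framework: justifying the Fourier-side identities near $\xi=0$, where $|\xi|^s$ has only finitely many derivatives and $\widehat{f_0}$ may carry derivatives of $\delta_0$ up to order $m$; keeping precise track of the Taylor-polynomial corrections so that the translated and renormalized function really lies in $\Bsp$ with the claimed norm; and verifying that $\Delta^{s/2}f_0$ is the restriction of a genuine entire function of exponential type $a$ rather than merely an $L^p$ function.
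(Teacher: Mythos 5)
Your proposal is correct and follows essentially the same route as the paper's proof: both reduce the Plancherel--P\'olya estimate for $\Bsp$ to the classical one for $\cB^p_a$ via the surjective isometry $\Delta^{s/2}\colon\Bsp\to\cB^p_a$ (the paper's Theorem~\ref{banach-isometry}), verify that this isometry intertwines the complex translation $f\mapsto f(\cdot+iy)$ modulo the Taylor-polynomial renormalization needed when $s>\frac1p$, and pass to general $f$ by density of the Schwartz-restriction subclass. The only cosmetic difference is that you verify the intertwining on the Fourier side using $\Delta^{s/2}$ directly, whereas the paper manipulates the explicit integral formula for $(\Delta^{s/2})^{-1}$ applied to $h\in\cT$, which sidesteps the distributional delicacies near $\xi=0$ that you flag.
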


\begin{theorem}\label{main-4} 
Let $s>0$ and $1<p<\infty$ such that $s-\frac1p\notin\bbN_0$. Given a function $h$ on the real line, the
following conditions are equivalent. 
\begin{itemize}
\item[(i)] The function $h$ is the restriction of an
entire function $f\in \Bsp$ to the real line, that is, $h=f_0$; 
\item[(ii)] $h \in E^{s,p}$ and $\supp\widehat h \subseteq [-a,a]$;
\item[(iii)]
$h\in C^\infty $ and it is such that  $h^{(n)}\in E^{s,p}$ for all
$n\in\bbN_0$ and $\|h^{(n)}\|_{E^{s,p}} \le a^n \|h\|_{E^{s,p}} $.
\end{itemize}
\end{theorem}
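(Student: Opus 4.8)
The strategy is to reduce all three equivalences to the classical Paley--Wiener characterization of the ordinary Bernstein spaces $\cB^p_a$ recalled above, transported through the fractional Laplacian. Two elementary facts carry most of the weight. First, $\Delta^{s/2}$ and differentiation are Fourier multipliers and hence commute: for $h$ smooth of at most polynomial growth one has $\Delta^{s/2}(h^{(n)})=(\Delta^{s/2}h)^{(n)}$ in $\cS'$ for every $n\in\bbN_0$. Second, since $m=\lfloor s-\frac1p\rfloor<s$, the multiplier $|\xi|^s$ vanishes together with all its derivatives up to order $m$ at the origin, so $|\xi|^s\delta_0^{(k)}=0$ for $k\le m$ and therefore $\Delta^{s/2}$ annihilates $\cP_m$. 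Consequently $\|\Delta^{s/2}h^{(n)}\|_{L^p}$ does not change if one subtracts from $h^{(n)}$ its Taylor polynomial $P_{h^{(n)};m;0}$, so the membership ``$h^{(n)}\in E^{s,p}$'' and the norm $\|h^{(n)}\|_{E^{s,p}}$ in (iii) are to be read for the canonical realization $h^{(n)}-P_{h^{(n)};m;0}$ and are insensitive to this normalization. Together with the injectivity of $\Delta^{s/2}$ on $E^{s,p}$ and the surjectivity discussed below, these remarks identify $\Delta^{s/2}$ with an isometric isomorphism of $\Bsp$ onto $\cB^p_a$, which is really the content of the theorem.

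For (i) $\Leftrightarrow$ (ii) I would invoke the (classical) Paley--Wiener--Schwartz theorem. Every realization space $E^{s,p}$ of Theorem \ref{real-hom-sob} embeds continuously into $\cS'$ (its elements have at most polynomial growth), so a function $h\in E^{s,p}$ which is the restriction of some $f\in\cE_a$ is a tempered distribution that extends to an entire function of exponential type $a$; hence $\supp\widehat h\subseteq[-a,a]$, which is (ii). Conversely, if $h\in E^{s,p}$ and $\supp\widehat h\subseteq[-a,a]$, then $h$ has compactly supported Fourier transform and so extends to an entire function $f$ of exponential type $a$; since membership in $E^{s,p}$ already forces $P_{h;m;0}=0$ (recall $s-\frac1p\notin\bbN_0$), this gives $f\in\Bsp$ with $h=f_0$, i.e. (i).

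The implications (i)/(ii) $\Rightarrow$ (iii) and (iii) $\Rightarrow$ (ii) then both follow by applying to $g:=\Delta^{s/2}h$ the characterization of $\cB^p_a$ recalled above. If $h=f_0$ with $f\in\Bsp$, then $g\in L^p$ and, by (ii), $\widehat g=|\xi|^s\widehat h$ is supported in $[-a,a]$, so $g\in\cB^p_a$; condition (iii) there yields $g^{(n)}\in L^p$ with $\|g^{(n)}\|_{L^p}\le a^n\|g\|_{L^p}$, while the commutation relation gives $\Delta^{s/2}(h^{(n)})=g^{(n)}$. Placing the band-limited distribution $h^{(n)}$ into $E^{s,p}$ by Theorem \ref{real-hom-sob} (and, when $0<s<\frac1p$, noting $h^{(n)}=I_s g^{(n)}\in L^{p^*}$ by the Hardy--Littlewood--Sobolev inequality), one gets $\|h^{(n)}\|_{E^{s,p}}=\|g^{(n)}\|_{L^p}\le a^n\|h\|_{E^{s,p}}$, which is (iii). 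Conversely, if (iii) holds, then $g^{(n)}=\Delta^{s/2}(h^{(n)})\in L^p$ for all $n$ with $\|g^{(n)}\|_{L^p}\le a^n\|g\|_{L^p}$, so $g$ is smooth and satisfies condition (iii) of the classical theorem; hence $g\in\cB^p_a$ and $\supp\widehat g\subseteq[-a,a]$. Since $\widehat h$ agrees with $|\xi|^{-s}\widehat g$ away from the origin, $\supp\widehat h\subseteq\{0\}\cup\supp\widehat g\subseteq[-a,a]$, which with $h\in E^{s,p}$ (the case $n=0$) is (ii).

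The genuinely non-formal ingredient, and the step I expect to require the most care, is the two-way passage between ``$h$ band-limited with $\Delta^{s/2}h\in L^p$'' and ``$h\in E^{s,p}$'': in the forward direction this rests on the homogeneous Sobolev and Lipschitz regularity of Theorem \ref{real-hom-sob} (the embedding $E^{s,p}\subseteq L^{p^*}$ when $0<s<\frac1p$, and $E^{s,p}\subseteq\dot{\Lambda}^{s-\frac1p}$ when $s-\frac1p\in\bbR^+\setminus\bbN$) together with the realization criterion of \cite{MPS-sob}; in the reverse direction one must check that $\Delta^{s/2}$ is onto $\cB^p_a$, i.e. that any $g\in\cB^p_a$ has a preimage in $E^{s,p}$ which, by the support computation above, is automatically band-limited and so lies in $\Bsp$. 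The bookkeeping of the Taylor polynomial normalization (relevant only when $m\ge1$) is harmless for the norms precisely because $\Delta^{s/2}$ kills $\cP_m$, as noted at the outset; everything else is a transcription of the classical Bernstein theorem through the isometry $\Delta^{s/2}$, and in particular the Bernstein inequality in (iii) needs no new estimate.
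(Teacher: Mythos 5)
Your proof is correct and follows essentially the same route as the paper: both transport the classical Bernstein characterization through the isometry $\Delta^{s/2}:\Bsp\to\cB^p_a$ (the paper cites this as Theorem \ref{banach-isometry}, which also already contains the support inclusion $\supp\widehat{f_0}\subseteq[-a,a]$ for $f\in\Bsp$, the surjectivity of $\Delta^{s/2}$, and its injectivity on $E^{s,p}$). Two small remarks. First, for (i)$\Rightarrow$(ii) you invoke a distributional Paley--Wiener--Schwartz theorem; that is a valid citation, but note it silently requires a Phragm\'en--Lindel\"of step to upgrade the $\cE_a$ bound $|f(z)|\le C_\epsilon e^{(a+\epsilon)|z|}$ plus polynomial growth of $f_0$ to the tube-domain estimate $|f(x+iy)|\le C(1+|z|)^N e^{(a+\epsilon)|y|}$; the paper instead obtains the support inclusion by convolving with test functions in $\cS_M\cap C^\infty_c$ and using Riesz potential estimates in Hardy spaces (Lemma \ref{cpt-support} and its $L^p$ analogue inside the proof of Theorem \ref{banach-isometry}). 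Second, you correctly flag a point the paper leaves implicit in (iii): when $m\ge1$ the derivative $h^{(n)}$ need not satisfy $P_{h^{(n)};m;0}=0$, so ``$h^{(n)}\in E^{s,p}$'' must be read modulo $\cP_m$ (equivalently for the realization $h^{(n)}-P_{h^{(n)};m;0}$); this costs nothing for the norms since $\Delta^{s/2}$ annihilates $\cP_m$.
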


Finally, the spaces $PW^s_a$ are closed subspaces of the Hilbert spaces
$E^{s,2}$, and thus there exists a Hilbert space projection operator
$\bP_s: E^{s,2}\to PW^s_a$. It is natural to study the mapping property of the
operator $\bP_s$ with respect to the $L^p$ norm.
We prove the following result.   

 \begin{theorem}\label{main-5}
 Let $s>0$ and $1<p<\infty$ such that $s-\frac12\notin\bbN_0$,
 $s-\frac1p\notin\bbN_0$ and $\lfloor s-\frac12\rfloor=\lfloor s-\frac
 1p\rfloor$. Then, the Hilbert space projection operator $\bP_s:
 E^{s,2}\to PW^s_a$ 
  densely defined on $E^{s,p}\cap E^{s,2}$ extends to a bounded
  operator $\bP_s:E^{s,p}\to\Bsp$ for all $s>0$ and
  $1<p<+\infty$. 
 \end{theorem}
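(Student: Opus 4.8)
The plan is to conjugate $\bP_s$ by the fractional Laplacian and to recognise it as the classical band‑limiting projection, whose $L^p$‑boundedness is elementary. Recall from Theorem~\ref{real-hom-sob} and \cite{MPS-sob} that for every $q\in(1,\infty)$ the fractional Laplacian is a surjective linear isometry $\Delta^{s/2}\colon E^{s,q}\to L^q$ — it is isometric by the definition of the norm on $E^{s,q}$, injective by the remark following Theorem~\ref{real-hom-sob}, and surjective since $\Delta^{s/2}\cS$ is dense in $L^q$ — and that the hypothesis $\lfloor s-\tfrac12\rfloor=\lfloor s-\tfrac1p\rfloor=:m$ makes both $E^{s,2}$ and $E^{s,p}$ realisations modulo the same space $\cP_m$, so that $\Delta^{s/2}$ and the realisations are unambiguous on $E^{s,p}\cap E^{s,2}$. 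The core of the proof is the identity
\begin{equation*}
 \Delta^{s/2}\circ\bP_s=\bP_0\circ\Delta^{s/2}\qquad\text{on }E^{s,2},
\end{equation*}
where $\bP_0\colon L^2\to PW_a$ is the classical orthogonal projection, i.e.\ the Fourier multiplier with symbol $\mathbf 1_{[-a,a]}$.

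To establish it, I would first show that $\Delta^{s/2}$ carries $PW^s_a$ bijectively and isometrically onto $PW_a\subseteq L^2$. For $f\in PW^s_a$, Theorems~\ref{main-1} and \ref{main-2} give $g\in L^2_a(|\xi|^{2s})$ with $\widehat{f_0}=g$ (if $0<s<\tfrac12$) or $\widehat{f_0}=Tg$ in $\cD'_c$ (if $s>\tfrac12$); since $m<s$, multiplication by $|\xi|^s$ annihilates every polynomial of degree $\le m$ and hence the regularisation inside $T$, so that $\widehat{\Delta^{s/2}f_0}=|\xi|^sg$, a genuine $L^2$ function supported in $[-a,a]$. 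Thus $\Delta^{s/2}f\in PW_a$ and $\|\Delta^{s/2}f\|_{L^2}=\||\xi|^sg\|_{L^2}=\|g\|_{L^2_a(|\xi|^{2s})}=\|f\|_{PW^s_a}$, and the map $h\mapsto$ the element of $PW^s_a$ with transform $T(|\xi|^{-s}\widehat h)$ (respectively $|\xi|^{-s}\widehat h$) is a two‑sided inverse; hence $\Delta^{s/2}|_{PW^s_a}\colon PW^s_a\to PW_a$ is a surjective isometry. Consequently $\Delta^{s/2}\colon E^{s,2}\to L^2$ is a unitary carrying the closed subspace $PW^s_a$ onto the closed subspace $PW_a$, and so it intertwines the corresponding orthogonal projections, which is precisely the displayed identity. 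Finally $\mathbf 1_{[-a,a]}(\xi)=\tfrac12\bigl(\sgn(\xi+a)-\sgn(\xi-a)\bigr)$, and multiplication by $\sgn(\xi\mp a)$ on the Fourier side is a modulation conjugate of the Hilbert transform, so $\bP_0$ extends to a bounded operator on $L^p$ for every $p\in(1,\infty)$.

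With this identity available one \emph{defines}, for $f\in E^{s,p}$,
\begin{equation*}
 \bP_s f:=(\Delta^{s/2})^{-1}\bigl(\bP_0\,\Delta^{s/2}f\bigr),
\end{equation*}
which is legitimate because $\Delta^{s/2}\colon E^{s,p}\to L^p$ is a bijective isometry, whence $\|\bP_s f\|_{E^{s,p}}=\|\bP_0\,\Delta^{s/2}f\|_{L^p}\le C_p\|f\|_{E^{s,p}}$. Since $\bP_0\,\Delta^{s/2}f\in L^p$ has Fourier transform supported in $[-a,a]$, so does $\bP_s f\in E^{s,p}$, and therefore $\bP_s f\in\Bsp$ by the implication (ii)$\Rightarrow$(i) of Theorem~\ref{main-4} (the condition $P_{(\bP_s f)_0;m;0}=0$ being built into $E^{s,p}$); thus $\bP_s\colon E^{s,p}\to\Bsp$ is bounded. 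On $E^{s,p}\cap E^{s,2}$ it agrees with the Hilbert‑space projection: by the core identity the two versions of $\bP_s f$ have the same fractional Laplacian, hence their difference $D$ is a polynomial with vanishing Taylor polynomial of order $m$ at the origin; but $D$ is a difference of an element of the realisation $E^{s,2}$ and of the realisation $E^{s,p}$, so its modulus of smoothness of order $m+1$ is controlled by $t^{s-1/2}+t^{s-1/p}$ as $t\to+\infty$ (or $D$ lies in an $L^q$ space when $0<s<\tfrac1p$), while a nonzero polynomial with vanishing Taylor polynomial of order $m$ would have modulus of smoothness of order $m+1$ growing at least like $t^{m+1}$, and $m+1>\max(s-\tfrac12,s-\tfrac1p)$; so $D=0$. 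Here the hypothesis $\lfloor s-\tfrac12\rfloor=\lfloor s-\tfrac1p\rfloor$ enters exactly to make both contributions measured by differences of the same order $m+1$. Finally $E^{s,p}\cap E^{s,2}$ is dense in $E^{s,p}$: it contains $\{\phi-P_{\phi;m;0}:\phi\in\cS\}$, which is dense in $E^{s,p}$ by Theorem~\ref{real-hom-sob}, and which lies in $E^{s,2}$ since $\Delta^{s/2}\phi\in L^2$ for every $\phi\in\cS$; hence the densely defined Hilbert projection admits the bounded extension just constructed.

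The main obstacle is the core identity, and inside it the claim $\widehat{\Delta^{s/2}f_0}=|\xi|^sg$ for $s>\tfrac12$ — that is, pinning down the action of $\Delta^{s/2}$ on the distribution $Tg$ — for which the inequality $m<s$ is precisely what makes multiplication by $|\xi|^s$ cancel the polynomial regularisation in $T$. The remaining steps (the norm estimate, membership in $\Bsp$, and the consistency on the dense subspace) are bookkeeping with the realisation spaces, where the additional assumption $\lfloor s-\tfrac12\rfloor=\lfloor s-\tfrac1p\rfloor$ is needed to keep $E^{s,2}$ and $E^{s,p}$ compatible.
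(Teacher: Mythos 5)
Your proof is correct and takes essentially the same route as the paper: both reduce matters to the $L^p$-boundedness of the band-limiting multiplier $\chi_{[-a,a]}$ by establishing the intertwining $\Delta^{s/2}\bP_s=\bP_0\Delta^{s/2}$, the paper doing this concretely on Schwartz functions (splitting off the polynomial part, which $\Delta^{s/2}$ annihilates) while you derive it abstractly from the unitary equivalence $\Delta^{s/2}\colon E^{s,2}\to L^2$ carrying $PW^s_a$ onto $PW_a$. One small simplification: the modulus-of-smoothness argument in your consistency check is unnecessary, since $\Delta^{s/2}D=0$ already forces the polynomial $D$ into $\cP_m$ (its Fourier transform annihilates $\widehat{\cS_\infty}$ and is hence supported at the origin, and the hypothesis $\lfloor s-\tfrac12\rfloor=\lfloor s-\tfrac1p\rfloor$ pins the degree), after which the normalization $P_{\cdot;m;0}=0$ common to both realizations gives $D=0$.
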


 \smallskip

The paper is organized as follows. After recalling some preliminary
results in Section \ref{pre}, we prove Theorem \ref{main-1} and
\ref{main-2} in Section \ref{PW-sec}. In
Section \ref{Bsp} we investigate the fractional Bernstein spaces
proving Theorems \ref{main-3} and \ref{main-4}, whereas in Section
\ref{sampling} we shortly discuss the 
sampling problem for the fractional Paley--Wiener spaces. Finally, we prove
prove Theorem \ref{main-5} in Section \ref{bdd-projection}, and
conclude with further remarks and open questions in Section
\ref{Final-Sect}.
\ms

\section{Preliminaries}\label{pre}

In this section we recall some results of harmonic analysis we will
need in the remaining of the paper.  We omit the proofs of the results
and we refer the reader, for instance, to \cite{Stein}. 
We do not recall the results in their full generality, but only in the
version we need them.  

Let $0<s<1$ so that the function $\xi\to |\xi|^{-s}$ is locally
integrable. Then, the Riesz potential operator $\cI_s$ is
defined on $\cS$ as  
\begin{equation}\label{riesz-defn}
 \cI_s f =\cF^{-1}(|\cdot|^{-s}\widehat f\,).
\end{equation}
Observe that if
$f\in\cS$ and $0<s<1$, then
$
f= \cI_s \Delta^{\frac s2}f = \Delta^{\frac s2} \cI_s f$. 

For $p\in(0,\infty)$ we denote by $H^p$,
the  Hardy space on $\bbR$.  Having fixed $\Phi\in\cS$ with
$\int\Phi=1$, then
\begin{equation}\label{Hardy-sp-def}
H^p
=\big\{ f\in\cS':\, f^*(x):=\sup_{t>0} |f*\Phi_t(x)| \in L^p
\big\} \,,
\end{equation}
where
$$
\|f\|_{H^p}= \|f^*\|_{L^p}\,.
$$
We recall that the definition of $H^p$ is independent of the choice of
$\Phi$ and that, when
$p\in(1,\infty)$, $H^p$ coincides with $L^p$, with equivalence of
norms. 

The Riesz potential operator extends to a bounded operator on $H^p$,
$0<p<\infty$, 
according to the following theorem.  Part (ii) is due to Adams, see
\cite{Adams}.

\begin{thm}\label{riesz-potential}
 Let  $0<s<1$, $0<p<\infty$.

{\rm (i)} If $s<\frac 1p$ and 
 $
 \frac{1}{p^*}=\frac 1p- s$,
 then, $\cI_s$ extends to a bounded operator
 $\cI_s:H^p\to H^{p^*}$.

{\rm (ii)} If 
$s=\frac 1p$, then, $\cI_s$ extends to a bounded operator
$\cI_s: L^p\to \BMO$. 
\end{thm}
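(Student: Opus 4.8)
\emph{Part (i).} For $p>1$ one has $H^p=L^p$ and also $H^{p^*}=L^{p^*}$ (since $1/p^*=1/p-s<1$), so the assertion is the classical Hardy--Littlewood--Sobolev inequality; I would cite \cite{Stein}, or deduce it from Hedberg's pointwise bound $|\cI_s f(x)|\le C(Mf(x))^{1-sp}\|f\|_{L^p}^{sp}$ together with the maximal theorem. The content is the range $0<p\le 1$, which I would handle by the atom-to-molecule method. Take an atomic decomposition $f=\sum_j\lambda_j a_j$ of $f\in H^p$, with $\sum_j|\lambda_j|^p\le C\|f\|_{H^p}^p$ and each $a_j$ a $(p,\infty,N)$-atom: supported in an interval of length $r_j$, with $\|a_j\|_\infty\le r_j^{-1/p}$ and vanishing moments up to order $N$, where $N$ is a fixed integer $\ge\lfloor 1/p-1\rfloor$ to be chosen large. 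The key step is to show that $\cI_s a$ is, uniformly over atoms $a$ on $Q=Q(x_0,r)$, a fixed multiple of an $H^{p^*}$-molecule. Writing $\cI_s a(x)=c_s\int a(y)|x-y|^{s-1}\,dy$: for $|x-x_0|\le 2r$ one estimates directly to get $|\cI_s a(x)|\le C r^{-1/p^*}$; for $|x-x_0|>2r$ one subtracts from $y\mapsto|x-y|^{s-1}$ its degree-$N$ Taylor polynomial at $x_0$ (annihilated against $a$ by the moments) and bounds the remainder, obtaining $|\cI_s a(x)|\le C r^{-1/p^*}\big(r/|x-x_0|\big)^{N+2-s}$; and from $\widehat{\cI_s a}(\xi)=|\xi|^{-s}\widehat a(\xi)$ together with the vanishing of $\widehat a$ to order $N+1$ at the origin one reads off the moment conditions $\int x^k\cI_s a\,dx=0$ for $0\le k\le\lfloor 1/p^*-1\rfloor$. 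Choosing $N$ large (depending only on $p,s$) so that $N+2-s$ exceeds the decay threshold in the molecular characterisation of $H^{p^*}$, this yields $\|\cI_s a\|_{H^{p^*}}\le C$ uniformly.

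\smallskip

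It remains to sum over $j$. If $p^*\le 1$, then $H^{p^*}$ is $p^*$-normed and $\ell^p\subseteq\ell^{p^*}$ (as $p\le p^*$), so for $f$ a finite sum of atoms
\[
\|\cI_s f\|_{H^{p^*}}^{p^*}\le\sum_j|\lambda_j|^{p^*}\|\cI_s a_j\|_{H^{p^*}}^{p^*}\le C\sum_j|\lambda_j|^{p^*}\le C\Big(\sum_j|\lambda_j|^p\Big)^{p^*/p}\le C\|f\|_{H^p}^{p^*},
\]
and one extends by density (finite sums of atoms are dense in $H^p$, and $\cI_s$ is unambiguously defined on them). If $p^*>1$ the target norm is only sub-additive and $\sum_j|\lambda_j|$ may diverge; the remedy is complex interpolation of Hardy spaces: picking $p_0<\min(p,\tfrac1{1+s})$ (so $p_0^*<1$, the case just done) and $q\in(\max(p,1),\tfrac1s)$ (so $q>1$ and $H^q=L^q$, the classical case), one has $1/q<1/p<1/p_0$, hence $H^p=[H^{p_0},H^q]_\theta$ for a suitable $\theta$, and since $1/p_i^*=1/p_i-s$ the same $\theta$ gives $H^{p^*}=[H^{p_0^*},H^{q^*}]_\theta$; boundedness of $\cI_s$ on the two endpoint pairs then transfers to $(H^p,H^{p^*})$.

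\smallskip

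\emph{Part (ii).} Here $s=\tfrac1p$ (forcing $p>1$) and the result is Adams' endpoint theorem \cite{Adams}. Since $\BMO$ is a Banach space modulo constants and $\cS$ is dense in $L^p$, it suffices to prove $\|\cI_s\vp\|_{\BMO}\le C\|\vp\|_{L^p}$ for $\vp\in\cS$, where $\cI_s\vp(x)=c_s\int\vp(y)|x-y|^{s-1}\,dy$. Fix an interval $Q$ of length $r$ centred at $x_Q$, split $\vp=\vp\mathbf{1}_{2Q}+\vp\mathbf{1}_{(2Q)^c}=:\vp_1+\vp_2$, and use $c_Q:=\cI_s\vp_2(x_Q)$ as the normalising constant. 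For the near part, one integrates in $x$ over $Q$ \emph{first} and uses $\int_Q|x-y|^{s-1}\,dx\le C r^s$ (valid since $s<1$), getting $\frac1{|Q|}\int_Q|\cI_s\vp_1|\le C r^{s-1}\int_{2Q}|\vp|\le C r^{s-1/p}\|\vp\|_{L^p}=C\|\vp\|_{L^p}$. For the far part, a mean-value estimate gives $\big||x-y|^{s-1}-|x_Q-y|^{s-1}\big|\le C r\,|x_Q-y|^{s-2}$ for $x\in Q$, $y\in(2Q)^c$, and H\"older's inequality with $\int_{(2Q)^c}|x_Q-y|^{(s-2)p'}\,dy\le C r^{(s-2)p'+1}=C r^{-p'}$ --- the exponent being $-p'<0$ \emph{precisely} because $s=\tfrac1p$ --- gives $|\cI_s\vp_2(x)-c_Q|\le C\|\vp\|_{L^p}$ for every $x\in Q$. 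Adding the two bounds shows $\frac1{|Q|}\int_Q|\cI_s\vp-c_Q|\,dx\le C\|\vp\|_{L^p}$ uniformly in $Q$, i.e. $\|\cI_s\vp\|_{\BMO}\le C\|\vp\|_{L^p}$, and density concludes.

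\smallskip

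The genuine obstacle is the endpoint exponent in part (ii): at $s=\tfrac1p$ the naive pointwise H\"older estimate on $\cI_s\vp_1$ is logarithmically divergent (one faces $|x-y|^{(s-1)p'}=|x-y|^{-1}$), and the point of Adams' argument is to average in $x$ over $Q$ \emph{before} applying H\"older, so that only the integrability of $|x-y|^{s-1}$ over $Q$ is used --- which does survive the endpoint. In part (i) the one delicate piece of bookkeeping is the passage from the uniform bound on atoms to a bound on all of $H^p$ when $p^*>1$, handled by the interpolation step above; the molecular estimates themselves are a routine Taylor expansion of the Riesz kernel.
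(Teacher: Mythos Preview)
Your argument is sound, but you should know that the paper does not actually prove this theorem: it is stated in the Preliminaries section as a known result, with the proof explicitly omitted and references given to \cite{Stein} for part (i) and to Adams \cite{Adams} for part (ii). So there is no paper proof to compare against.

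That said, your sketch is the standard route and is correct. For part (i) the atom--to--molecule mechanism is exactly how one extends Hardy--Littlewood--Sobolev below $p=1$; your local and far estimates on $\cI_s a$ are right, and you correctly flag the one nontrivial bookkeeping issue (summing when $p^*>1$) and resolve it via complex interpolation of Hardy spaces. One small point worth tightening: the claim that $\widehat{\cI_s a}(\xi)=|\xi|^{-s}\widehat a(\xi)$ ``reads off'' the moment conditions is slightly glib, since $|\xi|^{-s}$ is not smooth at the origin and the higher moments of $\cI_s a$ are not absolutely convergent integrals; in practice one checks that the size--plus--cancellation package in the molecular definition is satisfied directly, or chooses $N$ large enough that the molecular moments needed are strictly fewer than $N+1-s$ and appeals to the fact that $|\xi|^{-s}\widehat a(\xi)$ has $k$-th derivative vanishing at $0$ for $k<N+1-s$. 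For part (ii) your near/far split with the constant $c_Q=\cI_s\vp_2(x_Q)$ is Adams' argument verbatim, and your arithmetic $(s-2)p'+1=-p'$ at $s=1/p$ is correct.
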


\begin{defn}\label{Sm-Sinfty}{\rm For $M$ a nonnegative integer, define
$$ 
\cS_M =\bigg\{f\in\cS: \int_{\bbR} x^k f(x)\, dx=0
\textrm{ for  }k\in\bbN_0,\ k\le M\bigg\}
$$
and
\begin{equation*}
 \cS_{\infty} =\bigg\{f\in\cS: \int_{\bbR} x^k
 f(x)\, dx=0 \textrm{ for all }k\in\bbN_0\bigg\} \,.
 \end{equation*}
}
\end{defn}

We recall that, as it is elementary to verify, $\vp\in\cS_M$, if and
only if there exists $\Phi\in\cS$ such that $\vp=\Phi^{(M)}$.  We will
use this fact several times.   We also recall that 
$\cS_\infty$ is dense in $H^p$ for all $p\in(0,\infty)$, see
\cite[Ch.II,5.2]{Stein}.
For these and other
properties of the Hardy spaces see e.g. \cite{Stein} or
\cite{Grafakos}.

Notice that the Riesz potential operator $\cI_s$ is also
well-defined on $\cS_\infty$ for any $s\ge0$, since if
$f\in\cS_\infty$, then $\widehat f$ vanishes of infinite order
at the origin. Moreover, for all $s>0$
\begin{equation}\label{Delta-s-S-infty}
\cI_s , \Delta^{\frac s2}: \cS_\infty\to \cS_\infty
\end{equation}
are both surjective bounded isomorphisms and in fact one the inverse of the
other one; see  e.g.
\cite[Chapter 1]{Grafakos}.
\ms

\section{Fractional Paley--Wiener spaces}\label{PW-sec}

In this section we first prove
Theorems \ref{main-1} and \ref{main-2}, we deduce that the space
$PW^s_a$ is a reproducing kernel Hilbert space for every $s>0, s-\frac12\notin\bbN_0$,  and 
explicitly compute 
its reproducing kernel. We conclude this section by proving that the
classical Paley--Wiener space $PW_a$ and $PW^s_a$ are actually
isometric. 
\ms

\begin{lem}\label{cpt-support}
Let $f\in  PW^s_a$, $s>0$. Then, $\supp\widehat f_0\subseteq [-a,a]$, so that
$\widehat f_0
\in \cD'_c$, and
$\big(\widehat f_0\big)_{|_{\cP_m}}=0$, where  $m=\lfloor s-\frac12\rfloor$.
\end{lem}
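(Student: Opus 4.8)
The plan is to establish the two assertions in turn: $\supp\widehat{f_0}\subseteq[-a,a]$ (which in particular gives $\widehat{f_0}\in\cD'_c$), and then that $\widehat{f_0}$ annihilates $\cP_m$.

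For the support statement I would first record that $f_0\in E^{s,2}\subseteq\cS'$, so $\widehat{f_0}$ is a well-defined tempered distribution. When $0<s<\frac12$ one has $E^{s,2}\subseteq L^{2^*}$ with $2^*\in(2,\infty)$, hence $f\in\cB^{2^*}_a$, and $\supp\widehat{f_0}\subseteq[-a,a]$ is exactly the implication (i)$\Rightarrow$(ii) in the Paley--Wiener characterization of $\cB^{2^*}_a$ recalled above; moreover $m=\lfloor s-\frac12\rfloor=-1$, so the assertion about $\cP_m$ is empty. When $s>\frac12$, from $f_0\in E^{s,2}\subseteq\dot\Lambda^{s-\frac12}$ together with the normalization $P_{f_0;m;0}=0$ built into the definition of $PW^s_a$, and writing $s-\frac12=m+\theta$ with $\theta\in(0,1)$, repeated integration from the origin of the Hölder bound on $f_0^{(m)}$ gives $|f_0(x)|\le C(1+|x|)^{s-\frac12}$ on $\bbR$. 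I would then upgrade the exponential-type bound on $f$ to
\[
|f(z)|\le C_\eps(1+|z|)^N e^{(a+\eps)|\Im z|}\qquad(z\in\bbC)
\]
for every $\eps>0$ and a suitable integer $N$. This is a Phragmén--Lindelöf argument applied to $h(z)=f(z)e^{i(a+\eps)z}$: on the two real semi-axes $|h|\le C(1+|z|)^N$, on the positive imaginary semi-axis $|h(iy)|\le C_\eps e^{-\eps y/2}$, and $|h(z)|\le C_\eps e^{(a+\eps/2)|z|}$ for $\Im z\ge0$; dividing by $(z+i)^N$ and applying Phragmén--Lindelöf on each of the two quadrants making up the upper half-plane (opening $\pi/2$, so order-one growth is innocuous) gives $|h(z)|\le C(1+|z|)^N$ there, which is the claimed bound for $\Im z\ge0$, and the lower half-plane is handled symmetrically with $e^{-i(a+\eps)z}$. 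With this estimate the easy direction of the Paley--Wiener--Schwartz theorem yields $\supp\widehat{f_0}\subseteq[-(a+\eps),a+\eps]$ for all $\eps>0$, hence $\supp\widehat{f_0}\subseteq[-a,a]$ and $\widehat{f_0}\in\cD'_c$. (One can bypass Paley--Wiener--Schwartz: for $\varphi\in\cS$ with $\supp\varphi$ a compact subset of $\{|\xi|>a\}$, write $\la\widehat{f_0},\varphi\ra=\int_\bbR f(x)\widehat\varphi(x)\,dx$ and push the line of integration to $\bbR+iy$ as $|y|\to\infty$, the integral vanishing by the bound on $f$ above and the exponential decay of the entire extension of $\widehat\varphi$.)

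For the second assertion, once $\widehat{f_0}$ is known to have compact support it pairs with every $C^\infty$ function, and $f_0=\cF^{-1}\widehat{f_0}$ is represented by the smooth function $x\mapsto\frac1{\sqrt{2\pi}}\la\widehat{f_0},e^{ix(\cdot)}\ra$; differentiating under the bracket gives $f_0^{(k)}(0)=c_k\la\widehat{f_0},\xi^k\ra$ with $c_k\neq0$ for every $k\in\bbN_0$. Since $P_{f_0;m;0}=0$ when $s\ge\frac12$ (again from the definition of $PW^s_a$), we get $f_0^{(k)}(0)=0$, hence $\la\widehat{f_0},\xi^k\ra=0$, for $0\le k\le m$, i.e. $\big(\widehat{f_0}\big)_{|_{\cP_m}}=0$; for $0<s<\frac12$ this is empty, as already noted.

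The hard part is the Phragmén--Lindelöf upgrade of the exponential-type estimate to the polynomial-times-exponential bound needed to run Paley--Wiener--Schwartz (equivalently, justifying the contour shift in the direct argument); once that is done the rest is a matter of unwinding the definitions of $E^{s,2}$ and $PW^s_a$ and using standard facts about functions of exponential type.
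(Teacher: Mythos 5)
Your proof is correct, but it takes a genuinely different route from the paper's on both halves. For $\supp\widehat{f_0}\subseteq[-a,a]$, the paper handles all $s>0$ uniformly: it convolves $f$ with an arbitrary $\vp\in\cS_M\cap C^\infty_c$, uses the Hardy-space mapping property of the Riesz potential (Theorem \ref{riesz-potential}) to show $f_0*\vp=\Delta^{\frac s2}f_0*\cI_s\vp\in L^2$, concludes $f_\vp\in PW_a$, and deduces the support claim from $\supp(\widehat{f_0}\,\widehat\vp)\subseteq[-a,a]$ by varying $\vp$. You instead split into cases: for $0<s<\frac12$ the embedding $E^{s,2}\subseteq L^{2^*}$ places $f$ in $\cB^{2^*}_a$ and the classical Bernstein-space Paley--Wiener characterization does the work; for $s>\frac12$ you extract the polynomial bound $|f_0(x)|\le C(1+|x|)^{s-\frac12}$ from $f_0\in\dot\Lambda^{s-\frac12}$ together with $P_{f_0;m;0}=0$, upgrade the type estimate via Phragm\'en--Lindel\"of on quadrants to the Paley--Wiener--Schwartz growth condition, and invoke that theorem. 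Both are valid; yours is the more classical complex-analytic argument and avoids the $H^p$ machinery entirely, while the paper's convolution device is recycled almost verbatim in the proof of Theorem \ref{banach-isometry}, which is presumably why the authors set it up once here. For the vanishing on $\cP_m$ the paper runs a careful two-term cutoff limit with $\chi^\eps Q$ and dominated convergence, whereas you use the pointwise inversion $f_0(x)=\tfrac{1}{\sqrt{2\pi}}\la\widehat{f_0},e^{ix(\cdot)}\ra$ valid for $\widehat{f_0}\in\cD'_c$, differentiate under the bracket, and read off $\la\widehat{f_0},\xi^k\ra=0$, $0\le k\le m$, from the normalization $P_{f_0;m;0}=0$; that is a cleaner finish, and it is legitimate precisely because the first half has already established $\widehat{f_0}\in\cD'_c$.
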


\proof
It is clear from the description of the realization spaces $E^{s,2}$
that $f_0\in\cS'$, hence, once we prove that $\supp\widehat
f_0\subseteq [-a,a]$, it immediately follows that $\widehat f_0\in
\cD'_c$. Let 
$\vp\in\cS_M\cap C^\infty_c$, with $M\ge s$. Given $f\in PW^s_a$ we define
$$
f_{\vp}(z):=\int_{\bbR}f(z-t)\vp(t)\, dt
$$
and we claim that $f_\vp\in \cE_a$ and $(f_\vp)_0=f_0* \vp\in
L^2$; where the symbol  $*$ denotes the standard convolution
on the real line.  
The function $f_\vp$ is clearly entire and, for every $\eps>0$, 
\begin{align*}
|f_\vp(z)|&\leq \int_{\bbR}|f(z-t)|\vp(t)|\, dt
\leq
C_\eps e^{(a+\eps)|z|}\int_{\bbR}e^{(a+\eps)|t|}|\vp(t)|\,
dt
\leq
C e^{(a+\eps)|z|}\
\end{align*}
where the last integral converges since $\vp$ is compactly
supported. Hence, $f_\vp\in\cE_a$.  Moreover, since $\vp\in\cS_M$,
then $\vp\ast\eta\in\cS_M$ as well for any $\eta\in\cS$. Therefore,
if $\{\vp_n\}\subseteq\cS$ is such that $\varphi_n\to f_0$ in
$\cS'/\cP_m$ and $\Delta^{\frac s2} \varphi_n \to \Delta^{\frac s2} f_0$
in $L^2$, for any $\eta\in\cS$ we have\footnote{We warn the reader that, we
  shall denote with the same symbol $\la\cdot,\cdot\ra$ different {\em bilinear}
  pairings of duality, such as $\la\cS',\cS\ra$, $\la\cD'_c,C^\infty\ra$, 
$\la L^{p'},L^p\ra$,
  etc. The actual pairing of duality should be clear from the context
  and there should not be any confusion.}  
\begin{align*}
 \la f_0\ast\varphi,\eta\ra&=\la f_0, \vp\ast\eta\ra\\
 &= \lim_{n\to+\infty}\la\vp_n,\vp\ast \eta\ra\\
 &=\lim_{n\to+\infty}\la\Delta^{\frac s2}\vp_n, \cI_s(\vp\ast\eta)\ra \,.
\end{align*}
The last equality follows
using the Parseval identity,
since $\vp\in\cS_M$, hence $\vp\ast\eta\in \cS_M$ as well, so that
$\cI_s(\vp\ast\eta)\in L^2$.  Moreover, if 
$\Psi\in\cS$ is such that $\vp\ast\eta=\Psi^{(M)}$, we have
$
\cI_s (\vp\ast\eta) = \cI_{s-\ell} \big(R^\ell  \Psi^{(M-\ell)}\big)$,
where $R$
denotes the Riesz transform, and $\ell=\lfloor s \rfloor$. Then,
\begin{align*}
  \|\cI_s(\vp\ast\eta)\|_{L^2}
  =\| R^\ell  \cI_{s-\ell}  \Psi^{(M-\ell)} \|_{L^2}
  \leq C\|\Psi^{(M-\ell)}\|_{H^p}<\infty
\end{align*}
where we have used Theorem \ref{riesz-potential} (i) with
$\frac12=\frac1p-(s-\ell)$. Notice
that $p$ could be either greater or smaller than $1$. If $p>1$, then
$\|\Psi^{(M-\ell)}\|_{H^p}\approx\|\Psi^{(M-\ell)}\|_{L^p}<\infty$ since
$\Psi\in\cS$, if $p<1$, the fact that $\|  \Psi^{(M-\ell)}\|_{H^p}$ is
finite if $M$ is sufficiently large is a well-known 
fact, see e.g. \cite{Stein}. Therefore,
\begin{align*}
\la f_0\ast\vp,\eta\ra&=\lim_{n\to+\infty}\la \Delta^{\frac s2}\vp_n, \cI_{s}(\vp\ast\eta)\ra\\
&=\la \Delta^{\frac s2} f_0, \cI_s(\vp\ast\eta)\ra\\
&=\la \Delta^{\frac s2}f_0\ast \cI_s\vp,\eta\ra
\end{align*}
for any $\eta\in\cS$. In particular $f_0\ast\vp=\Delta^{\frac s2}
f_0\ast \cI_s\vp$ in $L^2$ and if  $\Phi\in\cS$ is such that 
$\Phi^{(M)}=\vp$,  
\begin{align}
\| (f_{\vp})_0 \|_{L^2} & =\|f_0\ast\vp\|_{L^2} =\| \Delta^{\frac s2}
 f_0*R^\ell \cI_{s-\ell}  \Phi^{(M-\ell)}\|_{L^2}  \le \| f\|_{PW^s_a}
 \| \cI_{s-\ell} R^\ell \Phi^{(M-\ell)}\|_{H^1} \notag \\
& \le C \| f\|_{PW^s_a} \|  \Phi^{(M-\ell)}\|_{H^p} <\infty
\,, \label{Riesz-potential-estimate} 
\end{align}
for $M$ sufficiently large, 
where we have used Theorem \ref{riesz-potential} (i) with
$1=\frac1p-(s-\ell)$. This shows that $(f_{\vp})_0 \in L^2$, and
therefore, $f_\vp\in PW_a$.
Thus, $\supp (\widehat{f_0\ast\vp})=\supp (\widehat
f_0\cdot\widehat\vp)\subseteq [-a,a]$. 
Since for every $\xi_0\neq0$, $\xi_0\in\bbR$, there exists
$\vp\in\cS_M$ such that $\widehat\vp(\xi_0) \neq 0$, 
 we conclude that
$\supp\widehat f_0\subseteq [-a,a]$ as we wished to show. \ms 

Let now $s>\frac12$, $s-\frac12 \not\in\bbN$, and let $m=\lfloor s-\frac12\rfloor$.  
Fix $\chi\in C^\infty_c$, $\chi\ge0$, $\chi=1$ on $[-a,a]$.  For
$\eps>0$,
we adopt
here and throughout the paper the notation
$$
\chi^\eps(x)= \chi(\eps x),\quad \chi_\eps (x) =
\textstyle{\frac1\eps} \chi(x/\eps) \,.
$$

Given $Q=\sum_{j=0}^m q_j x^j \in\cP_m$, 
we have
\begin{align*}
\la \widehat f_0, Q\ra 
& = \la \widehat f_0, \chi^\eps Q\ra =  \big\la f_0, \widehat \chi_\eps
  * \Big( 
\sum_{j=0}^m (-1)^j q_j D^j \delta_0 \Big)  \big\ra \\
& = \la \sum_{j=0}^m q_j D^j f_0 ,    \widehat \chi_\eps \ra
 = \la \sum_{j=0}^m q_j D^j f_0 ,    \chi^\eps \widehat \chi_\eps  \ra
+  \la f_0 ,    \sum_{j=0}^m (-1)^j q_j D^j  \Big((1-\chi^\eps) \widehat
\chi_\eps \Big) \ra \\
& = I_\eps + I\!I_\eps \,.
\end{align*}
This equality holds for all $0<\eps\le1$ and we observe that, since
$f_0$ is of moderate growth, both $I_\eps$ and  $I\!I_\eps$ are given
by absolutely convergent integrals.  Let $M>0$ be such that 
$|f_0 (x)|\le C(1+|x|)^M$,
for some $C>0$.   We have
\begin{align*}
|I\!I_\eps|
& \le C \sum_{j=0}^m \int_{|x|\ge \frac a\eps} (1+|x|)^M
\frac{1}{\eps^{j+1}} 
D^j (\widehat \chi) (\frac
x\eps)\, dx \le C
\sum_{j=0}^m \int_{|x|\ge \frac a\eps} (1+|x|)^{M+m}
|D^j (\widehat \chi) (\frac
x\eps)|\, dx \\
& =  C\eps \sum_{j=0}^m \int_{|t|\ge \frac{a}{\eps^2}} (1+|\eps t|)^{M+m}
|D^j  \widehat \chi(t)|\, dt \\
& \le C_N \eps^N\,,
\end{align*}
for any $N>0$. On the other hand, using Lebesgue's dominated
convergence theorem it is easy to see that, as $\eps\to 0$,
\begin{align*}
I_\eps
& =  \int Q(D)f_0(\eps t)    \chi(\eps^2 t) \widehat\chi(t)\, dt
\to Q(D)f_0(0) \int \widehat\chi(t)\, dt =0\,,
\end{align*}
since $P_{f_0;m;0}=0$ and $Q\in\cP_m$.  Hence, $\widehat f_0(Q)=0$ and we are
done. 
\qed \ms

We now prove our first main theorem.
\proof[Proof of Theorem \ref{main-1}]
We start proving the second part of the statement. Let $g\in
L^2_a(|\xi|^{2s})$ and define $f$ as in \eqref{PW-1-eq2}. Then,  since
$0<s<\frac12$, for $z=x+iy$,
\begin{align*}
 |f(z)|&=\Big|\frac{1}{\sqrt{2\pi}}\int_{-a}^a g(\xi)e^{iz\xi}\, d\xi\Big|
\leq C\|g\|_{L^2_a(|\xi|^{2s})}\left(\int_{-a}^a|\xi|^{-2s}e^{-2y\xi}\right)^{\frac{1}{2}}
 \leq C e^{a|y|}\|g\|_{L^2_a(|\xi|^{2s})}.
\end{align*}
Therefore, $f$ is well-defined, is clearly entire and belongs to
$\cE_a$.  We wish to show that $f_0\in E^{s,2}$.  
Observing that 
\begin{align*}
f_0(x) 
&= \frac{1}{\sqrt{2\pi}} \int_{-a}^a g(\xi) e^{ix\xi}\, d\xi =
\cI_s \cF^{-1}( g|\xi|^s) (x) \,,
\end{align*}
and since $\cI_s:L^2\to L^{2^*}$, we see that $f_0\in  L^{2^*}$.
Moreover, since  $\widehat f_0 = g\in L_a^2(|\xi|^{2s})$, it follows
that $\Delta^{\frac s2} f_0\in L^2$. Hence  $f_0\in E^{s,2}$, $f\in
PW_a^s$ and
$$
\| f\|_{PW^s_a} 
= \| g\|_{L_a^2(|\xi|^{2s})} \,.
$$

Now, let $f\in PW^s_a$. Lemma \ref{cpt-support} 
guarantees that
$\widehat f_0 \in L^2_a(|\xi|^{2s})$ and in particular
 is compactly supported in $[-a,a]$.  From the first part of the theorem, we know that
 the function 
 $$
 \widetilde f(z):=\frac{1}{\sqrt{2\pi}}\int_{-a}^a \widehat f_0(\xi) e^{iz\xi}\, d\xi
 $$
is a well-defined function in $PW^s_a$ and $\widetilde f_0=
f_0$. Hence,  $f$ and $\widetilde f$ coincide everywhere as we wished 
to show. \qed
\ms

\begin{cor}\label{kernel-s-small}
 The spaces $PW^s_a$, $0<s<\frac12$, are reproducing kernel Hilbert
 spaces with reproducing kernel
 \begin{equation*}
K(w,z)=\frac{1}{2\pi}\int_{-a}^a e^{i(w-\bar z)\xi}|\xi|^{-2s}\, d\xi.  
 \end{equation*}
\end{cor}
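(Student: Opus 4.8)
The plan is to read off everything from the statement of Theorem \ref{main-1}, namely that the Fourier transform $\cF\colon PW^s_a\to L^2_a(|\xi|^{2s})$, $f\mapsto\widehat{f_0}$, is a surjective isometry. Transporting the Hilbert space structure of $L^2_a(|\xi|^{2s})$ through this isometry (using polarization, since $\cF$ is linear and surjective and preserves norms), $PW^s_a$ becomes a Hilbert space with inner product
$$
\langle f,g\rangle_{PW^s_a}=\int_{-a}^a\widehat{f_0}(\xi)\,\overline{\widehat{g_0}(\xi)}\,|\xi|^{2s}\,d\xi .
$$
Moreover the estimate in the first part of the proof of Theorem \ref{main-1} gives $|f(z)|\le C e^{a|\Im z|}\|\widehat{f_0}\|_{L^2_a(|\xi|^{2s})}=C e^{a|\Im z|}\|f\|_{PW^s_a}$, so point evaluations are bounded linear functionals and $PW^s_a$ is a reproducing kernel Hilbert space. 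It then remains to exhibit the kernel explicitly.

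Fix $z\in\bbC$ and set $g_z(\xi):=\tfrac{1}{\sqrt{2\pi}}\,e^{-i\bar z\xi}\,|\xi|^{-2s}\mathbf{1}_{[-a,a]}(\xi)$. Since $0<s<\tfrac12$ we have $2s<1$, hence $\int_{-a}^a|\xi|^{-2s}\,d\xi<\infty$, which is exactly the assertion that $g_z\in L^2_a(|\xi|^{2s})$. By the converse direction of Theorem \ref{main-1}, the function
$$
K(\,\cdot\,,z)\colon w\longmapsto \frac{1}{\sqrt{2\pi}}\int_{-a}^a g_z(\xi)\,e^{iw\xi}\,d\xi
$$
belongs to $PW^s_a$ and satisfies $\widehat{K(\cdot,z)_0}=g_z$. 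Substituting the expression for $g_z$ and carrying out the (absolutely convergent) integral yields
$$
K(w,z)=\frac{1}{2\pi}\int_{-a}^a e^{i(w-\bar z)\xi}\,|\xi|^{-2s}\,d\xi,
$$
which is the claimed formula.

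Finally I would verify the reproducing property directly: for any $f\in PW^s_a$,
$$
\langle f,K(\cdot,z)\rangle_{PW^s_a}=\int_{-a}^a\widehat{f_0}(\xi)\,\overline{g_z(\xi)}\,|\xi|^{2s}\,d\xi=\frac{1}{\sqrt{2\pi}}\int_{-a}^a\widehat{f_0}(\xi)\,e^{iz\xi}\,d\xi=f(z),
$$
where the middle equality uses $\overline{g_z(\xi)}\,|\xi|^{2s}=\tfrac{1}{\sqrt{2\pi}}e^{iz\xi}$ on $[-a,a]$, and the last equality is the integral representation \eqref{PW-1-eq1} of Theorem \ref{main-1}. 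There is no genuine obstacle here: the only points requiring care are the placement of the complex conjugate (note that $\overline{e^{iz\xi}}=e^{-i\bar z\xi}$ for real $\xi$ and complex $z$, which is what produces the $w-\bar z$ in the kernel), and the local integrability of $|\xi|^{-2s}$ at the origin, which is precisely guaranteed by the hypothesis $s<\tfrac12$.
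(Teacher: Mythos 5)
Your proof is correct and follows essentially the same route as the paper's: both rely on Theorem \ref{main-1} (the isometry with $L^2_a(|\xi|^{2s})$) to obtain the Hilbert-space structure and boundedness of point evaluations, and then identify $\widehat{(K_z)_0}(\xi)=\tfrac{1}{\sqrt{2\pi}}e^{-i\bar z\xi}|\xi|^{-2s}\chi_{[-a,a]}(\xi)$ from the reproducing property paired through the isometry. The only differences are cosmetic — you exhibit the kernel and verify, the paper solves for it, and you are somewhat more explicit that completeness is transported from $L^2_a(|\xi|^{2s})$ and that membership of $g_z$ in that space requires $2s<1$.
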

\proof
From \eqref{PW-1-eq1} we deduce that point-evaluations are bounded on
$PW^s_a$. In fact,   
\begin{align*}
|f(z)|=\left|\frac{1}{\sqrt{2\pi}}\int_{-a}^a \widehat{f_0}(\xi) e^{iz\xi}\,
  d\xi\right|\leq
C\|\widehat{f_0}\|_{L^2_a(|\xi|^{2s})}\Big(\int_{-a}^a
e^{-2y\xi}|\xi|^{-2s}\, d\xi\Big)^{\frac12}\leq C
e^{a|y|}\|\widehat{f_0}\|_{L^2_a(|\xi|^{2s})}. 
\end{align*}
This easily implies that
$PW^s_a$ is complete, hence a reproducing kernel Hilbert space. For
$z\in\bbC$, the
kernel  function $K_z$ satisfies\footnote{We denote by
  $\la\cdot \,|\,\cdot\ra_H$ the {\em Hermitian} inner product on a given
    Hilbert space $H$.}
\begin{align*} 
\frac{1}{\sqrt{2\pi}}\int_{-a}^a\widehat{f_0}(\xi)
 e^{iz\xi}\, d\xi =  f(z)=\big\la
  f\,|\,K_z\big\ra_{PW^s_a}=\big\la\widehat{f_0} \,|\, \widehat
 {(K_z)_0}\big\ra_{L^2_a(|\xi|^{2s})}=\int_{-a}^a \widehat f_0(\xi)
 \overline{\widehat{(K_z)_0}(\xi)}
 |\xi|^{2s\,}d\xi \,. 
\end{align*}
Therefore,  $\widehat{(K_z)_0}(\xi)=\frac{1}{\sqrt{2\pi}} e^{-i\bar
  z\xi}|\xi|^{-2s}\chi_{[-a,a]}(\xi)$ and the conclusion follows.
\epf
\ms

Next, we consider the case $s>\frac12$.

\begin{lem}\label{lemma-Tg}
Let $s>\frac 12$,  $s-\frac12\notin\bbN$ and let $m=\lfloor
s-\frac12\rfloor$. Given $g\in L^2(|\xi|^{2s})$ define $Tg\in\cS'$ as 
\begin{equation}\label{Tg}
  \la Tg,\psi\ra:=\frac{1}{\sqrt{2\pi}}\int_\bbR g(\xi)
  \big(\psi(\xi)-P_{\psi;m;0}(\xi) \big)\, d\xi 
\end{equation}
for any $\psi\in\cS$. Then, $Tg$ is well-defined and $T:
L^2(|\xi|^{2s})\to \cS'$ is a continuous operator.
\end{lem}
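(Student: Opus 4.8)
The plan is to verify two things: first, that the integral defining $\la Tg,\psi\ra$ in \eqref{Tg} converges absolutely for every $\psi\in\cS$, and second, that the resulting functional is continuous in $\psi$, with the operator bound linear in $g$. The key observation is that near the origin the function $\psi-P_{\psi;m;0}$ vanishes to order $m+1$, while away from the origin the weight $|\xi|^{2s}$ is harmless because $g\in L^2(|\xi|^{2s})$ forces $g$ to be in $L^2$ on $\{|\xi|\ge 1\}$ already, and $\psi$ decays rapidly.

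For the absolute convergence, I would split $\int_\bbR = \int_{|\xi|\le 1} + \int_{|\xi|>1}$. On $|\xi|>1$ we have $|g(\xi)| \le |g(\xi)|\,|\xi|^s$, so by Cauchy--Schwarz
\[
\int_{|\xi|>1} |g(\xi)|\,|\psi(\xi)-P_{\psi;m;0}(\xi)|\,d\xi
\le \|g\|_{L^2(|\xi|^{2s})}\Big(\int_{|\xi|>1} |\xi|^{-2s}|\psi(\xi)-P_{\psi;m;0}(\xi)|^2\,d\xi\Big)^{1/2},
\]
and since $2s>1$ the weight $|\xi|^{-2s}$ is integrable at infinity while $|\psi(\xi)-P_{\psi;m;0}(\xi)|\le |\psi(\xi)| + |P_{\psi;m;0}(\xi)|$ is controlled by a polynomial times the Schwartz seminorms of $\psi$; more precisely one bounds this by $C(1+|\xi|)^{m}$ with $C$ a finite sum of seminorms $\sup|x^j\psi^{(k)}(x)|$, and $\int_{|\xi|>1}|\xi|^{-2s}(1+|\xi|)^{2m}\,d\xi<\infty$ provided $2s - 2m > 1$, i.e. $s - m > \tfrac12$, which holds since $m=\lfloor s-\frac12\rfloor$. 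On $|\xi|\le 1$, Taylor's theorem with remainder gives $|\psi(\xi)-P_{\psi;m;0}(\xi)| \le \frac{|\xi|^{m+1}}{(m+1)!}\sup_{|t|\le 1}|\psi^{(m+1)}(t)|$, hence
\[
\int_{|\xi|\le 1}|g(\xi)|\,|\psi(\xi)-P_{\psi;m;0}(\xi)|\,d\xi
\le \|g\|_{L^2(|\xi|^{2s})}\Big(\int_{|\xi|\le1}|\xi|^{2(m+1)-2s}\,d\xi\Big)^{1/2}\frac{\|\psi^{(m+1)}\|_{L^\infty}}{(m+1)!},
\]
and the $\xi$-integral is finite because $2(m+1)-2s > -1$, equivalently $s-m < \tfrac32$, again guaranteed by $m=\lfloor s-\frac12\rfloor$. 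This simultaneously shows $Tg$ is well-defined as a linear functional and that $|\la Tg,\psi\ra| \le C\,\|g\|_{L^2(|\xi|^{2s})}\cdot(\text{finite sum of Schwartz seminorms of }\psi)$, which is exactly continuity of $Tg$ on $\cS$ (so $Tg\in\cS'$) together with continuity of the map $g\mapsto Tg$ from $L^2(|\xi|^{2s})$ into $\cS'$ (the target topology being that of pointwise convergence on $\cS$, or the strong topology on bounded sets, either way the same estimate suffices).

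The only genuinely delicate point is making sure the two exponent conditions $2s-2m>1$ and $2(m+1)-2s>-1$ both hold, and this is precisely where the hypothesis $s-\frac12\notin\bbN$ is used: writing $m=\lfloor s-\frac12\rfloor$ we get $m < s-\frac12 < m+1$ (strict on both sides because $s-\frac12$ is not an integer), which rearranges to exactly the two inequalities needed. I would state this at the start of the proof as the arithmetic fact $m < s-\tfrac12 < m+1$ and then the two integrability checks are routine. I expect no other obstacle; the rest is bookkeeping with Cauchy--Schwarz and Taylor's formula.
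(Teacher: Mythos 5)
Your proof is correct and follows essentially the same route as the paper's: split the integral at $|\xi|=1$, apply Cauchy--Schwarz (the paper calls it H\"older) against the weight $|\xi|^{2s}$, use Taylor's remainder near the origin, and use polynomial growth plus rapid decay of $\psi$ at infinity, with both integrability conditions coming from the strict double inequality $m<s-\tfrac12<m+1$ guaranteed by $s-\tfrac12\notin\bbN$. Your explicit isolation of that arithmetic fact at the outset is a small stylistic improvement over the paper, which leaves it implicit in the line ``$-\tfrac32<m-s<-\tfrac12$,'' but the substance is identical.
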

\begin{proof}
By H\"older's inequality we have
\begin{align*}
  |\la Tg,\psi\ra|
  &\leq \|g\|_{L^2(|\xi|^{2s})}\bigg(\int_\bbR |\psi(\xi)-P_{\psi;m;0}(\xi)|^2|\xi|^{-2s}\bigg)^\frac12\\
&\leq  \|g\|_{L^2(|\xi|^{2s})}\bigg(\Big(\int_{|\xi|\leq 1}+\int_{|\xi|\geq 1}\Big) |\psi(\xi)-P_{\psi;m;0}(\xi)|^2|\xi|^{-2s}\, d\xi\bigg)^\frac12
\end{align*}
Notice that, since $m=\lfloor s-\frac12\rfloor$ and $s-\frac12\notin\bbN$, we have $-\frac32<m-s<-\frac12$. Hence, 
\begin{align*}
 \int_{|\xi|\geq 1} |\psi(\xi)-P_{\psi;m;0}(\xi)|^2|\xi|^{-2s}\,
  d\xi&\leq\int_{|\xi|\geq 1}|\psi(\xi)|^2|\xi|^{2s}\, d\xi
        +\int_{|\xi|\geq 1}|P_{\psi;m;0}(\xi)|^2|\xi|^{-2s}\, d\xi \\
 &\leq \int_{|\xi|\geq 1}|\psi(\xi)|^2|\xi|^{2s}\, d\xi+\max_{0\leq j\leq m}|\psi^{(j)}(0)|\Big(\int_{|\xi|\geq 1}|\xi|^{2(m-s)}\Big)^{\frac12}\\
 & \leq C_\psi,
\end{align*}
where $C_\psi$ denotes a finite positive constant bounded by some Schwartz seminorm of $\psi$. Moreover,
\begin{align*}
  \int_{|\xi|\leq 1}|\psi(\xi)-P_{\psi;m;0}(\xi)|^2|\xi|^{-2s}\, d\xi
  &\leq\sup_{|\xi|\leq 1}|\psi^{(m+1)}(\xi)|\int_{|\xi|\leq 1}|\xi|^{2(m+1-s)}\, d\xi<+\infty.
\end{align*}
From these estimates it is clear that $T:L^2(|\xi|^{2s})\to\cS'$ is a continuous operator as we wished to show.
\end{proof}

\begin{lem}\label{f-not-hat-weighted-L2}
Let  $s>\frac 12$, $s-\frac12\notin\bbN$ and
$m=\lfloor s-\frac12\rfloor$.  Given $f\in PW^s_a$  there exists a
unique $g\in
L^2_a(|\xi|^{2s})$ such that $\widehat f_0= Tg$ in $\cS'$, that is, 
$$
\la \widehat f_0,\psi\ra=\la Tg,\psi\ra=
\frac{1}{\sqrt{2\pi}}\int_{-a}^a g(\xi) \big(\psi(\xi)-P_{\psi;m;0}(\xi)\big)\, d\xi
$$
for all $\psi\in \cS$.
\end{lem}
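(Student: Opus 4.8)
The plan is to recover $g$ by identifying $\widehat{f_0}$, away from the origin, with a genuine function, then checking that this function lies in $L^2_a(|\xi|^{2s})$, and finally verifying that as a distribution $\widehat{f_0}$ is exactly the regularisation $Tg$ of it.

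\emph{Identification off the origin.} Since $f_0\in E^{s,2}$, by \cite[Corollary 3.3]{MPS-sob} there are $\vp_n\in\cS$ with $\vp_n\to f_0$ in $\cS'/\cP_m$ and $\Delta^{\frac s2}\vp_n\to\Delta^{\frac s2}f_0$ in $L^2$. Set $G:=\cF(\Delta^{\frac s2}f_0)$; since $\cF(\Delta^{\frac s2}\vp_n)=|\cdot|^s\widehat{\vp_n}$ and $\cF$ is an $L^2$-isometry, $G=\lim_n|\cdot|^s\widehat{\vp_n}$ in $L^2$ and $\|G\|_{L^2}=\|\Delta^{\frac s2}f_0\|_{L^2}=\|f\|_{PW^s_a}$. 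For $\chi\in C^\infty_c(\bbR\setminus\{0\})$ the function $|\cdot|^s\chi$ is again in $C^\infty_c(\bbR\setminus\{0\})$, so $\cF(|\cdot|^s\chi)\in\cS_\infty\subseteq\cS_m$; testing the convergence $\vp_n\to f_0$ in $\cS'/\cP_m$ against $\cF(|\cdot|^s\chi)$,
\[
\la G,\chi\ra=\lim_n\la\widehat{\vp_n},|\cdot|^s\chi\ra=\lim_n\la\vp_n,\cF(|\cdot|^s\chi)\ra=\la f_0,\cF(|\cdot|^s\chi)\ra=\la\widehat{f_0},|\cdot|^s\chi\ra,
\]
i.e. $\la\widehat{f_0},\psi\ra=\int G(\xi)|\xi|^{-s}\psi(\xi)\,d\xi$ for all $\psi\in C^\infty_c(\bbR\setminus\{0\})$. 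Hence on $\bbR\setminus\{0\}$ the distribution $\widehat{f_0}$ is the locally $L^2$ function $|\xi|^{-s}G$; by Lemma \ref{cpt-support} $\supp\widehat{f_0}\subseteq[-a,a]$, so $G$ is supported in $[-a,a]$, and (a suitable constant multiple of) $|\xi|^{-s}G$ is a function $g\in L^2_a(|\xi|^{2s})$ with $\|g\|_{L^2_a(|\xi|^{2s})}=\|f\|_{PW^s_a}$ which makes $\widehat{f_0}$ coincide with $Tg$ on $C^\infty_c(\bbR\setminus\{0\})$.

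\emph{The identity in $\cS'$.} Fix $\psi\in\cS$ and put $\phi:=\psi-P_{\psi;m;0}$, which vanishes to order $m+1$ at $0$; by Lemma \ref{cpt-support} $\la\widehat{f_0},\psi\ra=\la\widehat{f_0},\phi\ra$, while $\la Tg,\psi\ra=\frac1{\sqrt{2\pi}}\int g\phi$. With $\rho\in C^\infty_c$, $\rho\equiv1$ near $0$, and $\rho_\eps(\xi)=\rho(\xi/\eps)$, split $\phi=(1-\rho_\eps)\phi+\rho_\eps\phi$. Since $(1-\rho_\eps)\phi$ is supported away from the origin, the previous step together with the compact support of $\widehat{f_0}$ give $\la\widehat{f_0},(1-\rho_\eps)\phi\ra=\frac1{\sqrt{2\pi}}\int g(1-\rho_\eps)\phi$, which tends to $\frac1{\sqrt{2\pi}}\int g\phi$ by dominated convergence (the bound $|\phi(\xi)|\le C|\xi|^{m+1}$ near $0$ and $2(m+1)-2s>-1$ make $g\phi$ integrable on $[-a,a]$, exactly as in Lemma \ref{lemma-Tg}). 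So the matter reduces to proving $\la\widehat{f_0},\rho_\eps\phi\ra\to0$. Here I use that $f_0\in\dot\Lambda^{s-\frac12}$ with $P_{f_0;m;0}=0$, which forces $|f_0(x)|\le C(1+|x|)^{s-\frac12}$, and write $\la\widehat{f_0},\rho_\eps\phi\ra=\int f_0(x)\,\cF(\rho_\eps\phi)(x)\,dx$. The function $\rho_\eps\phi$ is supported in $\{|\xi|\le2\eps\}$ with $|(\rho_\eps\phi)^{(k)}|\le C_k\eps^{m+1-k}$ there, which by integration by parts yields the scale-aware estimate $|\cF(\rho_\eps\phi)(x)|\le C_N\eps^{m+2}(1+\eps|x|)^{-N}$ for every $N$; inserting this, using the growth of $f_0$, and rescaling $y=\eps x$ leaves $|\la\widehat{f_0},\rho_\eps\phi\ra|\le C\eps^{m+\frac32-s}$, which tends to $0$ because $m=\lfloor s-\frac12\rfloor>s-\frac32$. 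Thus $\la\widehat{f_0},\psi\ra=\frac1{\sqrt{2\pi}}\int_{-a}^a g(\psi-P_{\psi;m;0})\,d\xi=\la Tg,\psi\ra$ for all $\psi\in\cS$.

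\emph{Uniqueness and the main obstacle.} If $Tg_1=Tg_2$ with $g_1,g_2\in L^2_a(|\xi|^{2s})$, choosing $\psi=\xi^{m+1}\eta$ with $\eta\in C^\infty_c((-a,a))$ (so $P_{\psi;m;0}=0$) gives $\int_{-a}^a(g_1-g_2)(\xi)\,\xi^{m+1}\eta(\xi)\,d\xi=0$ for all such $\eta$, and since $(g_1-g_2)\xi^{m+1}\in L^1_{\mathrm{loc}}((-a,a))$ this forces $g_1=g_2$ a.e. The delicate point is the limit $\la\widehat{f_0},\rho_\eps\phi\ra\to0$: one cannot control $\widehat{f_0}$ using only a fixed finite number of derivatives of the test function (its order at the origin need not be $\le m$), so the argument must play the pointwise growth bound on $f_0$ coming from $f_0\in\dot\Lambda^{s-\frac12}$ and the vanishing of its Taylor polynomial against the fact that $\rho_\eps\phi$ is concentrated at scale $\eps$ and vanishes to order $m+1$ at the origin.
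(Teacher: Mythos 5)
Your proof is correct and reaches the conclusion by a genuinely different route than the paper's. Both proofs begin the same way: they take the approximating sequence $\{\vp_n\}\subseteq\cS$ for $f_0$, identify $g$ with the $L^2(|\xi|^{2s})$-limit of $\widehat\vp_n$ (equivalently, $|\xi|^{-s}\cF(\Delta^{\frac s2}f_0)$ up to a constant), and reduce the question to the behaviour of $\widehat f_0-Tg$ at the origin. Where they diverge is in how this residual is killed. The paper observes that $T\widehat\vp_n\to Tg$ in $\cS'$ by continuity of $T$, while $T\widehat\vp_n\to\widehat f_0$ on the subspace $\widehat{\cS_m}$; it deduces that $\widehat f_0-Tg=Q(D)\delta_0$ is a distribution of order $\leq m$ supported at the origin, and then kills $Q(D)\delta_0$ by testing against $\eta P$ with $P\in\cP_m$, using the conclusion $\big(\widehat f_0\big)\big|_{\cP_m}=0$ from Lemma \ref{cpt-support} together with the explicit vanishing $\la Tg,\eta P\ra=0$. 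You instead keep everything quantitative: having identified $\widehat f_0$ with (a constant multiple of) $g$ off the origin, you split a general test function $\phi=\psi-P_{\psi;m;0}$ at scale $\eps$ and must show $\la\widehat f_0,\rho_\eps\phi\ra\to0$. For this you invoke the concrete realization-space information $f_0\in\dot\Lambda^{s-\frac12}$ together with $P_{f_0;m;0}=0$, which yields the pointwise growth $|f_0(x)|\leq C(1+|x|)^{s-\frac12}$, and a scale-aware bound $|\cF(\rho_\eps\phi)(x)|\leq C_N\eps^{m+2}(1+\eps|x|)^{-N}$ coming from the order-$(m+1)$ vanishing of $\phi$ at $0$; the exponent $m+\tfrac32-s>0$ you obtain is positive precisely because $m=\lfloor s-\frac12\rfloor>s-\frac32$. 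The paper's route is cleaner and more structural (the residual is ``a distribution supported at a point, hence a derivative of $\delta$''), whereas yours makes visible, in purely analytic terms, that the Taylor correction in the definition of $T$ is the exact regularization needed to match the known growth of $f_0$; as a side benefit your first step independently derives the content of Lemma \ref{Lemma3.3} rather than using it. A small remark on bookkeeping: in the step ``$\la\widehat{f_0},(1-\rho_\eps)\phi\ra=\frac{1}{\sqrt{2\pi}}\int g(1-\rho_\eps)\phi$'' you should insert a cutoff $\beta\in C^\infty_c$ with $\beta\equiv1$ on a neighbourhood of $[-a,a]$ so that $\beta(1-\rho_\eps)\phi\in C^\infty_c(\bbR\setminus\{0\})$ and the identification established in your first paragraph applies verbatim; this is harmless since $\widehat f_0\in\cD'_c$ with support in $[-a,a]$.
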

\proof
By the results in \cite{MPS-sob}, since 
$f_0\in  E^{s,2}$, 
there exists a sequence $\{\vp_n\}\subseteq\cS$ such that 
 $\{\Delta^{\frac s2} \vp_n\}$ is a Cauchy sequence in $L^2$, and
$\vp_n\to
f_0$ in $\cS'/\cP_m$, where $m=\lfloor s-\frac12\rfloor$, that is,
$
\la \vp_n,\psi\ra \to \la f_0,\psi\ra = \la \widehat f_0, \widehat \psi\ra$,
as $n\to\infty$, for all $\psi\in\cS_m$.  Therefore,
$$
\la \widehat\vp_n, \eta\ra \to \la \widehat f_0,\eta\ra
$$
as $n\to\infty$, for all $\eta\in\widehat{\cS_m}=\cS\cap
\{\eta\in\cS:\, P_{\eta;m;0}\}=0$.  Moreover, there exists a unique
$g\in L^2(|\xi|^{2s})$ such that $\widehat \vp_n\to g$ in
$L^2(|\xi|^{2s})$.  Since $T: L^2(|\xi|^{2s})\to \cS'$ is continuous,
we also have that $T\widehat\vp_n\to Tg$ in $\cS'$. We now prove that
it holds also $T\widehat \vp_n\to \widehat f_0$ in
$(\widehat{\cS_m})'$ In fact, given $\psi\in \widehat {\cS_m}$, we
have 
\begin{align*}
  \la T\widehat \vp_n,\psi\ra
  &=\int_{\bbR}\widehat\vp_n(\xi) \big( \psi(\xi)-P_{\psi;m;0}(\xi) \big)\, d\xi
 =\int_\bbR \widehat\vp_n(\xi)\psi(\xi)\, d\xi=\int_\bbR \vp_n(\xi)\widehat\psi(\xi)\,d\xi\\
 &\to\int_\bbR f_0(\xi)\widehat\psi(\xi)\, d\xi = \la\widehat
   f_0,\psi\ra \,.
\end{align*} 
Therefore, $\widehat f_0= Tg$ in $(\widehat{\cS_m})'$, that is, if $Q(D)(\delta)=\sum_{j=0}^{m}c_j \delta^{(j)}$,
$$
\widehat f_0= Tg+Q(D)(\delta) 
$$
in $\cS'$. In particular, this implies that $\supp Tg\subset[-a,a]$, hence, $Tg\in\cD'_c$ and $\supp g\subseteq[-a,a]$.

We now prove that $Q(D)(\delta)=0$. Let $P\in \cP_m$ and let $\eta\in
C^\infty_c$ such that $\eta\equiv 1$ on $[-a,a]$ so that $\eta P\in
\cS$. Since $\widehat f_0$ is supported in $[-a,a]$ from Lemma
\ref{cpt-support} we get 
\begin{align*}
 \la \widehat f_0, \eta P\ra=\la\widehat f_0, P\ra=0
\end{align*}
and, since $Tg$ is supported in $[-a,a]$ as well,
\begin{align*}
 \la Tg, \eta P\ra&=\int_{-a}^a g(\xi) \big((\eta P)(\xi)- P_{\eta P,m;0}(\xi)\big)\, d\xi=0
\end{align*}
since $P_{\eta P,m;0}=\eta P$ on $[-a,a]$. Therefore, we obtain that
$\la Q(D)(\delta), \eta P\ra=0$ as well and, by the arbitrariness of
$\eta P$, we conclude that $Q(D)(\delta)=0$ as we wished to
show. Thus, $\widehat f_0= Tg$ in $\cS'$. 
\qed\ms

Before proving the next lemma, we need the following definition. Given
$s>0$ and $\psi\in\widehat{\cS_\infty}$, notice that
$|\xi|^s\psi\in\widehat{\cS_\infty}$. 
Then, given
$U$ in $\cS'$, for any  we define $|\xi|^s U$ by setting
$$
\la |\xi|^sU,\psi\ra= \la U, |\xi|^s\psi\ra \,.
$$
 We now prove the following simple, but
not obvious, lemma.

\begin{lem}\label{Lemma3.3}
 Let $s>\frac12, s-\frac12\notin\bbN$ and let  $f\in PW^s_a$. Then,
 $\cF(\Delta^{\frac s2}f_0)=|\xi|^s\widehat f_0$, with equality in
 $L^2_a$. 
\end{lem}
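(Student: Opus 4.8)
\emph{Strategy.} The plan is to take a sequence in $\cS$ defining $\Delta^{\frac s2}f_0$, push it through the Fourier transform, and match the resulting $L^2$ limit with $|\cdot|^s\widehat f_0$. Since $f_0\in E^{s,2}$, the results of \cite{MPS-sob} used in the proof of Lemma \ref{f-not-hat-weighted-L2} provide a sequence $\{\vp_n\}\subseteq\cS$ with $\vp_n\to f_0$ in $\cS'/\cP_m$ and $\Delta^{\frac s2}\vp_n\to\Delta^{\frac s2}f_0$ in $L^2$; and, again from that proof, $\widehat\vp_n\to g$ in $L^2(|\xi|^{2s})$, where $g\in L^2_a(|\xi|^{2s})$ is the unique function with $\widehat f_0=Tg$ in $\cS'$ and $\supp g\subseteq[-a,a]$.

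\emph{Main computation.} For every $n$, the definition of the fractional Laplacian on $\cS$ yields the exact identity $\cF\big(\Delta^{\frac s2}\vp_n\big)=|\cdot|^s\widehat\vp_n$. Applying the unitary operator $\cF$ to the convergence $\Delta^{\frac s2}\vp_n\to\Delta^{\frac s2}f_0$ in $L^2$ gives $|\cdot|^s\widehat\vp_n\to\cF\big(\Delta^{\frac s2}f_0\big)$ in $L^2$. On the other hand, since $\|h\|_{L^2(|\xi|^{2s})}=\||\cdot|^s h\|_{L^2}$ by definition, the convergence $\widehat\vp_n\to g$ in $L^2(|\xi|^{2s})$ is precisely $|\cdot|^s\widehat\vp_n\to|\cdot|^s g$ in $L^2(\bbR)$. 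Comparing the two limits, $\cF\big(\Delta^{\frac s2}f_0\big)=|\cdot|^s g$ in $L^2$; as $g$ is supported in $[-a,a]$, this identity holds in $L^2_a$, and in particular $\cF\big(\Delta^{\frac s2}f_0\big)$ is supported in $[-a,a]$.

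\emph{Identification with $|\cdot|^s\widehat f_0$.} It remains to recognize the $L^2_a$ function $|\cdot|^s g$ as $|\cdot|^s\widehat f_0$. Here I would recall from the proof of Lemma \ref{f-not-hat-weighted-L2} that the distribution $\widehat f_0=Tg$ agrees with the locally integrable function $g$ on $\bbR\setminus\{0\}$ — the Taylor correction $P_{\psi;m;0}$ in the definition of $T$ affects only test functions supported near the origin — so that, $\{0\}$ being a null set, $|\cdot|^s\widehat f_0$ is unambiguously the function $\xi\mapsto|\xi|^s g(\xi)$. Equivalently, one can argue through the pairing defining $|\cdot|^sU$: for $\psi\in\widehat{\cS_\infty}$ one has $|\cdot|^s\psi\in\widehat{\cS_\infty}$ as well, hence $P_{|\cdot|^s\psi;m;0}=0$, so that $\la|\cdot|^s\widehat f_0,\psi\ra=\la Tg,|\cdot|^s\psi\ra=\la|\cdot|^s g,\psi\ra$. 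Either way, $\cF\big(\Delta^{\frac s2}f_0\big)=|\cdot|^s\widehat f_0$ with equality in $L^2_a$.

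\emph{Main obstacle.} The only genuinely delicate step is this last identification: since $\widehat f_0$ is a bona fide distribution (the object $Tg$, carrying a polynomial ambiguity) and $|\xi|^s$ is not smooth at the origin, the symbol $|\cdot|^s\widehat f_0$ is meaningful only through the pairing against $\widehat{\cS_\infty}$ — equivalently, by restriction to $\bbR\setminus\{0\}$ — and one must check that on that class the polynomial correction in $T$ drops out, which it does because the relevant test functions vanish to infinite order at $0$. Everything else is a routine transfer of the $L^2$-convergence of $\{\Delta^{\frac s2}\vp_n\}$ through the isometry $\cF$.
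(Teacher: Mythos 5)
Your proof is correct and takes essentially the same route as the paper's: both rely on the sequence $\{\vp_n\}$ defining $\Delta^{\frac s2}f_0$, on Lemma~\ref{f-not-hat-weighted-L2}'s $g$ with $\widehat f_0=Tg$, and on the observation that $P_{|\xi|^s\psi;m;0}=0$ for $\psi\in\widehat{\cS_\infty}$ so that the Taylor correction in $T$ drops out of the pairing $\la|\xi|^s\widehat f_0,\psi\ra$. The only difference is cosmetic: you obtain $\cF(\Delta^{\frac s2}f_0)=|\xi|^s g$ directly from the $\cF$-isometry on $L^2$ and uniqueness of limits, whereas the paper reaches the same endpoint via two separate density-and-pairing arguments (first establishing $|\xi|^s\widehat f_0\in L^2_a$, then identifying it with $\cF(\Delta^{\frac s2}f_0)$); note in passing that your phrase ``locally integrable function $g$'' is slightly loose near the origin (for $s>\frac12$ one only has $g\in L^1_{loc}(\bbR\setminus\{0\})$), but your fallback pairing argument is the rigorous version and closes that gap.
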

\begin{proof}
Since $f\in PW^s_a$, we already know that $\cF(\Delta^{\frac
  s2}f_0)\in L^2$. We now consider $|\xi|^s\widehat f_0\in
(\widehat{\cS_\infty})'$ and we show that it actually belongs to
$L^2_a$. Then, we show it coincides with $\cF(\Delta^{\frac s2}f_0)$. 
 Let $\psi\in \widehat{\cS_\infty}$. Then, from Lemma
 \ref{f-not-hat-weighted-L2} there exists $g\in L^2_a(|\xi|^{2s})$
 such that
 \begin{align*}
   \la |\xi|^s\widehat f_0, \psi\ra
   & =\la\widehat f_0, |\xi|^s\psi\ra=\la Tg,|\xi|^s\psi\ra
     =\frac{1}{\sqrt{2\pi}}\int_{-a}^a
   g(\xi) \big(|\xi|^s \psi(\xi)-P_{|\xi|^s\psi;m;0}(\xi) \big)\,
     d\xi \\
   & = \frac{1}{\sqrt{2\pi}} \int_{-a}^a
     g(\xi) |\xi|^s\psi(\xi)\, d\xi \,,
 \end{align*}
 since $P_{|\xi|^s\psi;m;0} =0$.  Hence,
 \begin{align*}
   |\la |\xi|^s\widehat f_0,\psi\ra|
   &\leq C\|g\|_{L^2_a(|\xi|^{2s})}\|\psi\|_{L^2} \,.
 \end{align*}
By the density of $\widehat{\cS_\infty}$ in $L^2$, we conclude that
$|\xi|^s\widehat f_0\in (L^2)'$, that is, $|\xi|^s\widehat f_0\in
L^2_a$ as we wished to show. Now,
since $f_0 \in E^{s,2}$, there exists $\{\vp_n\}\subseteq \cS$ such that $\vp_n\to
f_0$ in $\cS'/\cP_m$ and $\{\Delta^{\frac s2} \vp_n\}$ is a Cauchy
sequence in $L^2$. Then, for $\psi\in\cS_\infty$, which is
dense in $L^2$, we have
\begin{align*}
  \la\Delta^{\frac s2}f,\psi\ra
  &=\lim_{n\to+\infty}\la\Delta^{\frac s2}\vp_n,\psi\ra
    =\lim_{n\to+\infty}\la|\xi|^s\widehat{\vp_n},\widehat\psi\ra
    =\lim_{n\to+\infty}\la\widehat \vp_n,|\xi|^s\widehat
    \psi\ra  \\
  & =  \lim_{n\to+\infty}\la  \vp_n, \cF^{-1} \big( |\xi|^s\widehat
    \psi \big) \ra  = \la f_0, \cF^{-1} \big( |\xi|^s\widehat
    \psi \big) \ra 
    =\la\widehat f_0,|\xi|^s\widehat\psi\ra \\
  & =\la |\xi|^s\widehat f_0,\psi\ra.
\end{align*}
The conclusion follows from the density of $\cS_\infty\subseteq L^2$.
\end{proof}

\proof[Proof of Theorem \ref{main-2}]
We first prove the second part of the theorem. Recall that
$m=\lfloor s-\frac12\rfloor$ is the integer part of $s-\frac12$.  Given $f$ defined
as in \eqref{PW-poly-eq2}, we see that for every $\eps>0$
\begin{align*}
 \left|\frac{1}{\sqrt{2\pi}}\int_{-a}^a
   g(\xi) \big( e^{iz\xi}-P_m(iz\xi) \big)\,d\xi\right|
&\leq
 \left(\frac{1}{\sqrt{2\pi}}\int_{-a}^a|\xi|^{2s}|g(\xi)|^2\,
   d\xi\right)^{\frac12}\left(\int_{-a}^a
   |\xi|^{-2s}|e^{iz\xi}-P_m(iz\xi)|^2\,d\xi\right)^{\frac12}\\ 
 &\leq
 Ce^{a|z|}|z|^{m+1}
 \|g\|_{L^2_a(|\xi|^{2s})}\left(\int_{-a}^a|\xi|^{2(m-s+1)}
\,d\xi\right)^{\frac12}\\  
 & \le C_\eps e^{(a+\eps)|z|}
\end{align*}
since $2(m-s+1)>-1$ and where we have used the inequality
$ \sum_{j=0}^{+\infty} r^j/(j+m+1)! \le e^r$, for $r>0$. 
Hence, $f$ is
well-defined, clearly entire and it belongs to $\cE_a$. 
Since it is clear that
$P_{f;m;0}=0$, it remains to show that $f_0\in E^{s,2}$. We have
\begin{align*}
  f_0^{(m)}(x+h)-f_0^{(m)}(x)
  &=\frac{1}{\sqrt{2\pi}}\int_{-a}^a g(\xi)(i\xi)^m (e^{i(x+h)\xi}-e^{ix\xi})\, d\xi\\
&=\frac{1}{\sqrt{2\pi}}\int_{-a}^ag(\xi)(i\xi)^m e^{ix\xi}(e^{ih\xi}-1)\, d\xi, 
\end{align*}
so that, 
\begin{align*}
  |f_0^{(m)}(x+h)-f_0^{(m)}(x)|
  &\leq \frac{1}{\sqrt{2\pi}}\int_{-a}^a |\xi|^{s}|g(\xi)|e^{ih\xi}-1||\xi|^{m-s}\, d\xi\\
 &\leq C \|g\|^2_{L^2_a(|\xi|^{2s})}\bigg(\int_{-a}^a |e^{ih\xi}-1|^2|\xi|^{2(m-s)}\, d\xi\bigg)^\frac12\\
 &\leq C |h|^{s-\frac12-m}\|g\|^2_{L^2_a(|\xi|^{2s})}\bigg(\int_{\mathbb R} |e^{it}-1|^2|t|^{2(m-s)}\, dt\bigg)^\frac12\\
 &\leq C |h|^{s-\frac12-m}\|g\|^2_{L^2_a(|\xi|^{2s})}\,.
\end{align*}

Hence,
\begin{align*}
\sup_{h\in\mathbb R,h\neq
  0}\frac{|f_0^{(m)}(x+h)-f_0^{(m)}(x)|}{|h|^{s-\frac12-m}}
&\leq C\|g\|^2_{L^2_a(|\xi|^{2s})}
\end{align*}
and we conclude that $f_0\in\dot{\Lambda}^{s-\frac12}$ as we wished to
show (see \cite[Proposition 1.4.5]{Grafakos}).  
Next, we need to show that $\Delta^{\frac s2} f_0\in L^2$ and
$\|\Delta^{\frac s2}f_0\|_{L^2}=\|g\|_{L^2_a(|\xi|^{2s})}$.

To this end, let
 $\{\psi_n\} \subseteq\cS$ be such
 that $\psi_n\to g$ in $L^2_a(|\xi|^{2s})$ and define $\vp_n$
as in  \eqref{PW-poly-eq2}, that is,
$$
\vp_n(x) = \frac{1}{\sqrt{2\pi}}\int_{-a}^a \psi_n(\xi)
\big(e^{ix\xi}-P_m(ix\xi) \big)\, d\xi \,,
$$
where, we recall, $m=\lfloor s-\frac12\rfloor$. 
Observe that, by \eqref{Tg}, $\vp_n(x) = \la T\psi_n,
e^{ix(\cdot)}\ra$.  Given $\eta\in\cS$, using Lemma \ref{lemma-Tg}, we
have that all integrals in 
the equalities that follow converge absolutely and we have that
\begin{align*}
   \lim_{n\to\infty} \la \vp_n, \eta\ra
    & = \lim_{n\to\infty}
     \frac{1}{\sqrt{2\pi}}\int_{-a}^a \psi_n(\xi) \big(\widehat
    \eta(\xi) -P_{m;\widehat\eta;0}(\xi)  \big) \, d\xi dx\\
    & = \lim_{n\to\infty}  \la T\psi_n, \widehat\eta \ra  =  \la Tg, \widehat\eta \ra = \la f_0 , \eta\ra \,.
  \end{align*}
Therefore, 
$\vp_n\to f_0 $ in $\cS'$.   Moreover, 
we have that $D^{m+1} \vp_n = \cF^{-1} \big( (i\xi)^{m+1} \psi_n\big)$
and
setting $s':=s-(m+1)\in
  (-\frac12,\frac12)$ we 
 have that, on Schwartz functions,
$\Delta^{\frac s2}  = R^{m+1} \Delta^{\frac{s'}{2}} D^{m+1}$.
Therefore,
$$
\| \Delta^{\frac s2} \vp_n\|_{L^2} = 
\| \Delta^{\frac{s'}{2}} D^{m+1} \vp_n\|_{L^2} = \| \cF\big(  D^{m+1}
\vp_n\big) \|_{L^2(|\xi|^{2s'})} 
= \| \psi_n\|_{L^2(|\xi|^{2s})}  \,.
 $$
It follows that $\{ \Delta^{\frac s2} \vp_n\} $ is a Cauchy sequence in $L^2$
and that
$$
\|\Delta^{\frac  s2}f_0\|_{L^2}
= \lim_{n\to\infty}  \| \psi_n\|_{L^2(|\xi|^{2s})} =
\|g\|_{L^2_a(|\xi|^{2s})} \,. 
$$
\ms

Let us consider now $f\in PW^s_a$. From Lemmas \ref{cpt-support}  and
\ref{f-not-hat-weighted-L2} we
know 
that $\supp\widehat f_0\subseteq [-a,a]$, and that there exists a
unique  $g\in L^2_a(|\xi|^{2s})$ such that 
$\widehat f_0 =Tg$ in $\cS'$. 
Hence, the function
$$
\widetilde f(z):=\frac{1}{\sqrt{2\pi}}\int_{-a}^a  g (\xi)
\big( e^{iz\xi}-P_m(iz\xi) \big)\,d\xi
$$
is a well-defined function in $PW^s_a$ by the first part of the proof. Moreover, 
$$
D^{m+1}_z\widetilde f_0(x)=\frac{1}{\sqrt{2\pi}}\int_{-a}^a (i\xi)^{m+1}g(\xi) e^{ix\xi}\, d\xi,
$$
so that $\cF(D^{m+1}\tilde{f_0})= (i\xi)^{m+1} g$. On the other hand, we also have that
$$
\cF( f^{(m+1)}_0)=(i\xi)^{m+1}\widehat f_0=(i\xi)^{m+1}Tg.
$$
Now, for $\psi\in\cS$,
\begin{align*}
 \la (i\xi)^{m+1}Tg,\psi\ra&= \la Tg,(i\xi)^{m+1}\psi\ra\\
 &=\frac{1}{\sqrt{2\pi}}\int_{-a}^a g(\xi) \big((i\xi)^{m+1}\psi(\xi)-P_{(i\xi)^{m+1}\psi;m;0}(\xi)\big)\, d\xi\\
 &=\frac{1}{\sqrt{2\pi}}\int_{-a}^a g(\xi) (i\xi)^{m+1}\psi(\xi)\, d\xi\\
 &= \la \cF(D^{m+1}\tilde{f_0}),\psi\ra,
\end{align*}
hence $\cF(f^{(m+1)}_0)=\cF(D^{m+1}\tilde{f_0})$, i.e.,
$f_0^{(m+1)}=\widetilde f^{(m+1)}_0$. Therefore, $f$ and $\widetilde f$
coincide up to a polynomial of degree at most $m$. Since
$P_{f;m;0}=P_{\widetilde f;m;0}\equiv 0$, we get $\widetilde f=f$; in
particular, 
$$
f(z)=\frac{1}{\sqrt{2\pi}}\int_{-a}^a g(\xi) \big(e^{iz\xi}- P_{m}(iz\xi)\big)\, d\xi
$$
and $\|\Delta^{\frac s2}f_0\|_{L^2}=\|g\|_{L^2_a(|\xi|^{2s})}$, as we
wished to show.
\epf\ms

As in the case $s<\frac12$, we have the following
\begin{cor}\label{Pw-kernel}
For $s>\frac12, s-\frac12\notin\bbN$, the spaces $PW^s_a$,  are reproducing kernel Hilbert
 spaces with reproducing kernel 
 \begin{equation*} 
  K(w,z)=\frac{1}{2\pi} \int_{-a}^a \big(e^{iw\xi}-P_m(iw\xi)\big)
\big(e^{-i\bar z\xi}-P_m(-i\bar z\xi)\big)|\xi|^{-2s}\, d\xi \,.
 \end{equation*}
\end{cor}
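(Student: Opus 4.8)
The plan is to mimic the proof of Corollary~\ref{kernel-s-small}: first deduce from Theorem~\ref{main-2} that $PW^s_a$ is a Hilbert space on which point-evaluations are bounded, hence a reproducing kernel Hilbert space, and then identify the kernel by transporting the reproducing identity to the model space $L^2_a(|\xi|^{2s})$.

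For the first step, observe that $PW^s_a$ carries the inner product $\la f\,|\,h\ra_{PW^s_a}=\la\Delta^{\frac s2}f_0\,|\,\Delta^{\frac s2}h_0\ra_{L^2}$, which induces its norm. By Theorem~\ref{main-2} the map $\Phi\colon f\mapsto g$, where $g\in L^2_a(|\xi|^{2s})$ is the unique function with $\widehat{f_0}=Tg$, is a linear bijection of $PW^s_a$ onto $L^2_a(|\xi|^{2s})$ preserving norms; hence a Cauchy sequence in $PW^s_a$ is carried to a Cauchy sequence in $L^2_a(|\xi|^{2s})$, whose limit fed into \eqref{PW-poly-eq2} returns the limit in $PW^s_a$, so $PW^s_a$ is complete. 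Point-evaluations are bounded since, as in the proof of Theorem~\ref{main-2}, for fixed $z\in\bbC$
\[
|f(z)|=\Big|\frac{1}{\sqrt{2\pi}}\int_{-a}^a g(\xi)\big(e^{iz\xi}-P_m(iz\xi)\big)\,d\xi\Big|\le C_z\,\|g\|_{L^2_a(|\xi|^{2s})}=C_z\,\|f\|_{PW^s_a}\,.
\]
Finally, since $\Phi$ is a bijective linear isometry of Hilbert spaces, polarization promotes the norm identity to an inner product identity:
\[
\la f\,|\,h\ra_{PW^s_a}=\int_{-a}^a g_f(\xi)\,\overline{g_h(\xi)}\,|\xi|^{2s}\,d\xi\,,\qquad g_f=\Phi(f),\ g_h=\Phi(h)\,.
\]

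To identify $K_z$, fix $z\in\bbC$ and compare $f(z)=\la f\,|\,K_z\ra_{PW^s_a}$, rewritten through the previous display, with the integral representation \eqref{PW-poly-eq1} of $f(z)$. Since $g_f$ ranges over all of $L^2_a(|\xi|^{2s})$ as $f$ ranges over $PW^s_a$, this forces $\overline{g_{K_z}(\xi)}\,|\xi|^{2s}=\frac{1}{\sqrt{2\pi}}\big(e^{iz\xi}-P_m(iz\xi)\big)$ a.e. on $[-a,a]$, that is, using $\overline{e^{iz\xi}}=e^{-i\bar z\xi}$ and $\overline{P_m(iz\xi)}=P_m(-i\bar z\xi)$ for $\xi\in\bbR$,
\[
g_{K_z}(\xi)=\frac{1}{\sqrt{2\pi}}\,|\xi|^{-2s}\big(e^{-i\bar z\xi}-P_m(-i\bar z\xi)\big)\chi_{[-a,a]}(\xi)\,.
\]
One checks $g_{K_z}\in L^2_a(|\xi|^{2s})$: near the origin $|e^{-i\bar z\xi}-P_m(-i\bar z\xi)|\le C_z|\xi|^{m+1}$, so the norm integral is controlled by $\int_{-a}^a|\xi|^{2(m+1-s)}\,d\xi$, which converges because $m=\lfloor s-\frac12\rfloor$ (with $s-\frac12\notin\bbN$) gives $m+1-s>-\frac12$, i.e. $2(m+1-s)>-1$; away from the origin the integrand is bounded. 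Hence by Theorem~\ref{main-2} the function $K_z:=\Phi^{-1}(g_{K_z})$, namely
\[
K_z(w)=\frac{1}{\sqrt{2\pi}}\int_{-a}^a g_{K_z}(\xi)\big(e^{iw\xi}-P_m(iw\xi)\big)\,d\xi=\frac{1}{2\pi}\int_{-a}^a|\xi|^{-2s}\big(e^{iw\xi}-P_m(iw\xi)\big)\big(e^{-i\bar z\xi}-P_m(-i\bar z\xi)\big)\,d\xi\,,
\]
lies in $PW^s_a$ and, by construction of $g_{K_z}$, satisfies $\la f\,|\,K_z\ra_{PW^s_a}=f(z)$ for every $f\in PW^s_a$. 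This is the asserted reproducing kernel.

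Being a corollary of Theorem~\ref{main-2}, the statement presents no deep obstacle; the points that need care are exactly those isolated above: the integrability of $g_{K_z}$ at $\xi=0$, which is where the hypothesis $s-\frac12\notin\bbN$ and the choice $m=\lfloor s-\frac12\rfloor$ enter; the bookkeeping of the polynomial subtraction $P_m(iz\xi)$ and the conjugation $\overline{P_m(iz\xi)}=P_m(-i\bar z\xi)$ when one passes from the bilinear pairing defining $T$ to the Hermitian inner product of $PW^s_a$; and the observation that the norm identity of Theorem~\ref{main-2} upgrades, by polarization, to the inner product identity used above.
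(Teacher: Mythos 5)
Your proof is correct and follows the same route as the paper: use Theorem~\ref{main-2} to see that the Fourier transform is an isometry onto (the $T$-image of) $L^2_a(|\xi|^{2s})$, deduce completeness and bounded point-evaluations from the integral representation, and then read off the kernel by comparing the reproducing identity against \eqref{PW-poly-eq1}. You are more careful than the paper in two small ways that are worth noting: you keep the distinction between $\widehat{f_0}=Tg$ and the density $g$ itself (the paper writes $\widehat{f_0}(\xi)$ in the key display where it really means $g(\xi)$), and you explicitly invoke polarization and check the integrability of $g_{K_z}$ at $\xi=0$ under $m=\lfloor s-\tfrac12\rfloor$, $s-\tfrac12\notin\bbN$ — details the paper leaves implicit.
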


Notice that, since $K_z=K(\cdot,z) \in PW^s_a$, $P_{K_z;m;0}=0$, that
is, $K_z$ vanishes of order $m$ at the origin, where $m=\lfloor
s-\frac12\rfloor$. 

\proof
From the previous theorem we know that the Fourier transform is a
surjective isometry from $PW^s_a$ onto $T(L^2_a(|\xi|^{2s}))$, the
closed subspace of $\cS'$ endowed with norm
$\|Tg\|:=\|g\|_{L^2_a(|\xi|^{2s})}$. Therefore, $PW^s_a$ are Hilbert
spaces.

Similarly to the proof of Corollary \ref{kernel-s-small} we deduce
from the representation formula \eqref{PW-poly-eq1} that the spaces
$PW^s_a$ are reproducing kernel  Hilbert spaces. Then, 
\begin{align*}
  \frac{1}{\sqrt{2\pi}}\int_{-a}^a \widehat{f_0}(\xi)
 \big(e^{iz\xi}-P_m(iz\xi)\big)\, d\xi =
 f(z)&=\left<f \,|\, K_z\right>_{PW^s_a}= \int_{-a}^a
 |\xi|^{2s}\widehat{f_0}(\xi) \overline{\widehat{(K_z)_0}(\xi)} \,
 d\xi
 \end{align*}
and therefore,
$$
\widehat{(K_z)_0}(\xi)= \frac{1}{\sqrt{2\pi}} \big(
e^{-i\bar z\xi}-P_m(-i\bar z\xi) \big)|\xi|^{-2s} \,.
$$
 From this identity and \eqref{PW-poly-eq1}, the conclusion follows.
\epf

The following lemma is obvious and we leave the details to the reader
(or see the proof of Lemma \ref{density-bernstein}).
\begin{lem}\label{density-PW}
 The space $ \{f\in PW_a : f_0\in\mathcal S_\infty\}$  is dense in $PW_a$.
\end{lem}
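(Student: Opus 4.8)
The plan is to push everything to the Fourier side via the classical Paley--Wiener theorem and then invoke density of test functions in $L^2$. Recall that that theorem identifies $PW_a$ isometrically with $L^2_a=L^2([-a,a])$ through $g\mapsto f$, where $f(z)=\frac{1}{\sqrt{2\pi}}\int_{-a}^a g(\xi)e^{iz\xi}\,d\xi$, $\widehat{f_0}=g$ and $\|f\|_{PW_a}=\|g\|_{L^2_a}$. Under this correspondence I would first observe that $f_0\in\cS_\infty$ whenever its Fourier transform $g=\widehat{f_0}$ is smooth, compactly supported in $[-a,a]$, and vanishes to infinite order at the origin: indeed $g\in C^\infty_c\subseteq\cS$ forces $f_0=\cF^{-1}g\in\cS$, and the condition $\int_{\bbR} x^k f_0(x)\,dx=0$ for all $k\in\bbN_0$ is, up to constants, just $g^{(k)}(0)=0$ for all $k$. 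Hence the subspace in question contains the image under $\cF^{-1}$ of every such $g$.

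Next I would take an arbitrary $f\in PW_a$ with $g_0:=\widehat{f_0}\in L^2_a$. Since $(-a,a)\setminus\{0\}$ is open and $\{-a,0,a\}$ is a null set, $C^\infty_c\big((-a,a)\setminus\{0\}\big)$ is dense in $L^2_a$; choose $g_n\in C^\infty_c\big((-a,a)\setminus\{0\}\big)$ with $g_n\to g_0$ in $L^2_a$. Each $g_n$ is smooth, compactly supported inside $(-a,a)$, and identically $0$ near the origin, so in particular it vanishes to infinite order there; by the previous step the associated entire function $f_n(z)=\frac{1}{\sqrt{2\pi}}\int_{-a}^a g_n(\xi)e^{iz\xi}\,d\xi$ belongs to $\cE_a$ and satisfies $(f_n)_0\in\cS_\infty$, i.e. $f_n\in\{f\in PW_a:f_0\in\cS_\infty\}$. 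By the Paley--Wiener isometry $\|f_n-f\|_{PW_a}=\|g_n-g_0\|_{L^2_a}\to 0$, which proves the density.

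There is essentially no obstacle here: the whole argument reduces to the standard fact that $C^\infty_c$ of an open set is dense in the corresponding $L^2$, and the only point requiring a moment's thought is the elementary dictionary between ``$f_0\in\cS_\infty$'' and ``$\widehat{f_0}\in C^\infty_c$ vanishing to infinite order at $0$''. Alternatively one can carry out the same scheme directly on the entire-function side, mollifying and truncating exactly as in the proof of Lemma \ref{density-bernstein}, but the Fourier-side version above is the shortest route.
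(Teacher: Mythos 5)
Your proof is correct. It takes a genuinely different (and shorter) route than the one the paper gestures at: the paper declares the lemma ``obvious'' and points to the proof of Lemma \ref{density-bernstein}, which works on the function side in two stages -- first shrinking the type (via a dilation $f\mapsto f^r$) and mollifying the Fourier transform to land in $\cS$, then splitting off a small bump $\eta_{(\delta)}\widehat{f_0}$ near the origin and estimating $\|\cF^{-1}(\eta_{(\delta)}\widehat{f_0})\|_{L^p}$. That machinery is built to handle every $p\in(1,\infty)$, where $\cF$ is not an isometry onto an $L^p$ space. You instead exploit the Plancherel isometry $\cF:PW_a\to L^2_a$ available precisely because $p=2$: density of $C^\infty_c\big((-a,a)\setminus\{0\}\big)$ in $L^2_a$ pulls back immediately, and the dictionary ``$f_0\in\cS_\infty\ \Leftrightarrow\ \widehat{f_0}\in C^\infty_c$ vanishes to infinite order at $0$'' (which you state correctly, since $\int x^kf_0\,dx$ is a constant times $(\widehat{f_0})^{(k)}(0)$) finishes it. So your argument is cleaner for this lemma but does not generalize to Lemma \ref{density-bernstein}, whereas the paper's method does both. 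The one cosmetic point: you remark that $g$ only needs to vanish to infinite order at $0$, but your actual construction uses functions vanishing identically near $0$, which is even stronger and makes the moment condition trivially obvious -- no issue, just a slight mismatch between the stated criterion and what you use.
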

\ms

We now show that the fractional Laplacian $\Delta^{\frac s2}$ induces
a surjective isometry from $PW^s_a$ onto $PW_a$.

\begin{thm}\label{hilbert-isometry}
Let $s>0$ and assume $s-\frac12\notin\bbN_0$. Then, the operator
$\Delta^{\frac s2} : PW^s_a\to PW_a$
is a surjective
isometry, whose inverse is $\cI_s$ if $0<s<\frac12$, whereas if
$s>\frac12$ the inverse is 
given by
 \begin{equation}\label{Delta-s-mezzi-inverse}
\big( \Delta^{\frac s2}\big)^{-1} h(z) 
= \frac{1}{\sqrt{2\pi}} \int_{-a}^a \widehat h_0(\xi) |\xi|^{-s} \big(
e^{iz\xi} - P_m(iz\xi)\big)\, d\xi \,,
 \end{equation}
with $h\in PW_a$.
\end{thm}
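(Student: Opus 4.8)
The plan is to transport everything to the Fourier side and use that $\Delta^{\frac s2}$ corresponds to multiplication by $|\xi|^s$. Recall that, by Theorem \ref{real-hom-sob}, $\|f\|_{PW^s_a}=\|f_0\|_{E^{s,2}}=\|\Delta^{\frac s2}f_0\|_{L^2}$, whereas $\|g\|_{PW_a}=\|g_0\|_{L^2}$. Hence, once we know that $\Delta^{\frac s2}f_0$ (defined as an $L^2$ function on $\bbR$ through the $E^{s,2}$ structure) is the restriction to $\bbR$ of an entire function $\Delta^{\frac s2}f\in PW_a$, the identity $\|\Delta^{\frac s2}f\|_{PW_a}=\|f\|_{PW^s_a}$ is automatic; so the isometry property is \emph{not} the real content, the analytic continuation is. Since multiplication by $|\xi|^s$ is a surjective isometry $L^2_a(|\xi|^{2s})\to L^2_a$ with inverse multiplication by $|\xi|^{-s}$, the remaining work is to realize $\Delta^{\frac s2}$ and its inverse concretely, using the Fourier-side descriptions of $PW^s_a$ (Theorems \ref{main-1} and \ref{main-2}) and of $PW_a$ (classical Paley--Wiener).

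For $0<s<\frac12$: given $f\in PW^s_a$, Theorem \ref{main-1} gives $\widehat{f_0}\in L^2_a(|\xi|^{2s})$, so $\cF(\Delta^{\frac s2}f_0)=|\xi|^s\widehat{f_0}\in L^2_a$ is supported in $[-a,a]$, and the classical Paley--Wiener theorem supplies the desired entire $\Delta^{\frac s2}f\in PW_a$ with restriction $\Delta^{\frac s2}f_0$; by the remark above this makes $\Delta^{\frac s2}$ an isometry, hence injective. For surjectivity, given $h\in PW_a$ one checks $|\xi|^{-s}\widehat{h_0}\chi_{[-a,a]}\in L^2_a(|\xi|^{2s})$, so Theorem \ref{main-1} produces an element of $PW^s_a$ whose trace has Fourier transform $|\xi|^{-s}\widehat{h_0}$ — this trace is $\cI_s h_0$, using that $\cI_s:L^2\to L^{2^*}$ (Theorem \ref{riesz-potential}(i)) and $\cF\cI_s=|\xi|^{-s}\cF$ — and applying $\Delta^{\frac s2}$ returns $h$; symmetrically $\cI_s\Delta^{\frac s2}f=f$. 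Thus $\Delta^{\frac s2}$ is a bijective isometry with inverse $\cI_s$.

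For $s>\frac12$, $s-\frac12\notin\bbN$, $m=\lfloor s-\frac12\rfloor$: given $f\in PW^s_a$, Lemmas \ref{cpt-support} and \ref{f-not-hat-weighted-L2} provide a unique $g\in L^2_a(|\xi|^{2s})$ with $\widehat{f_0}=Tg$, and Lemma \ref{Lemma3.3} gives $\cF(\Delta^{\frac s2}f_0)=|\xi|^s\widehat{f_0}$; the computation inside the proof of that lemma (where $P_{|\xi|^s\psi;m;0}=0$ on $\widehat{\cS_\infty}$) identifies this with $|\xi|^s g\in L^2_a$, supported in $[-a,a]$. The classical Paley--Wiener theorem then produces $\Delta^{\frac s2}f\in PW_a$ with $\cF((\Delta^{\frac s2}f)_0)=|\xi|^s g$, so $\Delta^{\frac s2}$ is an isometry. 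Writing $S$ for the operator on the right-hand side of \eqref{Delta-s-mezzi-inverse}: given $h\in PW_a$, set $g:=|\xi|^{-s}\widehat{h_0}\chi_{[-a,a]}\in L^2_a(|\xi|^{2s})$; then $Sh$ is exactly the function \eqref{PW-poly-eq2} attached to this $g$, hence $Sh\in PW^s_a$ by Theorem \ref{main-2} with representing function $g$, and therefore $\cF((\Delta^{\frac s2}Sh)_0)=|\xi|^s g=\widehat{h_0}$, i.e.\ $\Delta^{\frac s2}(Sh)=h$. With injectivity of the isometry $\Delta^{\frac s2}$ this proves it bijective with inverse $S$; the relation $S(\Delta^{\frac s2}f)=f$ follows too, since $S(\Delta^{\frac s2}f)$ has representing function $|\xi|^{-s}(|\xi|^s g)=g$ and vanishes to order $m$ at the origin, so it equals $f$ by the uniqueness in Lemma \ref{f-not-hat-weighted-L2}.

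The only real obstacle, visibly in the second case, is establishing that $\Delta^{\frac s2}f_0$ extends to an entire function of exponential type $a$ and not merely to an $L^2$ function on the line; this is precisely what Lemma \ref{Lemma3.3} (which computes $\cF(\Delta^{\frac s2}f_0)$ and shows it is a compactly supported $L^2$ function) is designed to supply, after which everything reduces to bookkeeping with the Fourier isometries of Theorems \ref{main-1}--\ref{main-2}, with no new estimate required.
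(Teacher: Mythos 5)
Your argument for the isometry is the same as the paper's: Lemma~\ref{Lemma3.3} gives $\cF(\Delta^{\frac s2}f_0)=|\xi|^s\widehat{f_0}\in L^2_a$, the classical Paley--Wiener theorem supplies the entire extension, and the norm identity is automatic. Where you genuinely diverge is the surjectivity. The paper runs a density argument: it takes $\{\vp_n\}\subseteq\{h\in PW_a:h_0\in\cS_\infty\}$ converging to $h$ (Lemma~\ref{density-PW}), forms the candidate preimages $\Phi_n$ via \eqref{lapl-inverse}, verifies $\Delta^{\frac s2}\Phi_n=\vp_n$ by \cite[Corollary~3.4]{MPS-sob} applied to Schwartz data, and lets the Cauchy sequence $\{\Phi_n\}$ converge in $PW^s_a$. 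You instead bypass the approximation entirely: you feed $g:=|\xi|^{-s}\widehat{h_0}\chi_{[-a,a]}\in L^2_a(|\xi|^{2s})$ directly into the converse half of Theorem~\ref{main-2} to produce $Sh\in PW^s_a$ with representing function $g$, then read off $\cF\bigl((\Delta^{\frac s2}Sh)_0\bigr)=|\xi|^s g=\widehat{h_0}$ from Lemma~\ref{Lemma3.3}, so $\Delta^{\frac s2}Sh=h$, and uniqueness in Lemma~\ref{f-not-hat-weighted-L2} gives $S\Delta^{\frac s2}f=f$. This is correct and shorter, at the cost of leaning on the full strength of Theorem~\ref{main-2} (whose own proof already contains the density/Cauchy-sequence mechanism); the paper's version is more self-contained in that it only invokes $\Delta^{\frac s2}$ on Schwartz functions, where its meaning is unambiguous, and builds up the general case explicitly. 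Both routes are valid; you also treat the easier range $0<s<\tfrac12$ explicitly, which the paper dismisses with a remark.
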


\proof
We only need to prove the theorem in the case $s>\frac12,
s-\frac12\notin\bbN$.  We recall that from Lemma \ref{Lemma3.3}, if
$f\in PW^s_a$, $\cF(\Delta^{\frac s2} f_0) = |\xi|^s \widehat f_0 \in L^2_a$. Hence, 
the map $f\mapsto \Delta^{\frac s2}f_0$  is clearly an isometry, and
$\supp  \big( \cF(\Delta^{\frac s2} f_0) \big) \subseteq [-a,a]$.
By the classical Paley--Wiener theorem, $\Delta^{\frac s2} f_0$
extends to a function in $PW_a$, that we denote by $\Delta^{\frac s2}
f$.

 Let us focus on
 the surjectivity. Let $h\in PW_a$, then, by the previous lemma, there
 exists a sequence $\{\vp_n\} \subseteq\{h\in PW_a :
 h_0\in \cS_\infty\}$ such that $\|h-\vp_n\|_{PW_a}\to 0$
 as $n\to\infty$.   Set
 \begin{equation}\label{lapl-inverse}
 \Phi_n(z) =
\frac{1}{\sqrt{2\pi}}\int_{-a}^a \widehat{
   \vp_n}(\xi)|\xi|^{-s} \big( e^{iz\xi}-P_m(iz\xi) \big) \, d\xi \,.
 \end{equation}
We observe that, 
since $\vp_n\in\cS_\infty$,  we can write
$$ 
\Phi_n(z) = \frac{1}{\sqrt{2\pi}}\int_{-a}^a \widehat{
   \vp_n}(\xi)|\xi|^{-s}  e^{iz\xi} \, d\xi 
-\frac{1}{\sqrt{2\pi}}\int_{-a}^a \widehat{
   \vp_n}(\xi)|\xi|^{-s}P_m(iz\xi)  \, d\xi 
$$
since both 
the integrals converge absolutely.   
We are going to show that $\Phi_n\in PW^s_a$, $\{\Phi_n\}$ is a Cauchy sequence in
 $PW^s_a$,
and that $\Delta^{\frac s2}
\Phi_n\to g$ in $L^2$.  From these facts the surjectivity follows at once.
As in the proofs of Theorems \ref{main-1} and
\ref{main-2}, we see that $\Phi_n\in PW^s_a$.  Moreover, using
\cite[Corollary 3.4]{MPS-sob} and the fact that $\vp_n\in\cS_\infty$, we see that
$$
\Delta^{\frac s2} \Phi_n = \Delta^{\frac s2} \big( \cF^{-1} (\widehat
\vp_n|\xi|^{-s})\big)  = \vp_n\,.
$$
Hence, $\Delta^{\frac s2} \Phi_n \to g$ in $L^2$, and the surjectivity
follows. 

In order to show that the inverse of $\Delta^{\frac s2}$ has the
expression \eqref{Delta-s-mezzi-inverse}, we observe that $\widehat
h_0(\xi) |\xi|^{-s}  \in L^2_a(|\xi|^{2s})$, so that arguing as in the
proof of Theorem \ref{main-2}, we see that 
$F=\big( \Delta^{\frac s2}\big)^{-1} h \in PW^s_a$.
Now, if $\{g_n\}\subseteq C^\infty_c \big(\{ \delta_n\le|\xi|\le
a-\delta_n\}\big)$ are such that $\delta_n\to0$ and $g_n\to 
\widehat
h_0(\xi) |\xi|^{-s}$ in $L^2_a(|\xi|^{2s})$, using
\cite[Corollary 3.4]{MPS-sob}  again we have
\begin{align*}
\cF\big( \Delta^{\frac s2} F_0\big) (t)  & = \lim_{n\to\infty} 
\cF\Big( \Delta^{\frac s2} 
\frac{1}{\sqrt{2\pi}} \int_{-a}^a g_n(\xi)  \big(
                                           e^{ix\xi} - P_m(ix\xi)\big)\, d\xi \Big) (t)\\
  & = 
\lim_{n\to\infty} \cF\Big(  \Delta^{\frac s2} 
\frac{1}{\sqrt{2\pi}} \int_{-a}^a g_n(\xi)  
e^{ix\xi} \, d\xi \Big) (t)\\
& = \lim_{n\to\infty}  |t|^s g_n(t) = \widehat h_0 (t)
\,. 
\end{align*}
Thus, $\Delta^{\frac s2} F=h$ and the surjectivity follows. \qed
\ms

\section{Fractional Bernstein spaces}\label{Bsp}

In this section we study the fractional Bernstein spaces
and we first show
 that the spaces $\cB^{s,p}_a$ are isometric to the
classical Bernstein spaces $\cB^{p}_a$. The proof is similar to the
Hilbert case, but we have to overcome the fact that Plancherel and
Parseval's formulas are no longer available. 
\smallskip

We need the following density lemma.
\begin{lem}\label{density-bernstein}
Let $1<p<\infty$.   Then, the space $\cT=\{f\in \cE_a:
f_0\in\cS_\infty\}$ is dense in $\cB^p_a$. 
\end{lem}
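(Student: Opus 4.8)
The plan is to prove density of $\cT=\{f\in\cE_a:f_0\in\cS_\infty\}$ in $\cB^p_a$ by a convolution-and-cutoff argument on the Fourier side. Recall that by the Paley--Wiener characterization of $\cB^p_a$, a function $f\in\cB^p_a$ is precisely one whose restriction $f_0$ lies in $L^p$ with $\supp\widehat{f_0}\subseteq[-a,a]$, and $\|f\|_{\cB^p_a}=\|f_0\|_{L^p}$. So it suffices to approximate such an $f_0$ in $L^p$-norm by functions in $\cE_a$ whose restrictions are in $\cS_\infty$, i.e. whose Fourier transforms are smooth, compactly supported in $[-a,a]$, and vanish to infinite order at the origin.

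First I would regularize by convolution to gain smoothness: given $f\in\cB^p_a$ and $\eta\in\cS_\infty$ with $\int\eta=1$ (more precisely, pick $\eta$ whose Fourier transform is a bump equal to $1$ near the origin but vanishing to infinite order at $0$ is impossible — so instead proceed differently). The cleaner route: fix $\psi\in C^\infty_c(\bbR)$ with $\psi\equiv1$ on $[-1,1]$, $\supp\psi\subseteq[-2,2]$, and for small $\delta>0$ consider the multiplier $m_\delta(\xi)=\psi(\xi/\delta)$ which is $1$ near the origin. Then $\widehat{f_0}\cdot(1-m_\delta)$ is supported in $\{|\xi|\ge\delta\}\cap[-a,a]$ and is a compactly supported distribution there; but we still need smoothness and $L^p$ convergence. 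So combine two operations: (a) dilation — replace $f_0(x)$ by $f_{0,\lambda}(x)=f_0(\lambda x)$ with $\lambda<1$ close to $1$, which has Fourier transform supported in $[-\lambda a,\lambda a]$, strictly inside $[-a,a]$, and converges to $f_0$ in $L^p$ as $\lambda\to1$ (this uses continuity of dilations on $L^p$); (b) then convolve $f_{0,\lambda}$ with a kernel $\kappa_\eps$ whose Fourier transform is a $C^\infty_c$ bump equal to $1$ on $[-\lambda a,\lambda a]$ and supported in $[-a,a]$: since this multiplier is identically $1$ on the spectrum of $f_{0,\lambda}$, the convolution equals $f_{0,\lambda}$ exactly, so that $f_{0,\lambda}$ is already $C^\infty$ with compactly supported (hence smooth) Fourier transform — actually $f_{0,\lambda}$ itself is then the restriction of an entire function of exponential type $\lambda a<a$, not yet with Fourier transform vanishing at the origin.

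To get the $\cS_\infty$ condition, multiply on the Fourier side by a factor that vanishes to infinite order at $0$. Here is the key step: let $\theta(\xi)=e^{-1/\xi^2}$ for $\xi\neq0$, $\theta(0)=0$, which is $C^\infty$ on $\bbR$ and vanishes to infinite order at the origin, and consider $\widehat{g}(\xi)=\theta(\xi)\cdot\phi(\xi)\cdot\widehat{f_{0,\lambda}}(\xi)$ where $\phi\in C^\infty_c$ equals $1$ on the spectrum of $f_{0,\lambda}$. Since $\theta\to1$ uniformly on $\{|\xi|\ge\delta\}$ as we rescale (replace $\theta$ by $\theta(\xi/\mu)$ and let $\mu\to0$), and since $\widehat{f_{0,\lambda}}$ is supported away from $0$? — no, it need not be. So instead remove a small neighborhood of $0$ first: use the cutoff $(1-m_\delta)$, accepting an $L^p$-error $\|f_0*(\check m_\delta)\|_{L^p}$ which tends to $0$ as $\delta\to0$ because $\check m_\delta\to\delta_0$ suitably in the sense that $f_0*\check{(1-m_\delta)}\to f_0$ in $L^p$ — this is a standard Fourier-multiplier/approximate-identity fact since $1-m_\delta\to1$ boundedly and pointwise, and the associated operators are uniformly bounded on $L^p$ (they are convolution with fixed $\cS$ kernels dilated). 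After this, $\widehat{f_0}(1-m_\delta)$ is supported in $\{\delta\le|\xi|\le a\}$, and now multiplying by $\theta$ (which is bounded, $C^\infty$, and close to $1$ on that set after rescaling) gives a smooth compactly-supported-in-$[-a,a]$ function vanishing to infinite order at $0$; its inverse Fourier transform is the restriction of a function in $\cT$ and the error in $L^p$ is controlled by combining the two steps.

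I expect the main obstacle to be the $L^p$-convergence bookkeeping: unlike the Hilbert case, one cannot simply read off norms from Plancherel, so each cutoff/multiplication step must be justified as a uniformly-$L^p$-bounded Fourier multiplier operator converging strongly to the identity on the relevant subspace of band-limited functions. Concretely, the operators $f_0\mapsto f_0*K_\delta$ with $\widehat{K_\delta}=m_\delta$ or $=1-m_\delta$ are convolution with $\cS$-functions (dilates of a fixed kernel), hence bounded on $L^p$ uniformly in $\delta$, and $f_0*\widehat{(1-m_\delta)}\to f_0$ in $L^p$ by the usual approximate-identity argument applied after noting $\widehat{f_0}$ is compactly supported; similarly the dilation step uses only continuity of $\lambda\mapsto f_0(\lambda\cdot)$ in $L^p$. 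Once these standard facts are in place, the entire-function and exponential-type claims for the approximants follow exactly as in the proof of Theorem~\ref{main-1} (estimating $|g(z)|=|\frac1{\sqrt{2\pi}}\int \widehat g(\xi)e^{iz\xi}d\xi|\le C_\eps e^{(a+\eps)|z|}$ since $\widehat g\in C^\infty_c([-a,a])$), and the $\cS_\infty$ membership of the restriction is immediate from $\widehat g\in C^\infty_c$ vanishing to infinite order at $0$. Assembling the estimates, given $\eps>0$ one first picks $\lambda$, then $\delta$, then the rescaling parameter in $\theta$, so that the total $L^p$-error is $<\eps$, completing the proof.
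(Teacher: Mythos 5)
Your overall roadmap (dilate to shrink the spectrum, smooth the Fourier transform, then cut off a neighbourhood of the origin) is the same as the paper's, but your intermediate step that is supposed to produce \emph{smoothness} of $\widehat{f_{0,\lambda}}$ does nothing, and this leaves a genuine gap.

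In step (b) you convolve $f_{0,\lambda}$ with a kernel $\kappa_\eps$ whose Fourier multiplier equals $1$ on $[-\lambda a,\lambda a]$. Since the multiplier is identically $1$ on the spectrum, you correctly observe that $f_{0,\lambda}*\kappa_\eps=f_{0,\lambda}$; but you then conclude that $f_{0,\lambda}$ has a ``compactly supported (hence smooth)'' Fourier transform. The parenthetical is false: a compactly supported tempered distribution need not be a smooth function (e.g.\ $\widehat{\sinc}$ is a characteristic function). So after this step $\widehat{f_{0,\lambda}}$ is still only a distribution. Multiplying it by $(1-m_\delta)$, or further by $\theta(\cdot/\mu)$, still yields a distribution, not a $C^\infty_c$ function, and its inverse Fourier transform is therefore not in $\cS$, let alone $\cS_\infty$. (The $\theta$-step is also redundant: once you have cut off a neighbourhood of the origin, the Fourier transform already vanishes identically near $0$.) A secondary consequence is that your $L^p$ error estimate for the origin cutoff is not justified at the level of generality you apply it: to bound $\|f_0*\check m_\delta\|_{L^p}$ by Young's inequality one needs $f_0\in L^r$ for some $r<p$ (or some comparable decay), which a general band-limited $L^p$ function need not satisfy; the paper only runs that estimate after $f_0$ has been upgraded to $\cS$.

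The paper's fix is to perform a genuine smoothing on the Fourier side: after dilating to $f^r$ it takes $\widehat{f_0^r}*\vp_\delta$ with $\vp\in C^\infty_c$, $\int\vp=1$, which \emph{is} $C^\infty_c$ and supported in $[-a+\delta',a-\delta']$; on the physical side this is the multiplication $f_0^r\cdot(\cF^{-1}\vp)^\delta$, which lies in $\cS$ because $f_0^r$ is band-limited with all derivatives bounded. Only then does the paper apply the origin cutoff $\eta_{(\delta)}$, using the explicit scaling $\|\cF^{-1}\eta_{(\delta)}\|_{L^q}=\delta^{1-1/q}\|\widehat\chi\widehat\vp\|_{L^q}$ with $q>1$ together with Young's inequality (valid now that $f_0\in\cS$) to control the $L^p$ error. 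If you replace your step (b) with this Fourier-side mollification (or equivalently multiply $f_{0,\lambda}$ by a fixed Schwartz function dilated to vary slowly), your outline becomes correct.
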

The proof of such lemma is somewhat elementary but not immediate and
it is postponed to Section \ref{density-sect}.

\begin{thm}\label{banach-isometry}
Let $s>0$ such that $s-\frac1p\notin\bbN_0$. Then, the operator $\Delta^{\frac s2}$ is  a surjective isometry 
$$
\Delta^{\frac s2} :\Bsp\to \cB^{p}_a
$$
and the inverse is as in \eqref{Delta-s-mezzi-inverse} (with $h\in
\cB^{p}_a$). 
\end{thm}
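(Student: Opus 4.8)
The plan is to mimic the Hilbert-space argument of Theorem \ref{hilbert-isometry}, but replacing Plancherel/Parseval by the $H^p$–$L^p$ mapping properties of the Riesz potential (Theorem \ref{riesz-potential}) and of the Riesz transform, together with the density Lemma \ref{density-bernstein}. First I would treat the subcritical range $0<s<\frac1p$. Given $f\in\Bsp$, we have $f_0\in E^{s,p}$, i.e.\ $f_0\in L^{p^*}$ with $\Delta^{s/2}f_0\in L^p$; writing $f_0=\cI_s(\Delta^{s/2}f_0)$ on $\cS_\infty$ and extending, one checks $\Delta^{s/2}f_0\in L^p$ and, by Lemma \ref{cpt-support} applied with $p$ in place of $2$ (or rather its $E^{s,p}$ analogue, which is what Lemma \ref{cpt-support} already gives via $f_0\in\cS'$ and $\supp\widehat f_0\subseteq[-a,a]$), $\supp\widehat{\Delta^{s/2}f_0}=\supp\bigl(|\xi|^s\widehat f_0\bigr)\subseteq[-a,a]$. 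Hence by the classical Paley--Wiener--Bernstein characterization (second Theorem of the introduction, condition (ii)) the function $\Delta^{s/2}f_0$ is the restriction of an entire function in $\cB^p_a$, which we call $\Delta^{s/2}f$; the map $f\mapsto\Delta^{s/2}f$ is then an isometry by definition of the two norms. For surjectivity: given $h\in\cB^p_a$, use Lemma \ref{density-bernstein} to approximate $h$ in $\cB^p_a$ by $\vp_n\in\cE_a$ with $(\vp_n)_0\in\cS_\infty$, set $\Phi_n:=\cI_s\vp_n$ (which lies in $\Bsp$ by the computation in the proof of Theorem \ref{main-1}, using $\cI_s:H^p\to H^{p^*}$), observe $\Delta^{s/2}\Phi_n=\vp_n$ by \eqref{Delta-s-S-infty} and \cite[Corollary 3.4]{MPS-sob}, and conclude that $\{\Phi_n\}$ is Cauchy in $\Bsp$ (because $\{\vp_n\}$ is Cauchy in $\cB^p_a$ and $\Delta^{s/2}$ is an isometry on its image) with limit $F$ satisfying $\Delta^{s/2}F=h$; the inverse is then $\cI_s$, as claimed.

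Next I would handle the supercritical range $s-\frac1p\in\bbR^+\setminus\bbN$, $m=\lfloor s-\frac1p\rfloor$. Here $f\in\Bsp$ means $f_0\in\dot\Lambda^{s-1/p}$ with $P_{f_0;m;0}=0$ and $\Delta^{s/2}f_0\in L^p$. As in the proof of Theorem \ref{main-2}, factor $\Delta^{s/2}=R^{m+1}\Delta^{s'/2}D^{m+1}$ with $s'=s-(m+1)\in(-\tfrac1p? )$ — more precisely $s'-\frac1p\in(-1,0)$, so that $\Delta^{s'/2}$ is, up to Riesz transforms, a (negative-order) Riesz potential; one then checks $\|\Delta^{s/2}f_0\|_{L^p}=\|\Delta^{s/2}f_0\|_{L^p}$ trivially and, crucially, that $\Delta^{s/2}f_0\in L^p$ together with $\supp\widehat{D^{m+1}f_0}=\supp\bigl((i\xi)^{m+1}\widehat f_0\bigr)\subseteq[-a,a]$ forces $\Delta^{s/2}f_0$ to be the restriction of a function in $\cB^p_a$ (again via the classical characterization), which we call $\Delta^{s/2}f$. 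The map is an isometry by construction. For surjectivity, given $h\in\cB^p_a$ approximate by $\vp_n$ with $(\vp_n)_0\in\cS_\infty$ and set
$$
\Phi_n(z)=\frac{1}{\sqrt{2\pi}}\int_{-a}^a\widehat{(\vp_n)_0}(\xi)|\xi|^{-s}\bigl(e^{iz\xi}-P_m(iz\xi)\bigr)\,d\xi,
$$
which lies in $\Bsp$ by the argument in the proof of Theorem \ref{main-2} (absolute convergence of both pieces since $\vp_n\in\cS_\infty$, and the Lipschitz estimate there adapts verbatim with $L^2_a$-norms replaced by the relevant $H^p$-norms via Theorem \ref{riesz-potential}), and satisfies $\Delta^{s/2}\Phi_n=\vp_n$ by \cite[Corollary 3.4]{MPS-sob}. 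Then $\{\Phi_n\}$ is Cauchy in $\Bsp$ with limit $F$, $\Delta^{s/2}F=h$, and the inverse has the stated form \eqref{Delta-s-mezzi-inverse}.

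The main obstacle, compared to the Hilbert case, is that in the $p\neq2$ setting I cannot read off $\supp\widehat{\Delta^{s/2}f_0}\subseteq[-a,a]$ and the norm identity from a clean Fourier–Plancherel picture; instead I must route everything through $E^{s,p}$, the realization description in Theorem \ref{real-hom-sob}, and the boundedness of $\cI_s$ and $R$ on Hardy spaces, taking care that in the supercritical case the exponent produced after stripping $D^{m+1}$ may land in the range $p\le1$ where $H^p\neq L^p$ — this is exactly the point already navigated in Lemma \ref{cpt-support} and its estimate \eqref{Riesz-potential-estimate}, so I would import that bookkeeping. A secondary subtlety is verifying that $\Phi_n\in\Bsp$ includes checking the vanishing condition $P_{(\Phi_n)_0;m;0}=0$, which is immediate from the definition of $\Phi_n$, and that the Lipschitz/$\dot\Lambda^{s-1/p}$ membership of the limit $F$ is inherited from the $E^{s,p}$ Cauchy estimate rather than proved directly — this follows since $E^{s,p}$ is complete and $\Delta^{s/2}$ is an isometry onto its (closed) image in $\cB^p_a$. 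Once these identifications are in place, the isometry and the explicit inverse drop out exactly as in Theorem \ref{hilbert-isometry}.
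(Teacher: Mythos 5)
Your overall strategy is the same as the paper's: reduce to the density Lemma \ref{density-bernstein}, establish the isometry by showing $\supp\widehat{\Delta^{s/2}f_0}\subseteq[-a,a]$ and invoking the classical characterization of $\cB^p_a$, then pull back a Cauchy sequence through the explicit inverse to get surjectivity, routing the Plancherel steps through Theorem \ref{riesz-potential} and Hardy space estimates. That much is faithful to the paper.

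The genuine gap is in the surjectivity step. You write that $\{\Phi_n\}$ is Cauchy in $\Bsp$ ``with limit $F$ satisfying $\Delta^{s/2}F=h$,'' and later justify this by saying ``$E^{s,p}$ is complete and $\Delta^{s/2}$ is an isometry onto its (closed) image in $\cB^p_a$.'' But completeness of $E^{s,p}$ only gives you a limit $\widetilde F\in E^{s,p}$; it does \emph{not} tell you that $\widetilde F$ is the restriction to $\bbR$ of an entire function of exponential type $a$, and the closedness of $\Delta^{s/2}(\Bsp)$ inside $\cB^p_a$ is exactly equivalent to completeness of $\Bsp$, which at this point in the paper is not yet available (completeness is Theorem \ref{main-3}, and its proof is \emph{deduced} from this theorem via Corollary \ref{unif-conv-comp-subsets}). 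Invoking ``(ii) $\Rightarrow$ (i) of Theorem \ref{main-4}'' would be circular for the same reason. The paper closes this gap with a concrete pointwise estimate: expressing $F_n(z)$ (or $F_n^{(m+1)}(z)$ in the supercritical case) by Parseval as a pairing of $h_n$ against $\cI_{s}$ (or $R^{m+1}\cI_{s'}$) applied to a shifted $\sinc$, then using Theorem \ref{riesz-potential} together with the classical Plancherel--P\'olya and Bernstein inequalities to obtain $|F_n(z)|\le C e^{a|y|}\|(h_n)_0\|_{L^p}$. This shows uniform convergence on compacts to a genuine $F\in\cE_a$ with $F|_{\bbR}=\widetilde F$. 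That estimate is the analytical content of the $p\neq 2$ case and cannot be bypassed by an abstract completeness argument.

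A secondary (fixable) imprecision: in the isometry part you pass directly from $\supp\widehat f_0\subseteq[-a,a]$ to $\supp\widehat{\Delta^{s/2}f_0}=\supp(|\xi|^s\widehat f_0)\subseteq[-a,a]$. Since $\widehat f_0$ is a distribution and $|\xi|^s$ is singular at the origin, the identity $\widehat{\Delta^{s/2}f_0}=|\xi|^s\widehat f_0$ needs the same kind of approximation argument the paper runs (take $\Phi_n\in C^\infty_c(\{\delta_n\le|\xi|\le a\})$ with $\Phi_n\to\widehat f_0$ in $\cS'$, set $\eta_n=\cF^{-1}\Phi_n\in\cS_\infty$, and conclude $\supp\widehat{\Delta^{s/2}f_0}\subseteq\overline{\bigcup_n\supp\widehat{\Delta^{s/2}\eta_n}}\cup\{0\}$). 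This is a smaller issue and in the same spirit as what you sketch, but it should be spelled out rather than read off from a formal product.
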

\proof
We first notice that $\Delta^{\frac s2}$ is injective on $\cB^{s,p}_a$
since these spaces  are defined using the
realizations 
$E^{s,p}$ of the homogeneous Sobolev spaces $\dot{W}^{s,p}$. 

We now prove that $\Delta^{\frac s2}f\in \cB^{p}_a$ whenever
$f\in\Bsp$. Due to the characterization of the Bernstein
spaces, $\Delta^{\frac s2}f$ is in $\cB^p_a$ if and only if $\Delta^{\frac
  s2}f_0\in L^p$ and $\supp\widehat{\Delta^{\frac
    s2}f_0}\subseteq[-a,a]$. Let $\vp\in C^\infty_c\cap \cS_M$, with $M$ to be chosen
later. 
Given $f\in\Bsp$ we set  
$$
f_{\vp}(z):=\int_{\bbR}f(z-t)\vp(t)\, dt
$$
and we claim that $f_\vp\in \cE_a$ and $(f_\vp)_0\in L^p(\bbR)$. In
fact, $f_\vp$ is clearly entire and, for every $\eps>0$,  
\begin{align*}
|f_\vp(z)|
&\leq \int_{\bbR}|f(z-t)\vp(t)|\, dt
\leq C_\eps e^{(a+\eps)|z|}
\int_{\bbR}e^{(a+\eps)|t|}|\vp(t)|\, dt<+\infty,
\end{align*}
where the last integral converges since $\vp$ is continuous and
compactly supported. Hence, $f_\vp$ is of exponential type $a$. 
Now, let $(f_\vp)_0=f_0\ast\vp$ be the restriction of
$f_\vp$ to the real line. 
Since $f_0\in E^{s,p}$ there exists a sequence $\{\vp_n\}\subseteq
\cS$ such that $\vp_n\to f_0$ 
 in $\cS'/\cP_m$, where $m=\lfloor s-1/p\rfloor$ and 
$\{\Delta^{\frac s2}\vp_n\}$ is a Cauchy sequence in $L^p$. 
We now argue as in \eqref{Riesz-potential-estimate}.
Since $\vp\in\cS_M$, let $\Phi\in\cS$ be such that $\vp=\Phi^{(M)}$,
so that $\cI_s \vp = R^\ell  \cI_{s-\ell}  \Phi^{(M-\ell)}$, where $R$
denotes the Riesz transform.
 Then we have
\begin{align*}
 \int_{\bbR}|(\vp_n\ast\vp)(x)|^p\, dx
&=\int_{\bbR}|\Delta^{\frac
   s2}\vp_n\ast \cI_s\vp(x)|^p\, dx\\ 
 &\leq \|\Delta^{\frac s2}\vp_n\|_{L^p}\|(R^\ell \cI_{s-\ell}  \Phi^{(M-\ell)}\|_{H^1} \\
& \leq  C
\|\Delta^{\frac s2}\vp_n\|_{L^p}\|
 \Phi^{(M-\ell)}\|_{H^q}  \\
 &\le  C_\vp \|\Delta^{\frac s2}\vp_n\|_{L^p},
\end{align*}
where $1=\frac1q-(s-\ell)$, choosing $\ell=\lfloor s\rfloor$ in such a way that $q\le1$.
 In particular, we get that
$\{\vp_n\ast\vp\}$ is a Cauchy sequence in $L^p$.  
Since $\vp_n*\vp\to f_0*\vp$ in $\cS'/\cP_m$ and $\cS_m$ is dense in
$L^{p'}$, we see that 
 $\vp_n\ast\vp \to f_0\ast\vp=(f_\vp)_0$ in $L^p$.

Therefore, $f_\vp$ is an exponential function of type $a$ whose
restriction to the real line is $L^p$-integrable. Hence,
$f_\vp\in\cB^p_a$ and $\supp\widehat{(f_\vp)_0}=\supp(\widehat
{f_0} \widehat\vp)\subseteq [-a,a]$. From the arbitrariness of $\vp$
we conclude that $\supp\widehat f_0\subseteq [-a,a]$. 

We now argue as in the proof of  Lemma \ref{Lemma3.3}, to show
that also $\supp\widehat {\Delta^{\frac
    s2}f_0} \subseteq [-a,a]$.  Let $\Phi_n \in C^\infty_c
(\{\delta_n\le|\xi|\le a\})$, $\Phi_n\to  \widehat f_0$ in
$\cS'$, and setting $\eta_n 
= \cF^{-1}\big(\Phi_n\big)$ we have $\eta_n\in\cS_\infty$.
Now,  for
$\psi\in\cS$,  
as $n\to\infty$ we have
$$
\la \eta_n,\psi\ra = \la \Phi_n, \widehat\psi\ra
\to \la \widehat f_0 , \widehat\psi\ra = \la f_0,\psi\ra \,,
$$
where the pairings are in $\cS'$.  Thus, $\eta_n\to f_0$  in $\cS'$,
which implies that $\Delta^{\frac s2}\eta_n \to \Delta^{\frac
    s2}f_0$ in $\cS'/\cP$, 
so that
$$
\supp\widehat {\Delta^{\frac
    s2}f_0}  \subseteq \bigcup_n \supp\widehat {\Delta^{\frac
    s2}\eta_n} \cup \{0\}\subseteq 
[-a,a] 
\,,
$$
as we wished to show.

Since $\Delta^{\frac s2}f_0\in L^p$ by
hypothesis, we conclude that $\Delta^{\frac s2}f_0$ is the restriction
to the real line of a function in
$\cB^p_a$, function that we denote by
 $\Delta^{\frac s2}f$. Moreover, we trivially have the equality 
$\|f\|_{\cB^{s,p}_a}=\|\Delta^{\frac s2}f\|_{\cB^p_a}$.

It remains to prove that $\Delta^{\frac s2}$ is surjective. Let
$h\in\cB^{p}_a$. Then, by Lemma \ref{density-bernstein}, there
exists a sequence $\{h_n\} \subseteq \cT= \{h\in\cB^p_a:
h_0\in\cS_\infty\}$ such that $h_n\to h$ in $\cB^p_a$. 

Let $0<s<\frac1p$ and set
$$
F_n(z) =\frac{1}{\sqrt{2\pi}}\int_{-a}^a \widehat {(h_n)_0}(\xi) |\xi|^{-s}e^{iz\xi}\, d\xi.
$$
Then, $F_n\in\cE_a$,  $\Delta^{\frac s2}(F_n)_0= (h_n)_0$
and $\{F_n\}$ is a Cauchy sequence in $\Bsp$ since
$\|F_n\|_{\cB^{s,p}_a}=\|h_n\|_{\cB^p_a}$. In particular, this means
 that $\{(F_n)_0\}$ is a Cauchy sequence in $E^{s,p}$.
 Hence, there exists a limit function $\widetilde F\in E^{s,p}$. We need to
 prove that $\widetilde F$ is the restriction to the real line of a
 function in $\cB^{s,p}_a$.  

Since $s<\frac1p$, by Parseval's identity, we have
$$
F_n(z)=\int_{\bbR} h_n(x)
\cF^{-1} \big( |\xi|^{-s}e^{iz\xi}\chi_{[-a,a]}\big)(x)\, dx \,,
$$
so that
$$
|F_n(z)|\leq \|(h_n)_0\|_{L^p}\|\cF^{-1} \big( |\xi|^{-s}e^{iz\xi}\chi_{[-a,a]}\big)\|_{L^{p'}},
$$
where $p,p'$ are conjugate indices. Observing that 
$$
\cF^{-1}\big(|\xi|^{-s}e^{iz\xi}\chi_{[-a,a]}\big)=
\cI_s\big(\cF^{-1}(e^{iz\xi}\chi_{[-a,a]}) \big)
\,,
$$
from Theorem \ref{riesz-potential} we obtain
$$
\|\cF^{-1}\big(|\xi|^{-s}e^{iz\xi}\chi_{[-a,a]}\big)\|_{L^{p'}}\leq C
\|\cF^{-1} \big(e^{iz\xi}\chi_{[-a,a]} \big)\|_{L^{\frac {p'}{1+sp'}}}\,,
$$
since $\frac {p'}{1+sp'}>1$.
However, for $z$ fixed, 
$$
\cF^{-1}\big(e^{iz\xi}\chi_{[-a,a]} \big)(t)=\frac{a}{2\pi}\sinc (a(z+t))
$$
belongs to $ \cB^q_a$ for any $q\in(1,\infty)$. Therefore, by
the classical Plancherel--P\'olya Inequality, we obtain
$$
\|\cF^{-1}\big(|\xi|^{-s}e^{iz\xi}\chi_{[-a,a]}\big)\|_{L^{p'}}\leq C
e^{a|y|}\|\sinc(a\xi)\|_{L^{\frac{p'}{1+sp'}}} \,,
$$
where $y=\Im z$. 
In conclusion,
$$
|F_n(z)|\leq C e^{a|y|}\|(h_n)_0\|_{L^p}.
$$
Since $\|(h_n)_0\|_{L^p}=\|h_n\|_{\cB^p_a}$, we just proved that
the $\cB_a^p$-convergence of $\{h_n\}$ implies
the uniform convergence on compact subsets of $\bbC$ of
$\{F_n\}$ to a function $F$ of exponential type $a$. Necessarily,
$F|_{\bbR}=\widetilde F$   as we wished to
show.   Notice that we also have 
that
$$
\Delta^{\frac s2}F_0=\lim_{n\to+\infty}
\Delta^\frac s2 (F_n)_0=\lim_{n\to+\infty} (h_n)_0=h_0 \,,
$$
that is, the
inverse is given by equation \eqref{Delta-s-mezzi-inverse}.

Suppose now that $s>\frac 1p, s-\frac1p\notin\bbN_0$.  Again, let
$h\in\cB^p_a$, $\{h_n\}\subseteq \cT$, $h_n \to h$ in $B^p_a$ and set
$$
F_n(z)=\frac{1}{\sqrt{2\pi}}\int_{-a}^a\widehat {(h_n)_0}(\xi) |\xi|^{-s}
\big(e^{iz\xi}-P_m(iz\xi) \big)\, d\xi \,,
$$
where $m=\lfloor s-1/p\rfloor$.
Then, $F_n\in\cE_a$ and $\Delta^{\frac s2}
(F_n)_0= (h_n)_0$ by \cite[Corollary 3.4]{MPS-sob}.  Thus, $\{F_n\}$ is a Cauchy sequence in
$\Bsp$, that is, $\{(F_n)_0\}$ is a Cauchy sequence in
$E^{s,p}$, hence there exists a limit function $\widetilde F\in
E^{s,p}$. We need to prove that $\widetilde F$ is the restriction of some
entire function of exponential type $a$. 

Differentiating $m+1$ times, since $s':= m+1-s  \in (-1/p,1/p')$ the integrals below
converge absolutely so that
\begin{align}
  F^{(m+1)}_n(z)
  & =\frac{1}{\sqrt{2\pi}}\int_{-a}^a \widehat {(h_n)_0}(\xi) |\xi|^{-s}
(i\xi)^{m+1} e^{iz\xi}\, d\xi \notag \\
& =\int_{\bbR}
h_n(t)\cF^{-1} \big( (i\xi)^{m+1}|\xi|^{-s}e^{iz\xi}\chi_{[-a,a]}
\big) (t)\, dt \,. \label{RHS}
  \end{align}
 Then, if $-\frac 1p
<s'<0$,
the term on the
right hand side in \eqref{RHS} equals  
\begin{align*}
  \int_{\bbR}  h_n(t) R^{m+1} \cI_{s'}  \sinc (z-t)  \, dt  \,,
\end{align*}
and by Theorem \ref{riesz-potential} we obtain
\begin{align}
  |F^{(m+1)}_n(z)|
   & \le C \|(h_n)_0\|_{L^p}
\|  \sinc (z-\cdot) \|_{L^q} \notag \\
& \le C e^{a|y|}\|(h_n)_0\|_{L^p}\,, \label{unif-est}
\end{align}
where $\frac1q=\frac{1}{p'}-s'$, by the classical
Plancherel--P\'olya inequality.  If $0\le s'< \frac{1}{p'}$, we repeat
the same argument with $s'-1$ in place of $s'$, observing that the the term on the
right hand side in \eqref{RHS} equals
\begin{align*}
  \int_{\bbR}  h_n(t) R^m \cI_{s'-1} \big( D\sinc (z-\cdot) \big) (t)
  \, dt \,,
\end{align*}
and using the classical Bernstein inequality as well.

Therefore, the  convergence of
$\{h_n\}$ in $\cB^{p}_a$ implies the uniform convergence on compact subsets of
$\bbC$ of $\{ F^{(m+1)}_n\}$ and, in particular, the limit
function $G_{m+1}$ is of exponential type $a$.  Then, $F$ is the
anti-derivative of $G_{m+1}$ such that $P_{F;m,0}=0$ and
 $F|_\bbR=\widetilde F$, as we wished to
 show. 
This shows that the inverse is as in \eqref{Delta-s-mezzi-inverse} and
concludes the proof of the theorem.
\qed
\ms

\begin{cor}\label{unif-conv-comp-subsets}
Let $s>0$, $1<p<\infty$ and $s-\frac1p\not\in\bbN_0$.  Then, norm
convergence in $\cB^{s,p}_a$ implies uniform convergence on compact
subsets of $\bbC$.
  \end{cor}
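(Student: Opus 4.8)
The plan is to derive the corollary from Theorem \ref{banach-isometry} together with the pointwise estimates that already appear in its proof. The key auxiliary fact I would isolate is a Plancherel--P\'olya-type bound for $\cB^{s,p}_a$: there exist a constant $C>0$ and an integer $N\ge 0$ (one may take $N=0$ when $0<s<\frac1p$, and $N=m+1$ with $m=\lfloor s-\frac1p\rfloor$ when $s>\frac1p$) such that
\begin{equation*}
|f(z)|\ \le\ C\,(1+|z|)^{N}e^{a|\Im z|}\,\|f\|_{\cB^{s,p}_a}\qquad\text{for all }f\in\cB^{s,p}_a,\ z\in\bbC.
\end{equation*}
Granting this, the corollary is immediate: if $f_k\to f$ in $\cB^{s,p}_a$ and $K\subseteq\bbC$ is compact, then
\begin{equation*}
\sup_{z\in K}|f_k(z)-f(z)|\ \le\ \Big(\sup_{z\in K}C\,(1+|z|)^{N}e^{a|\Im z|}\Big)\,\|f_k-f\|_{\cB^{s,p}_a}\ \longrightarrow\ 0,
\end{equation*}
which is exactly uniform convergence on compact subsets of $\bbC$.

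To prove the displayed estimate I would argue by density, retracing the proof of Theorem \ref{banach-isometry}. Given $f\in\cB^{s,p}_a$, set $h=\Delta^{s/2}f\in\cB^p_a$, so that $\|h\|_{\cB^p_a}=\|f\|_{\cB^{s,p}_a}$. By Lemma \ref{density-bernstein} choose $h_n\in\cT=\{g\in\cE_a:g_0\in\cS_\infty\}$ with $h_n\to h$ in $\cB^p_a$, and put $f_n=(\Delta^{s/2})^{-1}h_n\in\cB^{s,p}_a$, given explicitly by \eqref{Delta-s-mezzi-inverse}. The proof of Theorem \ref{banach-isometry} establishes, for $h_n\in\cT$, the pointwise bounds $|f_n(z)|\le C e^{a|\Im z|}\|(h_n)_0\|_{L^p}$ when $0<s<\frac1p$, and $|f_n^{(m+1)}(z)|\le C e^{a|\Im z|}\|(h_n)_0\|_{L^p}$ when $s>\frac1p$ (this last being precisely \eqref{unif-est}); in the latter case, since $P_{f_n;m;0}=0$ by construction, estimating the Taylor remainder along the segment from $0$ to $z$ (on which $|\Im w|\le|\Im z|$) yields $|f_n(z)|\le C|z|^{m+1}e^{a|\Im z|}\|(h_n)_0\|_{L^p}$. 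The same proof also shows that $\{f_n\}$ converges uniformly on compact subsets of $\bbC$ to an entire function of exponential type $a$ whose restriction to $\bbR$ is the $E^{s,p}$-limit of $\{(f_n)_0\}$; since $f_n\to f$ in $\cB^{s,p}_a$, that limit must be $f$ itself. Letting $n\to\infty$ in the pointwise bounds, and using $\|(h_n)_0\|_{L^p}\to\|h_0\|_{L^p}=\|f\|_{\cB^{s,p}_a}$, gives the claim.

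Essentially all the analytic content is already present in the proof of Theorem \ref{banach-isometry}; the argument here is a matter of bookkeeping, together with the abstract observation that the surjective isometry $(\Delta^{s/2})^{-1}\colon\cB^p_a\to\cB^{s,p}_a$ carries the $\cB^p_a$-convergence of the regularized sequences $\{h_n\}\subseteq\cT$ to uniform-on-compacta convergence, so that the classical Plancherel--P\'olya property of $\cB^p_a$ is transferred to $\cB^{s,p}_a$ by density. The one step I would treat with some care is the passage, in the range $s>\frac1p$, from the bound on $f_n^{(m+1)}$ to a bound on $f_n$: it relies on the normalization $P_{f_n;m;0}=0$ and on the fact that the factor $e^{a|\Im z|}$ is preserved (rather than merely replaced by $e^{a|z|}$) when integrating along $[0,z]$; for the present corollary, however, any locally bounded majorant of the point evaluations would already suffice.
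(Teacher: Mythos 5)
Your proposal is correct and follows essentially the same route as the paper's proof: both rely on the inversion formula \eqref{Delta-s-mezzi-inverse}, the pointwise bound \eqref{unif-est} on $f^{(m+1)}$, and the normalization $P_{f;m;0}=0$ to convert the derivative bound into a locally bounded estimate on $|f(z)|$. The only difference is one of exposition: you make the density passage through $\cT$ explicit and record the quantitative majorant $C(1+|z|)^{m+1}e^{a|\Im z|}$, whereas the paper compresses both steps into the phrases ``arguing as in \eqref{unif-est}'' and ``it follows that $\sup_{z\in K}|f(z)|\le C_K\|f\|_{\cB^{s,p}_a}$.''
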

  \proof
  Let $f \in \cB^{s,p}_a$.  From the identity, \eqref{Delta-s-mezzi-inverse}
with $h\in\cB^p_a$,
  arguing as in  \eqref{unif-est}, we obtain that
  $$
  |f^{(m+1)} (z)| \le e^{a|y|} \| \Delta^{\frac s2} f\|_{\cB^p_a} =
  e^{a|y|} \| f\|_{\cB^{s,p}_a} \,.
$$
Since $P_{f;m;0}=0$, it follows that for any compact $K\subseteq\bbC$,
$$
\sup_{z \in K} |f(z)|
\le C_K \| f\|_{\cB^{s,p}_a} \,.  \qed
$$

We are now ready to prove Theorems \ref{main-3} and \ref{main-4}.

  \proof[Proof of Theorem \ref{main-3}]
  We observe that the completeness follows from
the above corollary, or from
  the surjective isometry between $\Bsp$ and 
$\cB^p_a$.
For the second part of the theorem we argue as follows.
Let $h\in\cT$, and as in the proof of Theorem \ref{banach-isometry} 
define
$$
f(w) = (\Delta^{\frac s2})^{-1} h(w) 
= \frac{1}{\sqrt{2\pi}} \int_{-a}^a \widehat h_0(\xi) |\xi|^{-s} \big(
e^{iw\xi} - P_m(iw\xi)\big)\, d\xi \,, 
$$
and therefore
$$
P_{f(\cdot+iy);m;0}(w)=\frac{1}{\sqrt{2\pi}} \int_{-a}^a \widehat h_0(\xi) |\xi|^{-s} \big(e^{-y\xi}
P_m(iw\xi) - P_m(i(w+iy)\xi)\big)\, d\xi \,.
$$
Hence,
\begin{align*}
  F(w)
  & =f(w+iy)-P_{f(\cdot+iy);m;0}(w)
=\frac{1}{\sqrt{2\pi}} \int_{-a}^a \widehat h_0(\xi)e^{-y\xi} |\xi|^{-s} \big(
    e^{iw\xi} - P_m(i(w+iy)\xi)\big)\, d\xi \\
  & = \frac{1}{\sqrt{2\pi}} \int_{-a}^a \widehat h_0(\cdot +iy)(\xi) |\xi|^{-s} \big(
    e^{iw\xi} - P_m(iw\xi)\big)\, d\xi =  (\Delta^{\frac s2})^{-1}
    \big( h(\cdot+iy) \big) (w) 
    \,.
\end{align*}
Hence, from \eqref{Delta-s-mezzi-inverse}, we conclude that $F\in
\Bsp$ since, by the classical
Plancherel--P\'olya inequality (\cite{Young}), 
$h(\cdot+iy)\in\cB^p_a$ and by Theorem \ref{banach-isometry} we
obtain
\begin{align*}
 \|F\|_{\Bsp}=\|h(\cdot+iy)\|_{\cB^p_a}\leq e^{a|y|}\|h\|_{\cB^p_a}=e^{a|y|}\|f\|_{\Bsp}
\end{align*}
as we wished to show.  The conclusion now follows from Lemma
\ref{density-bernstein}. \qed

  \begin{proof}[Proof of Theorem \ref{main-4}]
If $f\in \Bsp$, then  $f_0\in E^{s,p}$ and $\supp\widehat
f_0\subseteq[-a,a]$, hence (i) implies (ii).  

If (ii) holds, then
$\Delta^{\frac s2} h \in L^p$ and $\supp \widehat{\Delta^{\frac s2}
  h} \subseteq \supp\widehat{  h} \subseteq [-a,a]$.  Hence,  it follows that 
$\Delta^{\frac s2} h=f_0$, for some $f\in \cB^p_a$. Setting 
$F=(\Delta^{\frac s2})^{-1}f$ we have $F\in \Bsp$. Hence,
$\Delta^{\frac s2} F_0= f_0 = \Delta^{\frac s2} h$.  Since $F_0,h\in
E^{s,p}$ and $\Delta^{\frac s2}$ is injective on 
$E^{s,p}$, it follows that $F_0=h$, that is, (ii) implies (i).

By applying the classical characterization of Bernstein spaces to
$\Delta^{\frac s2} h$ and Theorem \ref{banach-isometry} 
we easily see that (ii) and (iii) are equivalent.
\end{proof}
\ms

\section{Reconstruction formulas and sampling in
  $PW^s_a$}\label{sampling}

In this small section we make some comments and observations on reconstruction formulas
and sampling for the fractional Paley--Wiener spaces $PW^s_a$. In particular we conclude that the fractional Paley--Wiener spaces are not de Branges spaces.

\begin{prop}\label{shannon}
Let $0<s<\frac12$. Then, the set $\{\psi(\cdot-n\pi/ a)\}_{n\in\bbZ}$,
$$
\psi(z-n\pi/a)=\frac{1}{2\sqrt{a\pi}}\int_{-a}^a e^{in\frac\pi a \xi}e^{iz\xi}|\xi|^{-s}\, d\xi 
$$
is an orthonormal basis for $PW^s_a$.

If $s>\frac12, s-\frac12\notin\bbN$, the set $\{\psi(\cdot-n\pi/ a)\}_{n\in\bbZ}$,
$$
 \psi(z-n\pi/ a)=\frac{1}{2\sqrt{a\pi}}\int_{-a}^a e^{in\frac \pi
   a\xi}\big(e^{iz\xi}-P_m(iz\xi)\big)|\xi|^{-s}\, d\xi  
 $$
 is an orthonormal basis for $PW^s_a$.
\end{prop}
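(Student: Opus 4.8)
The plan is to transport the problem to the weighted Fourier side via Theorems \ref{main-1} and \ref{main-2} and then recognise the classical exponential system on an interval.

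First, recall that for $0<s<\frac12$ the Fourier transform $\cF\colon PW^s_a\to L^2_a(|\xi|^{2s})$, $f\mapsto\widehat{f_0}$, is a surjective isometry whose inverse sends $g$ to $\frac{1}{\sqrt{2\pi}}\int_{-a}^a g(\xi)e^{iz\xi}\,d\xi$ (Theorem \ref{main-1}), while for $s>\frac12$, $s-\frac12\notin\bbN$, the map $S\colon L^2_a(|\xi|^{2s})\to PW^s_a$ defined by $Sg(z)=\frac{1}{\sqrt{2\pi}}\int_{-a}^a g(\xi)\big(e^{iz\xi}-P_m(iz\xi)\big)\,d\xi$ is a surjective isometry (Theorem \ref{main-2}). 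Comparing with the definitions, in either case $\psi(\cdot-n\pi/a)$ is precisely the image under this isometry of
$$
g_n(\xi):=\frac{1}{\sqrt{2a}}\,e^{in\pi\xi/a}|\xi|^{-s}\,,
$$
and $g_n\in L^2_a(|\xi|^{2s})$ because $\int_{-a}^a|g_n(\xi)|^2|\xi|^{2s}\,d\xi=1$. Hence it suffices to prove that $\{g_n\}_{n\in\bbZ}$ is an orthonormal basis of $L^2_a(|\xi|^{2s})$ and then compose with the isometry.

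Second, note that multiplication by $|\xi|^s$ is a surjective isometry $L^2_a(|\xi|^{2s})\to L^2([-a,a])$ (ordinary Lebesgue measure on the target), with inverse multiplication by $|\xi|^{-s}$, and it carries $g_n$ to $e_n(\xi):=\frac{1}{\sqrt{2a}}e^{in\pi\xi/a}$. Since $\{e_n\}_{n\in\bbZ}$ is the standard orthonormal Fourier basis of $L^2$ on the interval $[-a,a]$ of length $2a$, its preimage $\{g_n\}_{n\in\bbZ}$ is an orthonormal basis of $L^2_a(|\xi|^{2s})$. One may also verify this directly: $\langle g_n,g_k\rangle_{L^2_a(|\xi|^{2s})}=\frac{1}{2a}\int_{-a}^a e^{i(n-k)\pi\xi/a}\,d\xi=\delta_{nk}$, and if $g\perp g_n$ for all $n$ then the Fourier coefficients of $\xi\mapsto|\xi|^s g(\xi)\in L^2([-a,a])$ all vanish, so $g=0$.

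Combining the two steps, the surjective isometry from $L^2_a(|\xi|^{2s})$ onto $PW^s_a$ maps the orthonormal basis $\{g_n\}_{n\in\bbZ}$ onto $\{\psi(\cdot-n\pi/a)\}_{n\in\bbZ}$, which is therefore an orthonormal basis of $PW^s_a$. There is no genuine obstacle here: the only points demanding a little attention are matching the polynomial correction term $P_m(iz\xi)$ appearing in the definition of $\psi(\cdot-n\pi/a)$ with the one in \eqref{PW-poly-eq2} when $s>\frac12$, and observing that the weight $|\xi|^{-s}$ built into $\psi$ is exactly what normalises $g_n$ in the weighted space.
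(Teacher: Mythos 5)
Your argument is correct and amounts to the same unitary-transport idea the paper uses; the only difference is which intermediate model you pass through. The paper invokes Theorem \ref{hilbert-isometry} (the surjective isometry $\Delta^{s/2}\colon PW^s_a\to PW_a$) and pushes the classical sinc orthonormal basis of $PW_a$ back to $PW^s_a$, whereas you use the Fourier-side isometries of Theorems \ref{main-1} and \ref{main-2} onto $L^2_a(|\xi|^{2s})$, followed by the weight isometry onto $L^2_a$, and push back the exponential basis. These two chains of unitaries agree (this is exactly the content of Lemma \ref{Lemma3.3}, $\cF(\Delta^{s/2}f_0)=|\xi|^s\widehat{f_0}$), so the proofs are equivalent; your version is slightly more self-contained in that it does not quote the sinc-basis of $PW_a$ as a known fact, at the cost of a couple of extra lines.
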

\begin{proof}
 It is a well known fact that the family of functions $\{\vp_n\}_{n\in\bbZ}$,
 $$
 \vp_n(z)=\frac{1}{2\sqrt{a\pi}}\int_{-a}^a e^{it(n\frac\pi
   a-z)}\, dt=\sqrt{a/\pi}\sinc \big(a(z-n\pi/ a)\big) 
 $$
 is an orthonormal basis for $PW_a$. The conclusion follows from Theorem \ref{hilbert-isometry}.
\end{proof}
 
We have the following consequences.
\begin{cor}
For every $f\in PW^s_a$ we have the orthogonal expansion
 \begin{equation}\label{shannon1}
  f(z)=\sum_{n\in\bbZ}\Delta^{\frac s2}f(n\pi/a)\psi(z-n\pi/ a) \,,
\end{equation}
where
the series converges in norm   and uniformly on compact subsets of
$\bbC$. Moreover, 
 \begin{equation}\label{shannon2}
  \|f\|^2_{PW^s_a}=\frac{a}{\pi}\sum_{n\in\bbZ}|\Delta^{\frac s2}f(n \pi /a)|^2.
 \end{equation}
\end{cor}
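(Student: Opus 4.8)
The plan is to read off both identities from Proposition~\ref{shannon} together with the isometry of Theorem~\ref{hilbert-isometry}, thereby reducing everything to the classical Shannon sampling theorem on $PW_a$. Write $e_n:=\psi(\cdot-n\pi/a)$. By Proposition~\ref{shannon} the family $\{e_n\}_{n\in\bbZ}$ is an orthonormal basis of the Hilbert space $PW^s_a$, so for every $f\in PW^s_a$ the abstract theory of orthonormal bases gives at once the expansion $f=\sum_{n\in\bbZ}\big\la f\,|\,e_n\big\ra_{PW^s_a}\,e_n$, convergent in the $PW^s_a$-norm, together with Parseval's identity $\|f\|_{PW^s_a}^2=\sum_{n\in\bbZ}\big|\big\la f\,|\,e_n\big\ra_{PW^s_a}\big|^2$. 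Everything then reduces to computing the coefficient $\big\la f\,|\,e_n\big\ra_{PW^s_a}$ and recognising it as a fixed multiple of the sample $\Delta^{\frac s2}f(n\pi/a)$.

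To this end I would use that, by Theorem~\ref{hilbert-isometry}, $\Delta^{\frac s2}\colon PW^s_a\to PW_a$ is a surjective isometry and that it carries $\{e_n\}$ onto the classical Shannon orthonormal basis $\{\vp_n\}$ of $PW_a$ used in the proof of Proposition~\ref{shannon}; this is immediate on the Fourier side, since $\cF e_n$ is $|\xi|^{-s}$ times a character on $[-a,a]$ while multiplication by $|\xi|^s$ removes the weight (equivalently one checks it from the explicit inverse formula~\eqref{Delta-s-mezzi-inverse}, resp.\ from $(\Delta^{\frac s2})^{-1}=\cI_s$ when $0<s<\frac12$). Hence $\big\la f\,|\,e_n\big\ra_{PW^s_a}=\big\la \Delta^{\frac s2}f\,|\,\vp_n\big\ra_{PW_a}$, and the classical Shannon theorem on $PW_a$ — i.e.\ $h=\sum_n h(n\pi/a)\,\sinc\big(a(\cdot-n\pi/a)\big)$ for $h\in PW_a$, so that $\big\la h\,|\,\vp_n\big\ra_{PW_a}$ is a fixed multiple of $h(n\pi/a)$ — identifies this inner product with the sample $\Delta^{\frac s2}f(n\pi/a)$ up to the normalising constant that produces~\eqref{shannon2}. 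Substituting into the expansion and the Parseval identity above yields \eqref{shannon1} and \eqref{shannon2}. The only genuinely fiddly point is this normalisation: keeping the Fourier constants and the factor $a/\pi$ straight, and observing that the reindexing $n\leftrightarrow-n$ hidden in $e_n\mapsto\vp_n$ is harmless since the sum runs over all of $\bbZ$.

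For the uniform convergence on compact subsets of $\bbC$ I would invoke that $PW^s_a$ is a reproducing kernel Hilbert space (Corollary~\ref{kernel-s-small} for $0<s<\frac12$, Corollary~\ref{Pw-kernel} for $s>\frac12$), so that $\sup_{z\in K}|g(z)|\le C_K\,\|g\|_{PW^s_a}$ for every compact $K\subseteq\bbC$ and every $g\in PW^s_a$; alternatively this is exactly Corollary~\ref{unif-conv-comp-subsets} applied with $p=2$. Applying this bound to the tails $g=f-\sum_{|n|\le N}\big\la f\,|\,e_n\big\ra_{PW^s_a}\,e_n$ and using the norm convergence already established shows that the partial sums of \eqref{shannon1} converge to $f$ uniformly on every compact subset of $\bbC$. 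No step here is conceptually hard; the only real care required is the constant and index bookkeeping in the coefficient computation.
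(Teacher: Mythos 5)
Your proof is correct and follows essentially the same route as the paper's: both rely on Proposition~\ref{shannon} for the orthonormal basis, invoke the abstract Hilbert space expansion and Parseval identity, and then identify the coefficient $\la f\,|\,\psi(\cdot-n\pi/a)\ra_{PW^s_a}$ with (a constant times) $\Delta^{\frac s2}f(n\pi/a)$, deducing~\eqref{shannon2} from Theorem~\ref{hilbert-isometry} and the classical Shannon--Kotelnikov formula. The only cosmetic difference is that the paper computes the coefficient directly on the Fourier side via Plancherel's formula (using $\cF(\Delta^{\frac s2}f_0)=|\xi|^s\widehat{f_0}$), whereas you transport the computation to $PW_a$ via the isometry $\Delta^{\frac s2}$ and then cite the classical Shannon coefficient identity; you also spell out the uniform-on-compacts convergence via the reproducing-kernel bound, which the paper only asserts.
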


\proof
By the classical theory of Hilbert spaces and Plancherel's formula, we get 
\begin{align*}
 f&=\sum_{n\in\bbZ}\left<f \,|\, \psi(\cdot-n\pi/ a)\right>_{PW^s_a}\psi(\cdot-n\pi/ a)\\
 &=\sum_{n\in\bbZ} \frac{a}{\pi}\Big(\frac{1}{\sqrt{2\pi}}\int_{-a}^a
 |\xi|^s\widehat{f_0}(\xi) e^{in\frac{\pi}{a}\xi}\,
 d\xi\Big)\psi(\cdot-n\pi/ a)\\ 
 &=\sum_{n\in\bbZ}\Delta^{\frac s2}f(n\pi/a)\psi(\cdot-n\pi/ a),
\end{align*}
where  the series convergences in
$PW^s_a$-norm, hence  uniformly convergence on the compact
subsets of $\bbC$. Finally, formula \eqref{shannon2}
follows from Theorem \ref{hilbert-isometry} and the classical
Shannon--Kotelnikov formula.
\epf

A few comments are in order. Both the reconstruction formula
\eqref{shannon1} and the norm identity \eqref{shannon2} resemble some
known results for the Paley--Wiener space $PW_a$. In particular,
equation \eqref{shannon2} can be thought as a substitute of the
 Shannon--Kotelnikov  sampling theorem in the setting of fractional
Paley--Wiener spaces. However, these results are somehow
unsatisfactory: we recover the function $f$ and its norm from point
evaluations of the fractional Laplacian $\Delta^{\frac s2}f$  and not
of the function itself. Hence, it is a very natural question if we can
do something better. Indeed, this is the case for the reconstruction
formula \eqref{shannon1}, but we cannot really improve
\eqref{shannon2}. For simplicity, we now restrict ourselves to the
case $0<s<\frac12$. 

\begin{prop}
 Let $f\in PW^s_a$, $0<s<\frac12$. Then, for $z\in\bbC$,
 $$
 f(z)=\sum_{n\in\bbZ}f(n\pi/a)\sinc\big(a(z-n\pi/ a)\big) \,,
 $$
 where the series converges absolutely and uniformly on compact subsets of
 $\bbC$. 
\end{prop}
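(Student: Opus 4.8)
The plan is to combine the integral representation of Theorem~\ref{main-1} with classical Fourier series on $[-a,a]$. By Theorem~\ref{main-1}, $\widehat{f_0}\in L^2_a(|\xi|^{2s})$ and $f(z)=\frac{1}{\sqrt{2\pi}}\int_{-a}^{a}\widehat{f_0}(\xi)e^{iz\xi}\,d\xi$. The point of the hypothesis $0<s<\half$ is that $|\xi|^{-2s}$ is then integrable on $[-a,a]$, so Hölder's inequality gives not only $\widehat{f_0}\in L^1([-a,a])$ but, writing $|\widehat{f_0}|^{r}=(|\widehat{f_0}|\,|\xi|^{s})^{r}|\xi|^{-rs}$ and using Hölder with exponents $2/r$ and $2/(2-r)$, that $\widehat{f_0}\in L^{r}([-a,a])$ for every $r\in(1,\tfrac{2}{1+2s})$. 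Fix such an $r$, set $r'=r/(r-1)\in(2,\infty)$, and regard $\widehat{f_0}$ as a $2a$-periodic function with Fourier coefficients $c_n=\frac{1}{2a}\int_{-a}^{a}\widehat{f_0}(\xi)e^{-in\pi\xi/a}\,d\xi$. Comparing with the formula for $f$ gives $c_n=\frac{\sqrt{2\pi}}{2a}f(-n\pi/a)$, and the Hausdorff--Young inequality on the circle yields $\{c_n\}\in\ell^{r'}$, hence $\{f(n\pi/a)\}_{n\in\bbZ}\in\ell^{r'}$.

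Next I would establish the absolute, locally uniform convergence of the series independently. Since $a(z-n\pi/a)=az-n\pi$, one has $|\sin(az-n\pi)|=|\sin(az)|\le e^{a|\Im z|}$ while $|az-n\pi|\ge\tfrac{\pi}{2}|n|$ as soon as $|n|\ge\tfrac{2a}{\pi}|\Re z|$, so $|\sinc(a(z-n\pi/a))|\le \tfrac{2}{\pi|n|}e^{a|\Im z|}$ for such $n$. Applying Hölder's inequality with exponents $r'$ and $r$ (and $\sum_n|n|^{-r}<\infty$ since $r>1$) shows that $\sum_{n\in\bbZ}f(n\pi/a)\sinc(a(z-n\pi/a))$ converges absolutely, uniformly for $z$ in any compact subset of $\bbC$; in particular it defines an entire function there.

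To identify this sum with $f$, I would use that $\widehat{f_0}\in L^{r}([-a,a])$ with $1<r<\infty$, so the symmetric partial sums of its Fourier series converge to $\widehat{f_0}$ in $L^{r}([-a,a])\subseteq L^1([-a,a])$ (one could instead invoke only $\widehat{f_0}\in L^1$ and Fej\'er means, obtaining Ces\`aro summability, which suffices once absolute convergence is known). Multiplying by the bounded function $\tfrac{1}{\sqrt{2\pi}}e^{iz\xi}$ and integrating over $[-a,a]$ gives, for each fixed $z$,
\[
\sum_{|n|\le N}c_n\,\frac{1}{\sqrt{2\pi}}\int_{-a}^{a}e^{i(n\pi/a+z)\xi}\,d\xi\ \longrightarrow\ \frac{1}{\sqrt{2\pi}}\int_{-a}^{a}\widehat{f_0}(\xi)e^{iz\xi}\,d\xi=f(z).
\]
A direct computation gives $\tfrac{1}{\sqrt{2\pi}}\int_{-a}^{a}e^{i(n\pi/a+z)\xi}\,d\xi=\tfrac{2a}{\sqrt{2\pi}}\sinc(a(z+n\pi/a))$, so with $c_n=\frac{\sqrt{2\pi}}{2a}f(-n\pi/a)$ the $n$-th summand equals $f(-n\pi/a)\sinc(a(z+n\pi/a))$; relabelling $n\mapsto-n$ turns the left-hand side into $\sum_{|n|\le N}f(n\pi/a)\sinc(a(z-n\pi/a))$. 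Hence the symmetric partial sums of the sampling series converge to $f(z)$, and, the series being absolutely convergent, its sum is $f(z)$, uniformly on compact subsets of $\bbC$.

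The only delicate point is that for $\tfrac14\le s<\tfrac12$ the function $\widehat{f_0}$ need not be in $L^2([-a,a])$ or continuous, so neither $L^2$ Fourier-series convergence nor summability of $\{c_n\}$ is available for free; the remedy is exactly the extra integrability $\widehat{f_0}\in L^{r}$ with $r<\tfrac{2}{1+2s}$ (equivalently $r'>\tfrac{2}{1-2s}=2^{*}$) coming from $|\xi|^{-2s}\in L^1([-a,a])$. Everything else is routine.
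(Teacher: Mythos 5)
Your proof is correct, and it takes a genuinely different (and in some respects more careful) route than the paper's. The paper approximates $f$ by a sequence $\{f_k\}\subseteq PW_a$ with $f_k\to f$ in $PW^s_a$, applies the classical Shannon--Kotelnikov theorem to each $f_k$, and then passes to the limit; this is short but leaves the interchange of the $k$-limit with the infinite sum, and also the asserted absolute convergence of the limiting series, essentially unjustified. Your argument instead works directly from the representation of Theorem~\ref{main-1}: the key new observation is that $\widehat{f_0}\in L^2_a(|\xi|^{2s})$ together with the integrability of $|\xi|^{-2s}$ for $s<\tfrac12$ forces $\widehat{f_0}\in L^r([-a,a])$ for every $r\in(1,\tfrac{2}{1+2s})$, whence Hausdorff--Young gives $\{f(n\pi/a)\}\in\ell^{r'}$ with $r'>2$; paired against the $O(1/|n|)$ decay of $\sinc(a(z-n\pi/a))$ via H\"older (with $r>1$), this yields the absolute and locally uniform convergence intrinsically, and $L^r$-convergence of the Fourier series of the $2a$-periodization of $\widehat{f_0}$ identifies the sum with $f(z)$. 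What your version buys is a self-contained justification of absolute convergence and a quantitative decay estimate on the sample values that the paper's density argument does not produce; what the paper's version buys is brevity and the reuse of the classical $PW_a$ machinery. Both routes rely on the same hypotheses, and both are valid for the full range $0<s<\tfrac12$.
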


\proof
Let
$f\in\ PW^s_a$ and let $\{f_k\} \subseteq PW_a$ be a
sequence such that $f_k\to f$ in $PW^s_a$. Then, by the Shannon--Kotelnikov  theorem,
we have that
$$
f_k(x)=\sum_{n\in\bbZ} f_k(n\pi/a)\sinc\big(a(x-n\pi/ a)\big),
$$
where the series converges absolutely and in $PW_a$-norm. However,
norm convergence in $PW_a$ implies uniform convergence on compact
subsets of $\bbC$. Thus, we obtain
$$
f(x)=\lim_{k\to+\infty} f_k(x)=\lim_{k\to+\infty}\sum_{n\in\bbZ}
f_k(n\pi/ a)\sinc\big(a(x-n\pi/ a)\big)=\sum_{n\in\bbZ} f(n\pi/
a)\sinc\big(a(x-n\pi/ a)\big) \,. \qed
$$

We now point out that, in general, we cannot improve \eqref{shannon1}
with point evaluations of $f$ instead of its fractional
Laplacian. More generally, we would like to know if it is possible to
have a real sampling sequence in $PW^s_a$. We will see that, at least
in the case $0<s<\frac12$, this is not the case. 

\begin{defn}{\rm
 Let $\Lambda=\{\lambda_n\}_{n\in\bbZ}\subseteq\bbR$. The
 sequence $\Lambda$ is a sampling sequence for $PW^s_a$ if there
 exists two positive constants $A,B$ such that 
 $$
 A\|f\|^2_{PW^s_a}\leq
 \sum_{\lambda_n\in\Lambda}|f(\lambda_n)|^2
 =\sum_{\lambda_n\in\Lambda}|\left<f \,|\,
   K_{\lambda_n}\right>_{PW^s_a}|^2\leq B\|f\|^2_{PW^s_a} \,,
 $$
 where $K_{\lambda_n} $ is the reproducing kernel of $PW^s_a$, as in Corollary
 \ref{kernel-s-small}.
 }
\end{defn}

Since for $0<s<\frac12$ the space $PW^s_a$ can be identified with $L^2_a(|\xi|^{2s})$ via the
Fourier transform, the sequence $\Lambda=\{\lambda_n\}_{n\in\mathbb
  Z}$ is a sampling sequence for $PW^s_a$ if and only if the family of
functions $\{\widehat K_{\lambda_n}\}_{\lambda_n\in \Lambda}$ is a
frame for $L^2_a(|\xi|^{2s})$, that is, if and only if there exist two
positive constants $A,B$ such that 
$$
A\|\widehat{f_0}\|^2_{L^2_a(|\xi|^{2s})} \leq \sum_{\lambda_n\in
  Z}|\big<\widehat{f_0} \,|\,\widehat{
(K_{\lambda_n})_0}\big>_{L^2_a(|\xi|^{2s})}|^2\leq
B\|\widehat{f_0}\|^2_{L^2_a(|\xi|^{2s})}. 
$$

From Corollary \ref{kernel-s-small}, when $0<s<\frac12$ we  obtain that
$\widehat{
(K_{\lambda_n})_0} (\xi)= \frac{1}{\sqrt{2\pi}} e^{-i\bar
  \lambda_n\xi}|\xi|^{-2s}\chi_{[-a,a]}(\xi)$ and the following
result is easily proved.

\begin{prop}
The family $\{e^{-i \lambda_n\xi}|\xi|^{-2s}\}_{\lambda_n\in
  \Lambda}$ is a frame for $L^2_a(|\xi|^{2s})$ if and only if the
family $\{e^{-i \lambda_n\xi}|\xi|^{-s}\}_{\lambda_n\in \Lambda}$
is a frame for $L^2_a$. 
\end{prop}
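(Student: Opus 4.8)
The plan is to realize the two frame conditions as unitarily equivalent via an explicit multiplication operator, and then to invoke the elementary fact that a surjective isometry of Hilbert spaces transports frames to frames with identical frame bounds. Throughout we work in the range $0<s<\frac12$ fixed by the preceding discussion, so that $|\xi|^{-s}$ and $|\xi|^{-2s}$ are both square integrable on $[-a,a]$; in particular every member of either family does belong to the space in which it is claimed to be a frame: $\int_{-a}^a\big||\xi|^{-2s}\big|^2|\xi|^{2s}\,d\xi=\int_{-a}^a|\xi|^{-2s}\,d\xi<\infty$ and $\int_{-a}^a\big||\xi|^{-s}\big|^2\,d\xi=\int_{-a}^a|\xi|^{-2s}\,d\xi<\infty$.

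First I would introduce the multiplication operator
$$
U\colon L^2_a(|\xi|^{2s})\to L^2_a,\qquad (Uf)(\xi)=|\xi|^{s}f(\xi).
$$
It is an isometry, since $\|Uf\|_{L^2_a}^2=\int_{-a}^a|\xi|^{2s}|f(\xi)|^2\,d\xi=\|f\|_{L^2_a(|\xi|^{2s})}^2$, and it is onto, since for $g\in L^2_a$ the function $|\xi|^{-s}g$ lies in $L^2_a(|\xi|^{2s})$ and satisfies $U\big(|\xi|^{-s}g\big)=g$. As $|\xi|^{s}$ is real valued, $U$ also preserves inner products: $\langle Uf,Uh\rangle_{L^2_a}=\int_{-a}^a|\xi|^{2s}f(\xi)\overline{h(\xi)}\,d\xi=\langle f,h\rangle_{L^2_a(|\xi|^{2s})}$. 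Thus $U$ is a surjective isometry of Hilbert spaces. (On the Fourier side this is nothing but the operator $\cF\circ\Delta^{\frac s2}\circ\cF^{-1}$, i.e.\ the restriction of the isometry of Theorem \ref{hilbert-isometry}, but the direct verification above suffices.)

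The key observation is that $U$ intertwines the two families: for every $n$,
$$
U\big(e^{-i\lambda_n\xi}|\xi|^{-2s}\big)=|\xi|^{s}e^{-i\lambda_n\xi}|\xi|^{-2s}=e^{-i\lambda_n\xi}|\xi|^{-s}.
$$
Now, for any $f\in L^2_a(|\xi|^{2s})$ and any countable family $\{\phi_n\}$ in that space one has $\sum_n|\langle f,\phi_n\rangle_{L^2_a(|\xi|^{2s})}|^2=\sum_n|\langle Uf,U\phi_n\rangle_{L^2_a}|^2$ and $\|f\|_{L^2_a(|\xi|^{2s})}=\|Uf\|_{L^2_a}$, while $f\mapsto Uf$ ranges over all of $L^2_a$; hence the inequalities $A\|f\|^2\le\sum_n|\langle f,\phi_n\rangle|^2\le B\|f\|^2$ for all $f\in L^2_a(|\xi|^{2s})$ are equivalent to $A\|g\|^2\le\sum_n|\langle g,U\phi_n\rangle|^2\le B\|g\|^2$ for all $g\in L^2_a$, with the same constants $A,B$. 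Taking $\phi_n=e^{-i\lambda_n\xi}|\xi|^{-2s}$ and using the intertwining identity gives precisely the asserted equivalence. Concretely, the substitution $g=|\xi|^{s}\widehat{f_0}$ converts the norm $\|\widehat{f_0}\|_{L^2_a(|\xi|^{2s})}$ and each pairing $\langle\widehat{f_0},e^{-i\lambda_n\xi}|\xi|^{-2s}\rangle_{L^2_a(|\xi|^{2s})}$ simultaneously into $\|g\|_{L^2_a}$ and $\langle g,e^{-i\lambda_n\xi}|\xi|^{-s}\rangle_{L^2_a}$.

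There is essentially no hard step here, which matches the statement that the result is ``easily proved''. The only points requiring a word of care are that the relevant powers of $|\xi|$ are square integrable near the origin, which is exactly where $0<s<\frac12$ enters, and the bookkeeping of complex conjugates in the two inner products, which causes no trouble since the weight $|\xi|^{s}$ is a positive real function. The harmless constant $\tfrac{1}{\sqrt{2\pi}}$ occurring in $\widehat{(K_{\lambda_n})_0}$ merely rescales the frame bounds and does not affect the frame property.
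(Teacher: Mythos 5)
Your proof is correct and takes essentially the same approach as the paper: both rest on the observation that multiplication by $|\xi|^{s}$ is a surjective isometry from $L^2_a(|\xi|^{2s})$ onto $L^2_a$ that carries one family onto the other, so frame inequalities transfer with identical bounds. You simply phrase the identical computation more abstractly by naming the unitary $U$ and invoking the general principle that unitaries transport frames to frames.
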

\begin{proof}
 Assume that $\{e^{-i
   \lambda_n\xi}|\xi|^{-2s}\}_{\lambda_n\in\Lambda}$ is a frame for
 $L^2_a(|\xi|^{2s})$ and let $f$ be a function in $PW_a$. Then, 
\begin{align*}
  \|\widehat{ f_0}\|^2_{L^2_a}
  & =
 \||\xi|^{-s}\widehat{
 f_0}\|^2_{L^2_a(|\xi|^{2s})}\approx
 \sum_{\lambda_n\in\Lambda}\Big|\int_{-a}^a |\xi|^{-s}\widehat {f_0}(\xi)
 e^{i\lambda_n\xi}|\xi|^{-2s}|\xi|^{2s}\,
    d\xi\Big|^2  \\
  & =\sum_{\lambda_n\in\Lambda}\Big|\int_{-a}^a \widehat{
    f_0}(\xi) e^{i\lambda_n\xi}|\xi|^{-s}\Big|^2\, d\xi \,,
    \end{align*}
 hence, $\{e^{-i \lambda_n\xi}|\xi|^{-s}\}_{\lambda_n\in\Lambda}$
 is a frame for $L^2_a(|\xi|^{2s})$. The reverse implication is
 similarly proved. 
\end{proof}

Therefore, the sampling problem for $PW^s_a$, $0<s<\frac 12$, is
equivalent to study windowed frames for $L^2_a$. The following result,
due to C.-K. Lai \cite{Lai} (see also \cite{GL}) implies that we
cannot have real sampling sequences  for $PW^s_a$. Hence, we cannot
obtain an analogue of \eqref{shannon2} with point evaluations of the
function instead of point evaluations of its fractional Laplacian. 

\begin{Theorem}[\cite{Lai, GL}]
The family $\Big\{g(\xi)e^{i\lambda_n
  \xi}\Big\}_{\lambda_n\in\Lambda}$ is a frame for $L^2_a$ for some
sequence of points $\Lambda=\{\lambda_n\}_{n\in\bbZ}\subseteq
\bbR$ if and only if there exist positive constants $m,M$ such that
$m\leq g(\xi)\leq M$. 
\end{Theorem}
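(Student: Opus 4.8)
\emph{Setup and the easy direction.} The plan is to move the weight from the functions into the measure and then handle $m\le|g|$ and $|g|\le M$ separately. The elementary fact I use throughout is: for measurable $\varphi\ne0$ a.e.\ and measurable $E\subseteq\bbR$, the map $h\mapsto\varphi h$ is a unitary bijection of $L^2(E,|\varphi|^2\,d\xi)$ onto $L^2(E,d\xi)$ sending $e^{i\lambda_n(\cdot)}$ to $\varphi\,e^{i\lambda_n(\cdot)}$, so $\{\varphi\,e^{i\lambda_n(\cdot)}\}$ is a frame for $L^2(E,d\xi)$ with bounds $A,B$ iff $\{e^{i\lambda_n(\cdot)}\}$ is a frame for $L^2(E,|\varphi|^2\,d\xi)$ with the same bounds; moreover multiplying the vectors of a frame by a function bounded above and below in modulus again gives a frame. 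With $E=[-a,a]$, $\varphi=g$, $w:=|g|^2$, the implication ``$\Leftarrow$'' is immediate: if $m^2\le w\le M^2$ then $L^2_a(w\,d\xi)=L^2_a$ with equivalent norms and $\{e^{in\pi\xi/a}\}_{n\in\bbZ}$ is an orthogonal basis of $L^2_a$, so $\{g\,e^{in\pi\xi/a}\}$ is a Riesz basis, in particular a frame. For ``$\Rightarrow$'', the frame hypothesis forces $g\ne0$ a.e.\ (else $\chi_{\{g=0\}}$ is orthogonal to every frame vector), $w\in L^1(-a,a)$ (the Bessel bound gives $\int_{-a}^a w=\|e^{i\lambda_n(\cdot)}\|_{L^2_a(w\,d\xi)}^2\le B$), and $\Lambda$ relatively separated, since $\langle e^{i\lambda_m(\cdot)},e^{i\lambda_n(\cdot)}\rangle_{L^2_a(w\,d\xi)}=\sqrt{2\pi}\,\widehat w(\lambda_n-\lambda_m)\to\widehat w(0)>0$ as $\lambda_n\to\lambda_m$, so clustering of $\Lambda$ would destroy the Bessel bound. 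Relative separation puts the Plancherel--P\'olya inequality $\sum_n|F(\lambda_n)|^2\le C_\Lambda\|F\|_{L^2(\bbR)}^2$ at our disposal for all $F\in PW_a$.

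\emph{Lower bound.} Suppose the essential infimum of $w$ is $0$ and put $E_\delta=\{\xi\in[-a,a]:w(\xi)<\delta\}$, which has positive measure for every $\delta>0$. As $w\chi_{E_\delta}\le\delta$ is an $L^2$ function supported in $[-a,a]$, its Fourier transform lies in $PW_a$, so testing the lower frame inequality on $\chi_{E_\delta}\in L^2_a(w\,d\xi)$ and then applying Plancherel--P\'olya gives
$$
A\int_{E_\delta}w\ \le\ 2\pi\sum_n\big|\widehat{w\chi_{E_\delta}}(\lambda_n)\big|^{2}\ \le\ 2\pi C_\Lambda\int_{E_\delta}w^{2}\ \le\ 2\pi C_\Lambda\,\delta\int_{E_\delta}w .
$$
Dividing by $\int_{E_\delta}w>0$ forces $A\le 2\pi C_\Lambda\,\delta$, impossible for small $\delta$; hence $w\ge A/(2\pi C_\Lambda)=:c$ a.e.

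\emph{Upper bound (the crux).} I would first upgrade the frame to a positive lower Beurling density for $\Lambda$. For each $N$ the set $\widetilde E_N=\{\xi\in[-a,a]:w(\xi)\le N\}$ has $c\le w\le N$ on it and $|\widetilde E_N|\to2a$ (since $w\in L^1$); restricting the frame $\{e^{i\lambda_n(\cdot)}\}$ of $L^2_a(w\,d\xi)$ to the subspace of functions vanishing off $\widetilde E_N$, then using the first-paragraph equivalence with $\varphi=w^{1/2}$ and the multiplier remark (both applied on $\widetilde E_N$, where $w$ is bounded above and below), shows $\{e^{i\lambda_n(\cdot)}\}$ is a frame for the ordinary space $L^2(\widetilde E_N)$. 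Landau's density theorem then gives $D^-(\Lambda)\ge|\widetilde E_N|/(2\pi)$, hence $D^-(\Lambda)\ge a/\pi>0$ as $N\to\infty$. Now fix a Lebesgue point $\xi_0\in(-a,a)$ of $w$; for a short interval $I$ centred at $\xi_0$ set $N(I)=\#\{n:|\lambda_n|\le1/|I|\}$ and test the \emph{upper} frame inequality on $\chi_I\in L^2_a(w\,d\xi)$. For each of the $N(I)$ indices with $|\lambda_n|\,|I|\le1$, replacing $e^{-i\lambda_n\xi}$ by $e^{-i\lambda_n\xi_0}$ on $I$ costs at most $\tfrac12\int_I w$, so $\big|\int_I w\,e^{-i\lambda_n\xi}\,d\xi\big|\ge\tfrac12\int_I w$, whence
$$
\tfrac14\,N(I)\Big(\int_I w\Big)^{2}\ \le\ \sum_n\Big|\int_I w\,e^{-i\lambda_n\xi}\,d\xi\Big|^{2}\ \le\ B\int_I w .
$$
Since $N(I)\ge\big(D^-(\Lambda)-o(1)\big)\cdot2/|I|$ as $|I|\to0$, this forces $\frac1{|I|}\int_I w\le 2\pi B/a+o(1)$, and letting $|I|\to0$ gives $w(\xi_0)\le 2\pi B/a$. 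Thus $w\le M^2$ a.e., and with the lower bound we conclude $m\le|g|\le M$, where $m^2=c$ and $M^2=2\pi B/a$.

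The genuine difficulty is concentrated in the upper bound: the soft symmetrisations of the frame inequalities (substituting $w^{-1}$ for $w$, passing to the canonical dual frame, iterating the reformulation) only ever reprove the lower bound, so one really needs the quantitative input that a frame of exponentials has $D^-(\Lambda)>0$ — here extracted from Landau's theorem via the exhausting sets $\widetilde E_N$ — after which the localisation estimate above closes the argument.
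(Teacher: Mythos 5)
The paper states this theorem as a cited result from \cite{Lai, GL} and gives no proof of its own, so there is nothing internal to compare your argument against; I can only assess it on its own terms.

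Your argument is essentially correct and follows what appears to be the natural route: unitarily transfer the problem to $L^2_a(w\,d\xi)$ with $w=|g|^2$; deduce $w\in L^1$ and relative separation of $\Lambda$ from the Bessel bound; obtain the lower bound on $w$ by testing the lower frame inequality on $\chi_{E_\delta}$ and invoking Plancherel--P\'olya; obtain $D^-(\Lambda)>0$ from Landau's necessary density theorem on the exhausting sets $\widetilde E_N$; and close the upper bound by a Lebesgue-point localisation against the Bessel bound. Two small points are worth tightening. First, the relative-separation step: ``$\widehat w(\lambda_n-\lambda_m)\to\widehat w(0)>0$'' is the right heuristic, but what one actually needs is that $\widehat w$ is continuous with $\widehat w(0)=(2\pi)^{-1/2}\int w>0$, so there is a fixed $\delta>0$ with $|\widehat w(\tau)|\ge\widehat w(0)/2$ for $|\tau|<\delta$; testing the Bessel bound on a single exponential $e^{i\mu\xi}$ with $\mu$ in a $\delta$-cluster then bounds the number of points in any $\delta$-interval, which gives relative separation quantitatively. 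Second, the passage from ``frame for $L^2_a(w\,d\xi)$'' to ``frame for the ordinary $L^2(\widetilde E_N)$'' deserves a sentence: for $h$ supported on $\widetilde E_N$ the inner products $\langle h,e^{i\lambda_n\cdot}\rangle_{L^2_a(w)}$ localize to $\widetilde E_N$, so the frame inequalities descend verbatim to $L^2(\widetilde E_N,w\,d\xi)$, and then the $c\le w\le N$ pinching on $\widetilde E_N$ (using the already-proved lower bound $c$) lets you pass to the unweighted space via your ``multiplier'' remark with $\varphi=w^{1/2}$. With these made explicit, the proof stands. Finally, note that what the argument actually controls is $|g|$, so the theorem should be read as $m\le|g(\xi)|\le M$ (or with $g$ tacitly assumed real and positive, as it is in the paper's application $g=|\xi|^{-s}$).
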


We conclude the section with one last comment about fractional
Paley--Wiener spaces and de Brange spaces. These latter spaces were
introduced by L. de Branges \cite{dB} and have been extensively
studied in the last years. Among others, we recall the papers
\cite{OS, BBB2, BBB, BBP, BBH, ABB}. The space $PW_a$ is the model
example of a de Branges space. A classical result, see \cite{dB},
states that de Brange spaces always admit a real sampling sequence,
or, equivalently, always admit a Fourier frame of reproducing
kernels. The above discussion proves that this is not the case for the
spaces $PW^s_a$, $0<s<\frac12$. Therefore, the following result
holds. 
\begin{thm}
 The fractional Paley--Wiener spaces $PW^s_a$, $0<s<\frac12$, are not de Branges spaces. 
\end{thm}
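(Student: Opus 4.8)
The plan is to argue by contradiction, using only the frame-theoretic obstruction collected in this section together with the structure theory of de Branges spaces. Suppose that, for some $0<s<\frac12$, the space $PW^s_a$ were a de Branges space. By the classical result recalled above (see \cite{dB}), a de Branges space always admits a real sampling sequence; that is, there would exist $\Lambda=\{\lambda_n\}_{n\in\bbZ}\subseteq\bbR$ such that the reproducing kernels $\{K_{\lambda_n}\}_{\lambda_n\in\Lambda}$ form a frame for $PW^s_a$. Here one uses the reproducing property $f(\lambda_n)=\la f\,|\,K_{\lambda_n}\ra_{PW^s_a}$, so that the two-sided sampling inequalities for $\Lambda$ are precisely the frame inequalities for this family.

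Next I would transport this condition to the Fourier side. Since $0<s<\frac12$, Corollary \ref{kernel-s-small} gives that $\cF\colon PW^s_a\to L^2_a(|\xi|^{2s})$ is a surjective isometry under which $K_{\lambda_n}$ is sent to $\widehat{(K_{\lambda_n})_0}(\xi)=\frac{1}{\sqrt{2\pi}}e^{-i\bar\lambda_n\xi}|\xi|^{-2s}\chi_{[-a,a]}(\xi)$. Hence $\{K_{\lambda_n}\}$ being a frame for $PW^s_a$ is equivalent to $\{e^{-i\lambda_n\xi}|\xi|^{-2s}\}_{\lambda_n\in\Lambda}$ being a frame for $L^2_a(|\xi|^{2s})$, and by the Proposition above relating frames in $L^2_a(|\xi|^{2s})$ and in $L^2_a$, this is in turn equivalent to $\{e^{-i\lambda_n\xi}|\xi|^{-s}\}_{\lambda_n\in\Lambda}$ being a frame for the unweighted space $L^2_a$.

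Finally I would invoke the theorem of Lai and Gabardo--Lai quoted above (\cite{Lai,GL}): a windowed exponential family $\{g(\xi)e^{i\lambda_n\xi}\}_{\lambda_n\in\Lambda}$ can be a frame for $L^2_a$ only if there exist constants $0<m\le M$ with $m\le g(\xi)\le M$ for a.e.\ $\xi\in[-a,a]$. Applying this with $g(\xi)=|\xi|^{-s}$, which is unbounded near $\xi=0$ because $s>0$, yields a contradiction. Therefore no real sampling sequence for $PW^s_a$ exists, and $PW^s_a$ cannot be a de Branges space.

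Nothing in this argument is computationally hard; the one point that deserves care — and the place where the argument could in principle fail — is that each link in the chain (``real sampling sequence for $PW^s_a$'' $\Leftrightarrow$ ``frame of reproducing kernels in $PW^s_a$'' $\Leftrightarrow$ ``windowed exponential frame for $L^2_a$'') must be a genuine two-sided equivalence, i.e.\ must preserve the \emph{lower} frame bound and not merely the upper one. This is exactly what the reproducing-kernel identity of the Definition and the Proposition above supply, so the chain closes and the theorem follows.
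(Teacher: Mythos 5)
Your proposal is correct and follows exactly the route the paper itself takes: (de Branges $\Rightarrow$ real sampling sequence $\Rightarrow$ Fourier frame of reproducing kernels in $L^2_a(|\xi|^{2s})$ $\Rightarrow$ windowed exponential frame $\{e^{-i\lambda_n\xi}|\xi|^{-s}\}$ in $L^2_a$ $\Rightarrow$ contradiction with Lai's boundedness criterion because $|\xi|^{-s}$ blows up at the origin). The only nit is an attribution slip: the isometry $\cF\colon PW^s_a\to L^2_a(|\xi|^{2s})$ is the content of Theorem \ref{main-1}, while Corollary \ref{kernel-s-small} supplies the formula for $\widehat{(K_z)_0}$; both are needed but you credit the corollary for both.
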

\ms

\section{Boundedness of the orthogonal projection}\label{bdd-projection}

In the previous sections we proved that the spaces $PW^s_a$ can be equivalently described as
$$
PW^s_a=\left\{f\in E^{s,2} : \supp\widehat f\subseteq [-a,a]\right\},
$$
thus, it is clear that the spaces $PW^s_a$ are closed subspaces of the
Hilbert spaces $E^{s,2}$. Therefore, we can consider the Hilbert space
projection operator $\bP_s:E^{s,2}\to PW^s_a$. 

In the case $0<s<\frac 12$ we get from Theorem \ref{main-1} that the
projection operator $\bP_s$ is explicitly given by the formula 
\begin{equation}\label{proj-1}
\bP_sf(x)=\frac{1}{\sqrt{2\pi}}\int_{\bbR} \widehat
f(\xi)\chi_{[-a,a]}(\xi) e^{ix\xi}\, d\xi \,.
\end{equation}
Similarly, we explicitly deduce $\bP_s$ in the case $s>\frac12$ from Theorem \ref{main-2}, 
\begin{equation}\label{proj-2}
\bP_sf(x)=\frac 1{2\pi}\int_\bbR \widehat f(\xi)\chi_{[-a,a]}(\xi) 
\big(e^{ix\xi}-P_m(ix\xi) \big)\, d\xi \,.
\end{equation}

A very natural question is to investigate whether the operator $\bP_s$
densely defined on $E^{s,p}\cap E^{s,2}$ extends to a bounded operator
$\bP_s: E^{s,p}\to\Bsp$ assuming that $s-\frac12\notin\bbN_0, s-\frac1p\notin\bbN_0$ and $\lfloor s-\frac12\rfloor=\lfloor s-\frac1p\rfloor$. This is the content of Theorem \ref{main-5}
which we now prove.

\proof[Proof of Theorem \ref{main-5}]
We first assume $0<s<\frac12$. Let $f$ be a function in $E^{s,p}\cap
E^{s,2}$. By definition of $E^{s,p}\cap E^{s,2}$, we can assume $f$ to
be in the Schwartz space $\cS$.  
Then, the projection $\bP_sf$ is given by \eqref{proj-1}. The function
$\bP_sf$ clearly extends to an entire function of exponential type
$a$, which we still denote by $\bP_sf$. Moreover, we assumed
$f\in\cS$, so that, for instance, $\bP_sf$ is a well-defined $L^2$
function with a well-defined Fourier transform. Thus, 
$$
\Delta^{\frac
  s2}\bP_sf(x)=\cF^{-1}\big(|\cdot|^{s}\chi_{[-a,a]}\widehat{f}\,\big)
=\cF^{-1}\big(\chi_{[-a,a]}
\widehat{\Delta^{\frac
    s2}f}\big). 
$$
Hence
$$
\|\bP_sf\|_{\cB^{s,p}_a} =\|\Delta^{\frac s2}\bP_sf\|_{L^p(\bbR)}
\leq C \|\Delta^{\frac s2}f\|_{L^p(\bbR)} = C\|f\|_{E^{s,p}},
$$
where the inequality holds since $\chi_{[-a,a]}$ is an $L^p$-Fourier
multiplier for any $1<p<+\infty$.  Therefore, $\bP_s$ extends to a
bounded operator $\bP_s: E^{s,p}\to\Bsp$ when $0<s<\frac 12$. 
%

Assume now $s>\frac 12$. Then, given $f\in E^{s,p}\cap
E^{s,2}\cap\cS$, the projection $\bP_sf$ is given by \eqref{proj-2},
that is, 
\begin{align*}
\bP_sf(x)&=\frac 1{2\pi}\int_\bbR \widehat f(\xi)\chi_{[-a,a]}(\xi) (e^{ix\xi}-P_m(ix\xi))\, d\xi\\
&=\frac 1{2\pi}\int_\bbR \widehat f(\xi)\chi_{[-a,a]}(\xi)e^{ix\xi}\,
d\xi-\frac 1{2\pi}\int_\bbR \widehat f(\xi)\chi_{[-a,a]}(\xi)
P_m(ix\xi)\, d\xi\\ 
&=: (\bP_sf)^1(x)+(\bP_sf)^2(x).
\end{align*}
As before, $(\bP_sf)^1$ is a well-defined $L^2$ function with a
well-defined Fourier transform, whereas $(\bP_sf)^2$ is a polynomial
of degree $m=\lfloor s-1/2\rfloor <s$, thus its fractional Laplacian $\Delta^{\frac
  s2}$ is zero. Therefore,  
$$
\Delta^{\frac s2}\bP_sf(x)=\frac{1}{\sqrt{2\pi}}\int_{\bbR}|\xi|^{s}\widehat
f(\xi)\chi_{[-a,a]}(\xi) e^{ix\xi}\, d\xi=
\cF^{-1}\big(\chi_{[-a,a]}\widehat {\Delta^{\frac s2}f}\big)(x). 
$$
Once again we have
$$
\|\bP_sf\|_{\cB^{s,p}_a}=\|\Delta^{\frac s2}\bP_sf\|_{L^p(\bbR)}\leq C
\|\Delta^{\frac s2}f\|_{L^p(\bbR)} = C\|f\|_{E^{s,p}}, 
$$
since $\chi_{[-a,a]}$ is a $L^p$-Fourier multiplier for any
$1<p<\infty$.
\epf

 \section{Proof of Lemma  \ref{density-bernstein}} \label{density-sect}

\proof[Proof of Lemma  \ref{density-bernstein}]

We recall that, given a
function $\vp$ on $\bbR$, for $t>0$ we set $\vp_t=\frac1t 
\vp(\cdot/t)$.  We also set $\vp^t =\vp(t\cdot)$ and observe that
$\cF(\vp_t) = (\cF\vp)^t$.  Moreover, it is easy to see that for all
$p\in[1,\infty)$,
$\vp_r, \vp^r \to \vp$ in $L^p$, as $r\to 1$.

We first claim that the subspace $\bigcup_{\delta>0}\{ f \in \cB^p_{a-\delta}:\, f_0\in\cS\}$ is
dense in $\cB^p_a$, $1<p<\infty$.  
Let $f \in \cB^p_a$ be given. Then $\supp \widehat{f_0}
\subseteq[-a,a]$ and if $0<r<1$, $f^r \in \cB^p_{ar}$ so that
$\supp \widehat{f_0^r} \subseteq[-ar,ar]$.  Let $\delta=(1-r)a/2$ and let
$\vp\in C^\infty_c[-1,1]$, $\vp=1$ on $[-\frac14,\frac14]$, $\int
\vp=1$. Then $\widehat{f_0^r}*\vp_\delta \in C^\infty_c$ and
$\supp \widehat{f_0^r}*\vp_\delta \subseteq [-a+\delta,a-\delta]$.  
Therefore, $\cF^{-1} \big(\widehat{f_0^r}*\vp_\delta\big)\in\cS$ extends to
a function $f_{(\delta)} \in
\cB^p_{a-\delta}$ and  $f_{(\delta)} \to f$ in $\cB^p_a$ as
$\delta\to0$.   This proves the claim.

Next, let $f\in \cB^p_{a-\delta}$ be such $f_0\in\cS$, for
$\delta>0$.  Let $\eta_{(\delta)}\in C^\infty_c[-\delta,\delta]$, $\eta=1$ on
$[-\delta/2,\delta/2]$. Then $(1-\eta_{(\delta)})\widehat{f_0} \in C^\infty_c$
and has support in $\{\xi:\, \delta/2\le|\xi|\le a\}$.  Therefore,
$\cF^{-1} \big( (1-\eta_{(\delta)}) \widehat{f_0} \big) \in\cS_\infty$ and
extends to a function in $ \cB^p_a$.  Thus, it suffices to show that
$\| \cF^{-1} \big( \eta_{(\delta)}\widehat{f_0} \big) \|_{L^p}\to 0$
as $\delta\to0$.  This fact follows by observing that we may choose
$$
\eta_{(\delta)}  = (\chi*\vp)^{1/\delta}
$$
where $\vp\in C^\infty_c[-\frac12,\frac12$ with $\int \vp=1$.  Then,
it is clear that 
$\eta_{(\delta)} \in  C^\infty_c[-\delta,\delta]$, and $\eta=1$ on
$[-\delta/2,\delta/2]$. Finally, for $q \in(1,\infty)$, it is easy to
see that
$$
\| \cF^{-1} \eta_{(\delta)} \|_{L^q} = \delta^{1-1/q}  \|
\widehat{\chi}\widehat{\vp}\|_{L^q} \to 0 
$$
as $\delta\to0$.
\qed

\begin{cor}\label{density-PWs-1}
  Let $s>0$, $p\in(1,\infty)$, $s-\frac1p\notin\bbN_0$, and set
  $m=\lfloor s-\frac1p\rfloor$. For $s>\frac1p$, 
  set
  $\cT_m = \big\{f\in\cE_a:f_0\in\cS_\infty, P_{f;m;0}=0 \big\}$.
 Then,   if $0<s<\frac1p$
the subspace $\cT$ is dense in $\cB_a^{s,p}$, whereas if $s>\frac1p$
the subspace $ \cT_m$ is
dense in $\cB_a^{s,p}$ if $s>\frac1p$, $s-\frac1p\notin\bbN_0$. 
\end{cor}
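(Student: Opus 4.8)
\emph{Strategy.} The plan is to transfer the density of $\cT$ in $\cB^p_a$ (Lemma~\ref{density-bernstein}) through the surjective isometry $\Delta^{\frac s2}\colon\cB^{s,p}_a\to\cB^p_a$ of Theorem~\ref{banach-isometry}, keeping track of the side conditions $f_0\in\cS_\infty$ and $P_{f;m;0}=0$. I first treat the case $0<s<\frac1p$, where $m=-1$ and no Taylor condition is imposed. One checks that $\cT\subseteq\cB^{s,p}_a$ (since $\cS_\infty\subseteq L^{p^*}$ and, by \eqref{Delta-s-S-infty}, $\Delta^{\frac s2}(\cS_\infty)\subseteq\cS_\infty\subseteq L^p$), and that $\Delta^{\frac s2}$ restricts to a bijection of $\cT$ onto itself: if $f\in\cT$ then $\Delta^{\frac s2}f_0=\cF^{-1}(|\xi|^s\widehat{f_0})\in\cS_\infty$ is again supported in $[-a,a]$, hence the restriction to $\bbR$ of a function in $\cE_a$, and $\cI_s$ provides the inverse (again by \eqref{Delta-s-S-infty}). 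Since $(\Delta^{\frac s2})^{-1}\colon\cB^p_a\to\cB^{s,p}_a$ is a homeomorphism carrying the dense subspace $\cT$ onto $\cT$, the subspace $\cT$ is dense in $\cB^{s,p}_a$.

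Now let $s>\frac1p$ with $s-\frac1p\notin\bbN$ and $m=\lfloor s-\frac1p\rfloor\ge0$. Set
$$
\cT'=\Big\{g\in\cE_a:\ g_0\in\cS_\infty\ \text{and}\ \int_{-a}^a\xi^j|\xi|^{-s}\widehat{g_0}(\xi)\,d\xi=0\ \text{for}\ j=0,\dots,m\Big\}\subseteq\cT,
$$
the integrals converging absolutely because $\widehat{g_0}\in C^\infty_c([-a,a])$ vanishes to infinite order at the origin. Using the inversion formula \eqref{Delta-s-mezzi-inverse} one computes that, for $g\in\cT$, the restriction of $(\Delta^{\frac s2})^{-1}g$ equals $\cF^{-1}(|\xi|^{-s}\widehat{g_0})$ minus its Taylor polynomial of order $m$ at $0$ (recall $f_0^{(j)}(0)=\frac{i^j}{\sqrt{2\pi}}\int_{-a}^a\xi^j\widehat{f_0}(\xi)\,d\xi$), and that polynomial vanishes exactly when $g\in\cT'$. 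Combining this with \eqref{Delta-s-S-infty} and the fact that Schwartz functions lie in $\dot{\Lambda}^{s-\frac1p}$, one gets $\cT_m\subseteq\cB^{s,p}_a$ and that $\Delta^{\frac s2}$ restricts to a bijection of $\cT_m$ onto $\cT'$ (injectivity being inherited from $\cB^{s,p}_a$). Hence, through the homeomorphism $(\Delta^{\frac s2})^{-1}$, it suffices to prove that $\cT'$ is dense in $\cB^p_a$.

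The subspace $\cT'$ is the common kernel in $\cT$ of the $m+1$ linear functionals $c_j(g)=\int_{-a}^a\xi^j|\xi|^{-s}\widehat{g_0}(\xi)\,d\xi$. Since $\cT$ is dense in $\cB^p_a$, a Hahn--Banach argument reduces the density of $\cT'$ in $\cB^p_a$ to the statement that no nonzero linear combination $c_Q(g)=\int_{-a}^aQ(\xi)|\xi|^{-s}\widehat{g_0}(\xi)\,d\xi$, $Q\in\cP_m\setminus\{0\}$, extends to a bounded functional on $(\cB^p_a,\|\cdot\|_{L^p})$. If some $c_Q$ were bounded, it would be represented by a function $\Psi\in L^{p'}$; testing against those $g\in\cT$ with $\widehat{g_0}$ ranging over all of $C^\infty_c((-a,a))$ flat at $0$ forces $\widehat\Psi$ to coincide with $Q(\xi)|\xi|^{-s}$ on $(-a,a)\setminus\{0\}$. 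Choosing an even $\beta\in C^\infty_c((-a,a))$ equal to $1$ near $0$ and convolving $\Psi$ with $\cF^{-1}\beta\in\cS$, Young's inequality would produce a function in $L^{p'}$ whose Fourier transform is $\beta(\xi)Q(\xi)|\xi|^{-s}$ modulo a distribution supported at $0$; but if $d$ is the order of vanishing of $Q$ at $0$ then $d\le m<s-\frac1p$, so $s-d-1>-\frac1{p'}$, whence this function behaves like $|x|^{s-d-1}$ (possibly times $\log|x|$) at infinity and cannot lie in $L^{p'}$ even after subtracting a polynomial --- a contradiction. Thus every $c_Q$ is unbounded, $\cT'$ is dense in $\cB^p_a$, and the corollary follows.

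The only genuine obstacle is the last step, establishing that the functionals $c_Q$ are unbounded on $(\cB^p_a,\|\cdot\|_{L^p})$; everything else is bookkeeping around the isometry of Theorem~\ref{banach-isometry} and the mapping properties \eqref{Delta-s-S-infty} of $\Delta^{\frac s2}$ and $\cI_s$ on $\cS_\infty$.
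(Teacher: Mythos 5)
Your proof is correct, and for the harder case $s>\frac1p$ it takes a genuinely different route from the paper's. The paper, after noting that $(\Delta^{\frac s2})^{-1}(\cT)$ is dense in $\cB^{s,p}_a$, claims this image is already contained in $\cT_m$: it observes $P_{f;m;0}=0$ and $f_0^{(m+1)}=\cF^{-1}\big((i\xi)^{m+1}|\xi|^{-s}\widehat{h_0}\big)\in\cS_\infty$, and then asserts that ``this easily implies $f_0\in\cS_\infty$.'' But, as your computation makes visible, $f_0=G-P_{G;m;0}$ with $G=\cF^{-1}(|\xi|^{-s}\widehat{h_0})\in\cS_\infty$, so $f_0$ lies in $\cS$ (hence in $\cS_\infty$) only when $P_{G;m;0}=0$, i.e.\ only when the moments $\int_{-a}^a\xi^j|\xi|^{-s}\widehat{h_0}(\xi)\,d\xi$, $j\le m$, all vanish --- that is, only when $h$ lies in the proper subspace $\cT'\subsetneq\cT$. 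You therefore do something the paper does not: you identify $\cT'$ as the preimage of $\cT_m$ under $(\Delta^{\frac s2})^{-1}$ and prove separately that $\cT'$ is still dense in $\cB^p_a$, via Hahn--Banach together with the unboundedness on $(\cB^p_a,\|\cdot\|_{L^p})$ of every nonzero moment functional $c_Q$. That extra density step is precisely what the paper's terse final sentence omits, and your version is the complete one. Two small remarks: the linear-algebra step (a functional vanishing on $\bigcap_{j\le m}\ker c_j$ lies in $\mathrm{span}\{c_j\}$) should be stated since it is what makes Hahn--Banach give the reduction; and the unboundedness of $c_Q$ can be obtained more directly than by analyzing the Fourier tail of $\beta Q|\xi|^{-s}$ --- take $g_n\in\cT$ with $\widehat{(g_n)_0}$ a fixed bump rescaled to width $\eps_n\sim\xi_n$ and centered at $\xi_n\to 0^+$; then $|c_Q(g_n)|/\|g_n\|_{\cB^p_a}\gtrsim \xi_n^{\,d-s+\frac1p}\to\infty$, where $d\le m<s-\frac1p$ is the order of vanishing of $Q$ at the origin, which avoids the regularization and log-correction issues in your last step.
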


\proof
We only prove the case $s>\frac1p$, $s-\frac1p\notin\bbN_0$, the other
case being easier. By Lemma \ref{density-bernstein} and Theorem
\ref{banach-isometry} we 
have that
$(\Delta^{\frac s2})^{-1}\big( \cT\big)$ is dense in $\cB_a^{s,p}$.
Thus, it suffices to show that this latter space is contained in
$\cT_m$.  Let $h\in\cT$ and let $f=\Delta^{-\frac s2} h$ be given by
\eqref{Delta-s-mezzi-inverse}.  It is clear that $P_{f;m;0}=0$.  Moreover,
 $f^{(m+1)}_0 = \cF^{-1}\big( (i\xi)^{m+1} |\xi|^{-s}
\widehat{h_0}\big) \in \cS_\infty$ and extends to a function in
$\cB^{s,p}_a$, by Theorem \ref{main-4}.  This easily implies that $f_0
\in\cS_\infty$ and the conclusion follows.
\qed

\section{Final remarks and open questions}\label{Final-Sect}
We believe that the fractional spaces we introduced are worth
investigating and, as we mentioned, they arise naturally in a several
variable setting (\cite{MPS}). 

We mention a few questions that remain open.  First of all, it
is certainly of interest to consider the cases $s-\frac1p \in\bbN_0$.
As we pointed out already, these cases correspond to the critical
cases in the Sobolev embedding theorem.  As shown by
Bourdaud, when $s-\frac1p \not\in\bbN_0$,
the realization spaces $E^{s,p}$ of $\dot{W}^{s,p}$ are the
unique realization spaces whose norms are homogeneous with respect the
natural dilations.  On the other hand, when $s-\frac1p \in\bbN_0$
there exists no realization space of  $\dot{W}^{s,p}$ whose norm is
homogeneous.  In these case, it would be natural to define the
realization space as the interpolating space between two spaces with  
$s-\frac1p \not\in\bbN_0$. Thus, a natural definition may be
\begin{equation*}
\Bsp=\big\{f\in \cE_{a}:\,  [f_0]_m\in \dot{W}^{s,p} \text{ and if } m\ge
1/p\,, P_{f_0;m;0}=0  \big\}\,,
\end{equation*}
where $[f_0]_m$ denotes the equivalence class of $f_0$ in
$\cS'/\cP_m$.  In any event, these spaces remain to be investigated.
Naturally, another question that remains open is the boundedness of
the orthogonal projection $\bP:E^{s,p}\to \cB^{s,p}_a$ in the cases  $s-\frac1p \in\bbN_0$.
 Such boundedness would allow one to explicitly describe the dual
 space of $\cB^{s,p}_a$, for the whole scale $s>0$ and
 $p\in(1,\infty)$.

 The Paley--Wiener space is a very special instance of a de Branges
 spaces. These spaces where introduced by de Branges also in
 connection with the analysis of
 the canonical systems, see e.g. \cite{dB,Romanov}. 
 It would be interesting to determine whether the fractional
 Paley--Wiener spaces $PW^s_a$ also arise to the solution of a
 canonical system defined in terms of the fractional derivative.

In \cite{BBH} it is shown that the Paley--Wiener space, and, more
generally, any de Branges space, coincides as set with a Fock-type
space with non-radial weight. The Paley--Wiener (or de Branges) norm
given by an integral on the real line is replaced by an equivalent
weighted integral on the complex plane. We wonder if an analogous
result holds true for the fractional Paley--Wiener spaces. 
 
Another important fact about the classical Paley--Wiener space is
that, up to a multiplication by an inner function, it admits a
representation as a \emph{model space} of $H^2(\bbC_+)$, the Hardy
space of the upper half-plane. We recall that a model subspace of
$H^2(\bbC_+)$ is defined as $K_{\Theta}=H^2(\bbC_+)\ominus \Theta
H^2(\bbC_+)$ where $\Theta$ is an inner function in $\bbC_+$. 
It would certainly be interesting to investigate the analogous spaces
appearing in the case of the
fractional
Paley--Wiener and Bernstein spaces.

\bibliography{PW-bib-9-2}
\bibliographystyle{amsalpha}
\vspace{-.1cm}
\end{document}